\theoremstyle{definition}
\theoremstyle{definition}
\theoremstyle{definition}
\tikzstyle arrowstyle=[scale=1]
\tikzstyle directed=[postaction={decorate,decoration={markings,
    mark=at position .65 with {\arrow[arrowstyle]{stealth}}}}]
\tikzstyle reverse directed=[postaction={decorate,decoration={markings,
    mark=at position .65 with {\arrowreversed[arrowstyle]{stealth};}}}]
\newtheorem{defn}{Definition}[section]
\newtheorem{lemm}{Lemma}[section]
\newtheorem{thrm}{Theorem}[section]
\newtheorem{rem}{Remark}[section]
\begin{document}

\begin{frontmatter}

\title{Multiphysics mixed finite element method with Nitsche's technique for Stokes-poroelasticity problem\tnoteref{1}}\tnotetext[1]{The work was supported by the National Natural Science Foundation of China under grant No. 11971150.\\
*Corresponding author. Email: zhihaoge@henu.edu.cn.}
\author{Zhihao G${\rm e^{1,*}}$,\ Jin'ge Pan${\rm g^{1}}$,\ Jiwei Ca${\rm o^{2}}$}
\address{$ ^1$School of Mathematics and Statistics, Henan University, Kaifeng 475004, P.R. China\\
$ ^2$College of Mathematics and Information Science, Henan University of Economics and Law, Zhengzhou 450000, P.R. China}

\begin{abstract}
In this paper, we propose a  multiphysics mixed finite element method with Nitsche's technique for Stokes-poroelasticity problem. Firstly, we present a multiphysics reformulation of poroelasticity part of the original problem by introducing two pseudo-pressures to reveal the underlying deformation and diffusion multi physical processes in the Stokes-poroelasticity problem. Then, we prove the existence and uniqueness of weak solution of the reformulated and original problem. And we use Nitsche's technique to approximate the coupling condition at the interface to propose a loosely-coupled time-stepping method-- multiphysics mixed finite element method for space variables, and we decouple the reformulated  problem into three sub-problems at each time step--a Stokes problem, a generalized Stokes problem and a mixed diffusion problem. Also,  we give the stability analysis and error estimates of the loosely-coupled time-stepping method. Finally, we show the numerical tests to verify the  theoretical results, which has a good stability and no ¡°locking" phenomenon. 
\end{abstract}
\begin{keyword}
Stokes-poroelasticity problem, Nitsche's technique, multiphysics mixed finite element method, time-stepping method, "locking" phenomenon.
\end{keyword}
\end{frontmatter}

\thispagestyle{empty}

\numberwithin{equation}{section}

\section{Introduction}\label{sec1}

Stoke-poroelasticity model is an interaction of a free fluid and  poroelastic, which is  widely applied in reservoir engineering \cite{M.B.Dusseault2001,S.Michael2002,Y.Wang2003}, the groundwater flow \cite{J.Kim,A.Mikelic,R.I.Borja}, the poroelastic structure and crack \cite{T.Wick,M.Lesinigo,M.F.Wheeler}, in biomechanics \cite{M.Prosi,G.A.Holzapfel,S.M.K. Rausch}, and so on. For the the Stokes-poroelasticity problem, there are few theoretical results. As for the numerical results, a domain decomposition (DD) method is proposed in \cite{J.M.Connors,W.J.Layton}. The monolithic multigrid method have been successfully applied for the system of Stokes equations in \cite{S.Vanka} and poroelasticity equations in \cite{F.Gaspar}, the Stokes-poroelasticity problem in \cite{I.Ambartsumyan,S.Muntz}.  The authors of \cite{S.Badia} adopt an extended domain decomposition method for the fluid-poroelastic structure interaction (FPSI) problem. In \cite{E.Burman}, a loosely coupled scheme using a Nitsche type weak coupling together with an interface pressure stabilization in time is proposed. However, the accuracy is relatively low and the computational efficiency of the scheme is reduced by using iterative correction. Recently, the multiphysics finite element method is proposed by  \cite{X.B.Feng2014} for the poroelasticity problem to reveals the underlying deformation and diffusion multi physical processes and overcome the ¡°locking phenomenon¡±.

In this paper, we consider the flow of a viscous fluid in a channel bounded by a poroelastic medium, which is described as follows:
\begin{eqnarray}
	-2\mu_{f}\nabla\cdot\mathbf{D}(\mathbf{v})+\nabla p_{f}&=&\mathbf{f},~~~~~~(\mathbf{x},t)\in\Omega_{f}\times(0,T], \label{1.1'}\\
	\nabla\cdot\mathbf{v}&=&g,~~~~~~(\mathbf{x},t)\in\Omega_{f}\times(0,T],\label{1.2'} \\
	-2\mu_{p}\nabla\cdot\mathbf{D}(\mathbf{U})-\lambda_{p}\nabla(\nabla\cdot\mathbf{U})+\alpha\nabla p_{p} &=&\mathbf{h},~~~~~~(\mathbf{x},t)\in\Omega_{p}\times(0,T], \label{1.5'}\\
	-k\nabla p_{p}&=&\mathbf{q},~~~~~~(\mathbf{x},t)\in\Omega_{p}\times(0,T],\label{1.6'} \\
	(s_{0}p_{p}+\alpha\nabla\cdot\mathbf{U})_{t}+\nabla\cdot\mathbf{q}&=&s,~~~~~~~(\mathbf{x},t)\in\Omega_{p}\times(0,T], \label{1.7'}
\end{eqnarray}
where $\Omega_{f}$ is a Stokes flow domain, and $\Omega_{p}$ is a poroelastic material. $v$ is the fluid velocity, $\sigma_{f}(\mathbf{v},p_{f})=-p_{f}\mathbf{I}+2\mu_{f}\mathbf{D}(\mathbf{v})$ is the fluid stress tensor,~$p_{f}$~ is the fluid pressure,~$\mu_{f}$~is the fluid viscosity and~$\mathbf{D}(\mathbf{v})=\frac{1}{2}(\nabla\mathbf{v}+(\nabla\mathbf{v})^{T})$~ is the rate-of-strain tensor.~$\mathbf{h}$, $s$, $\mathbf{f}$~and~$g$~are the body force. The stress tensor of the poroelastic medium is  $\sigma_{p}=\sigma^{E}-\alpha p_{p}\mathbf{I}$, where~$\sigma^{E}=\lambda_{p}(\nabla\cdot\mathbf{U})\mathbf{I}+2\mu_{p}\mathbf{D}(\mathbf{U})$. $\lambda_{p}$~and~$\mu_{p}$~denote the Lam\'{e} coefficients for the skeleton, and   $\mathbf{D}(\mathbf{U})=\frac{1}{2}(\nabla\mathbf{U}+(\nabla\mathbf{U})^{T})$. The equation of (\ref{1.6'}) is called by  Darcy's law, where the flux~$\mathbf{q}$~is the relative velocity of the fluid within the porous structure and~$p_{p}$~is the fluid pressure. The hydraulic conductivity is denoted by~$k$, which is in general a symmetric positive definite tensor. The coefficient~$s_{0}\in(0, 1)$~is the storage coefficient, and the Biot¨CWillis constant~$\alpha$~is the pressure¨Cstorage coupling coefficient. The fluid domain is bounded by a deformable porous matrix consisting of a skeleton and connecting pores filled with fluid, the following two configurations are considered: (i) the channel extends to the external boundary (Figure \ref{fig11}), and (ii) the channel is surrounded by the poroelastic media (Figure \ref{fig22}).
\begin{figure}[H]
	\centering
	\begin{minipage}[t]{0.49\linewidth}
		\centering
		\includegraphics[height=10cm,width=10cm]{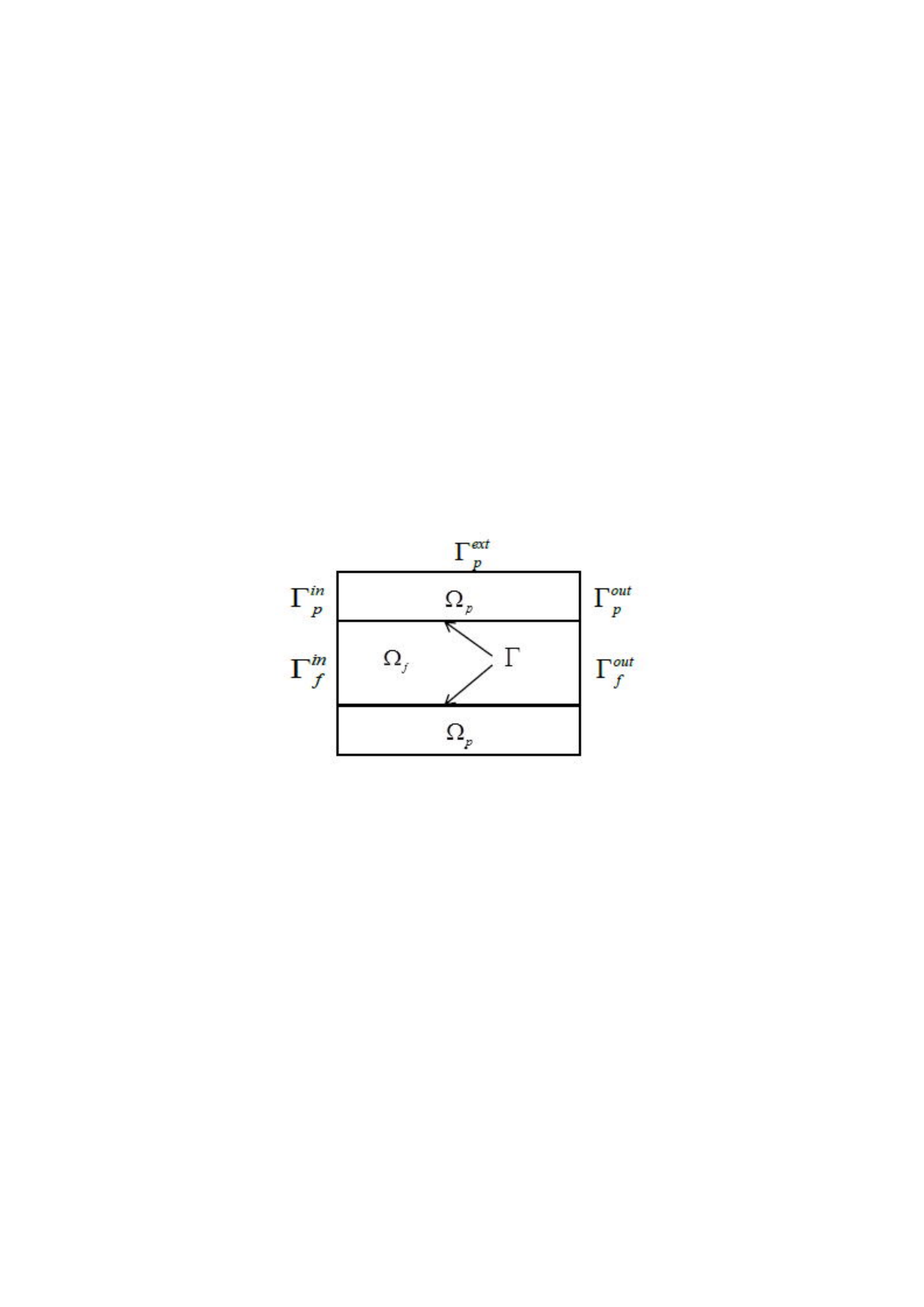}
		\caption{\small channel extends to external boundary}\label{fig11}
	\end{minipage}
	\begin{minipage}[t]{0.49\linewidth}
		\centering
		\includegraphics[height=10cm,width=10cm]{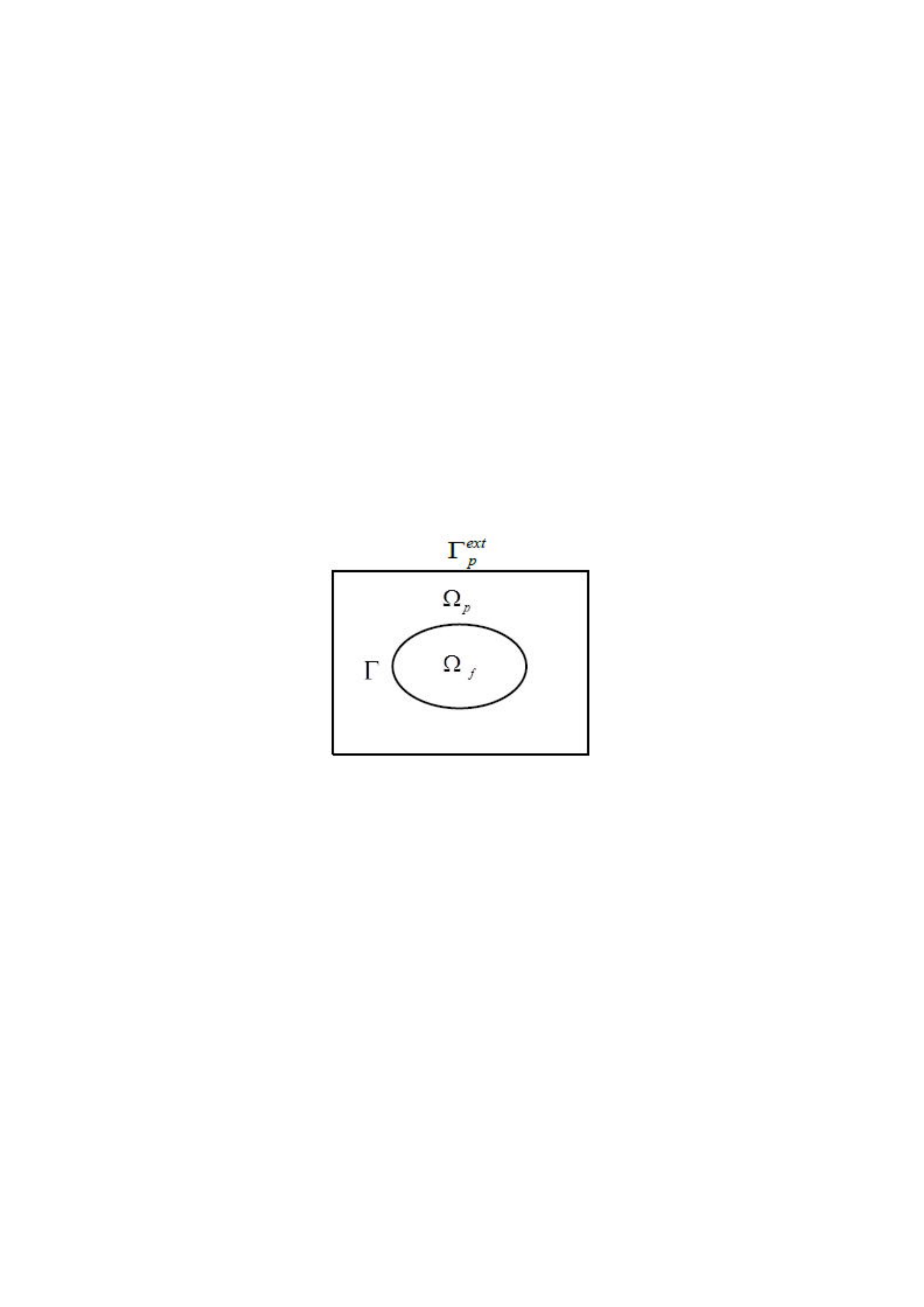}
		\caption{\small channel is surrounded by poroelastic media}\label{fig22}
	\end{minipage}
\end{figure}

Configuration (i) is suitable FPSI in arteries, and the configuration (ii) is often applied to the fractured reservoirs. As for configuration (i),  we prescribe the following boundary and initial conditions:
\begin{eqnarray}
	\sigma_{f}\mathbf{n}_{f}=-p_{in}(t)\mathbf{n}_{f},&&(\mathbf{x},t)\in\Gamma_{f}^{in}\times(0,T],\label{1.3} \\
	\sigma_{f}\mathbf{n}_{f}=0,&&(\mathbf{x},t)\in\Gamma_{f}^{out}\times(0,T], \label{1.4}\\
	\mathbf{U}=0,&&(\mathbf{x},t)\in\Gamma_{p}^{in}\cup\Gamma_{p}^{out}\times(0,T],\label{1.8}\\
	\mathbf{n}_{p}\cdot\sigma^{E}\mathbf{n}_{p}=0,&&(\mathbf{x},t)\in\Gamma_{p}^{ext}\times(0,T],\label{1.9}\\
	\mathbf{U}\cdot\tau_{p}=0,&&(\mathbf{x},t)\in\Gamma_{p}^{ext}\times(0,T],\label{1.10}\\
	p_{p}=0,~~~~~(\mathbf{x},t)\in\Gamma_{p}^{ext}\times(0,T],~~\mathbf{q}\cdot\mathbf{n}_{p}=0,&&(\mathbf{x},t)\in\Gamma_{p}^{in}\cup\Gamma_{p}^{out}\times(0,T],\label{1.11}\\
	\mathbf{v}^{0}=0,&&\mathbf{U}^{0}=0,~~~~~p_{p}^{0}=0.\label{1.12}
\end{eqnarray}


As for the configuration (ii), we can suppose that $\mathbf{U}=0$~and~$\mathbf{q}\cdot\mathbf{n}_{p}=0$~ on~$\partial\Omega_{p}:=\Gamma_{p}^{ext}$.

Next, we give the interface conditions on $\Gamma$. To do that, we denote  the outward normal to the fluid domain by $\mathbf{n}$ and the tangential unit vector on the interface ~$\Gamma$~by $\tau$. The interface conditions are given by
\begin{eqnarray}
	(\mathbf{v}-\frac{\partial\mathbf{U}}{\partial t})\cdot\mathbf{n}=\mathbf{q}\cdot\mathbf{n},&&(\mathbf{x}.t)\in\Gamma\times(0,T],\label{1.13}\\
	\mathbf{v}\cdot\tau=\frac{\partial\mathbf{U}}{\partial t}\cdot\tau,&&(\mathbf{x},t)\in\Gamma\times(0,T],\label{1.14}\\
	-\mathbf{n}\cdot\sigma_{f}\mathbf{n}=p_{p},&&(\mathbf{x},t)\in\Gamma\times(0,T],\label{1.16}\\
	\mathbf{n}\cdot\sigma_{f}\mathbf{n}-\mathbf{n}\cdot\sigma_{p}\mathbf{n}=0,&&(\mathbf{x},t)\in\Gamma\times(0,T], \label{1.17}\\
	\tau\cdot\sigma_{f}\mathbf{n}-\tau\cdot\sigma_{p}\mathbf{n}=0,&&(\mathbf{x},t)\in\Gamma\times(0,T].\label{1.18}
\end{eqnarray}
%
The condition (\ref{1.14}) can be replaced by  Beavers¨CJoseph¨CSaffman condition (cf. \cite{F.Schieweck}):
\begin{equation}\label{1.15}
	\beta(\mathbf{v}-\frac{\partial\mathbf{U}}{\partial t})\cdot\tau=-\tau\cdot\sigma_{f}\mathbf{n},~~~~~(\mathbf{x},t)\in\Gamma\times(0,T],
\end{equation}
where the parameter $\beta$ quantifies the resistance that the porous matrix opposes to fluid flow in the tangential direction.

As for the problem (\ref{1.1'})-(\ref{1.18}), we use the multiphysics mixed finite element method with Nitsche's technique to solve the Stokes-poroelasticity problem. Firstly, we define the weak solution of the reformulated problem and prove the existence and uniqueness of weak solution. Then, we use Nitsche's technique to deal with  the interface condition to propose a fully discrete multiphysics finite element method, which decouples the problem into three sub-problems at each time step: a Stokes problem, a generalized Stokes problem and a mixed diffusion problems. And we give the stability analysis and error estimates of the decoupled finite element method. Finally, we give the numerical tests to verify the effectiveness and feasibility of our method. To the best of our knowledge, it is first time to use the multiphysics finite element method with Nitsche's technique to solve the Stokes-poroelasticity problem. 

The remainder of this paper is organized as follows. In Section \ref{sec2}, we reformulate the original problem based on multiphysics approach and prove the existence and uniqueness of weak solution of the reformulated problem. In Section \ref{sec3}, we use Nitsche's technique to deal with  the interface condition and propose a fully discrete multiphysics mixed finite element method to decouple the problem into three sub-problems at each time step. And we give the stability analysis and optimal error estimates of the proposed numerical method. In Section \ref{sec4}, we show the numerical tests to verify the  theoretical results, which has a good stability and no ¡°locking" phenomenon.  Finally, we draw a conclusion to summary the main results in this paper.

\section{Multiphysics approach and PDE analysis}\label{sec2}

To reveal the multi physics processes underlying in the original problem, we introduce a new variable $\phi:=\nabla\cdot\mathbf{U}$, and define the following two pseudo-pressures
\begin{equation*}
    \xi:=\alpha p_{p}-\lambda_{p}\phi,~~~~\eta:=s_{0}p_{p}+\alpha\phi.
\end{equation*}
It is easy to check that
\begin{equation}\label{1.19}
    p_{p}=k_{1}\xi+k_{2}\eta,~~~~\phi=k_{1}\eta-k_{3}\xi,
\end{equation}
where
\begin{equation}\label{1.20}
    k_{1}=\frac{\alpha}{\alpha^{2}+\lambda_{p}s_{0}},~~~~k_{2}=\frac{\lambda_{p}}{\alpha^{2}+\lambda_{p}s_{0}},~~~~k_{3}=\frac{s_{0}}{\alpha^{2}+\lambda_{p}s_{0}}.
\end{equation}

Then the problem (\ref{1.1'})-(\ref{1.7'}) can be reformulate into 
\begin{eqnarray}
  -2\mu_{f}\nabla\cdot\mathbf{D}(\mathbf{v})+\nabla p_{f}&=&\mathbf{f},~~~~~~~~~~~~~(\mathbf{x},t)\in\Omega_{f}\times(0,T], \label{1.21}\\
  \nabla\cdot\mathbf{v}&=&g,~~~~~~~~~~~~~(\mathbf{x},t)\in\Omega_{f}\times(0,T],\label{1.22} \\
  -2\mu_{p}\nabla\cdot\mathbf{D}(\mathbf{U})+\nabla \xi&=&\mathbf{h},~~~~~~~~~~~~~(\mathbf{x},t)\in\Omega_{p}\times(0,T], \label{1.23}\\
  k_{3}\xi+\nabla\cdot\mathbf{U}&=&k_{1}\eta,~~~~~~~~~~(\mathbf{x},t)\in\Omega_{p}\times(0,T],\label{1.24} \\
  -k\nabla(k_{1}\xi+k_{2}\eta)&=&\mathbf{q},~~~~~~~~~~~~~(\mathbf{x},t)\in\Omega_{p}\times(0,T],\label{1.25} \\
  \eta_{t}+\nabla\cdot\mathbf{q}&=&s,~~~~~~~~~~~~~~(\mathbf{x},t)\in\Omega_{p}\times(0,T]. \label{1.26}
\end{eqnarray}

\begin{rem}\label{rem200527-2}
	The coupled problem (\ref{1.21})-(\ref{1.26}) reveals the underlying deformation and diffusion multiphysics process, which occurs in the Stokes-poroelasticity problem (\ref{1.1'})-(\ref{1.7'}). It should be noted that the original pressure is eliminated in the reformulation, which will be helpful to overcome the "locking" phenomenon.
\end{rem}


To define a weak solution of the problem (\ref{1.21})-(\ref{1.26}), we introduce the following function spaces
\begin{eqnarray*}
  \mathbf{V}^{f} &=& \{\varphi_{f}\in H^{1}(\Omega_{f})^{d}:\varphi_{f}=0,\mathbf{x}\in\Gamma_{f}^{out}\}, \ \
  Q^{f} = \{\psi_{f}\in L^{2}(\Omega_{f})\}, \\
  \mathbf{X}^{p} &=& \{\varphi_{p}\in H^{1}(\Omega_{p})^{d}:\varphi_{p}=0,\mathbf{x}\in\Gamma_{p}^{in}\cup\Gamma_{p}^{out};\varphi_{p}\cdot\tau_{p}=0,\mathbf{x}\in\Gamma_{p}^{ext}\}, \\
  \mathbf{V}^{p} &=& \{\mathbf{r}\in H(\mathrm{div};\Omega_{p}):\mathbf{r}\cdot\mathbf{n}_{p}=0,\mathbf{x}\in\Gamma_{p}^{in}\cup\Gamma_{p}^{out}\}, \\
  M^{p}&=&\{w\in L^{2}(\Omega_{p})\},\ \
  Q^{p}=\{z\in H^{1}(\Omega_{p})\},
\end{eqnarray*}
with the norm  
$
  \|\mathbf{v}\|^{2}_{H(\mathrm{div};\Omega_{p})}=\|\mathbf{v}\|^{2}_{L^{2}(\Omega_{p})}+\|\nabla\cdot\mathbf{v}\|^{2}_{L^{2}(\Omega_{p})}$.

For convenience, we assume that the functions $\mathbf{f},~g,~\mathbf{h}$~and~$s$~all are independent of $t$ in the remaining of the paper. We note that all the results of this paper can be easily extended to the case of time-dependent source functions.
\begin{defn}\label{defn2.1}
Let~$\mathbf{v}^{0}\in H^{1}(\Omega_{f}),~\mathbf{U}^{0}\in H^{1}(\Omega_{p}),~p_{p}^{0}\in L^{2}(\Omega_{p}),~\mathbf{f},g\in L^{2}(\Omega_{f}),~\mathbf{h},s\in L^{2}(\Omega_{p})$,~for any~$t\in(0, T ]$, we say that  $\mathbf{v}\in L^{\infty}(0,T;\mathbf{V}^{f}), p_{f}\in L^{\infty}(0,T;Q^{f}), \mathbf{U}\in L^{\infty}(0,T;\mathbf{X}^{p}), \mathbf{q}\in L^{\infty}(0,T;\mathbf{V}^{p}), p_{p}\in L^{\infty}(0,T;Q^{p})$ are a weak solution of the problem (\ref{1.1'})-(\ref{1.7'}) if there hold
\begin{eqnarray}
  -\langle\sigma_{f}\mathbf{n}_{f},\varphi_{f}\rangle_{\Gamma}+(2\mu_{f}\mathbf{D}(\mathbf{v}),
   \mathbf{D}(\varphi_{f}))_{\Omega_{f}}
   -(p_{f},\nabla\cdot\varphi_{f})_{\Omega_{f}}+\langle p_{in}(t)\mathbf{n}_{f},\varphi_{f}\rangle_{\Gamma_{f}^{in}}
    &=&(\mathbf{f},\varphi_{f})_{\Omega_{f}},\label{2.38} \\
  (\nabla\cdot\mathbf{v},\psi_{f})_{\Omega_{f}}
   &=&(g,\psi_{f})_{\Omega_{f}},\label{2.39}\\
   -\langle\sigma_{p}\mathbf{n}_{p},\varphi_{p}\rangle_{\Gamma}
  +2\mu_{p}(\mathbf{D}(\mathbf{U}),\mathbf{D}(\varphi_{p}))_{\Omega_{p}}
  +\lambda_{p}(\nabla\cdot\mathbf{U},\nabla\cdot\varphi_{p})_{\Omega_{p}}
   -\alpha(p_{p},\nabla\cdot\varphi_{p})_{\Omega_{p}}&=&(\mathbf{h},\varphi_{p})_{\Omega_{p}}, \label{2.40}\\
  k^{-1}(\mathbf{q},\mathbf{r})_{\Omega_{p}}-(p_{p},\nabla\cdot \mathbf{r})_{\Omega_{p}}+\langle p_{p}\cdot\mathbf{n}_{p},\mathbf{r}\rangle_{\Gamma}&=&0,\label{2.41}\\
  ((s_{0}p_{p}+\alpha\nabla\cdot\mathbf{U})_{t},z)_{\Omega_{p}}+(\nabla\cdot \mathbf{q},z)_{\Omega_{p}}
  &=&(s,z)_{\Omega_{p}},\label{2.42}
\end{eqnarray}
for any $(\varphi_{f}, \psi_{f}, \varphi_{p}, \mathbf{r}, z)\in\mathbf{V}^{f}\times Q^{f}\times\mathbf{X}^{p}\times\mathbf{V}^{p}\times
  Q^{p}$.
\end{defn}

Similarly, we can define the weak solution to the problem (\ref{1.21})-(\ref{1.26}).

\begin{defn}\label{defn2.2}
Let~$\mathbf{v}^{0}\in H^{1}(\Omega_{f}),~\mathbf{U}^{0}\in H^{1}(\Omega_{p}),~p_{p}^{0}\in L^{2}(\Omega_{p}),~\mathbf{f},g\in L^{2}(\Omega_{f}),~\mathbf{h},s\in L^{2}(\Omega_{p})$ for any $t\in(0, T)$, we say that $\mathbf{v}\in L^{\infty}(0,T;\mathbf{V}^{f}), p_{f}\in L^{\infty}(0,T;Q^{f}), \mathbf{U}\in L^{\infty}(0,T;\mathbf{X}^{p}), \mathbf{q}\in L^{\infty}(0,T;\mathbf{V}^{p}), \xi\in L^{\infty}(0,T;M^{p}), \eta\in L^{\infty}(0,T;Q^{p})$ are a weak solution of the problem (\ref{1.21})-(\ref{1.26}) if there hold
\begin{eqnarray}
  -\langle\sigma_{f}\mathbf{n}_{f},\varphi_{f}\rangle_{\Gamma}+(2\mu_{f}\mathbf{D}(\mathbf{v}),
   \mathbf{D}(\varphi_{f}))_{\Omega_{f}}~~~~~~~~~~~~~~~&&\nonumber\\
   -(p_{f},\nabla\cdot\varphi_{f})_{\Omega_{f}}+\langle p_{in}(t)\mathbf{n}_{f},\varphi_{f}\rangle_{\Gamma_{f}^{in}}
    &=&(\mathbf{f},\varphi_{f})_{\Omega_{f}},\label{2.15} \\
  (\nabla\cdot\mathbf{v},\psi_{f})_{\Omega_{f}}
   &=&(g,\psi_{f})_{\Omega_{f}},\label{2.16}\\
   -\langle\sigma_{p}\mathbf{n}_{p},\varphi_{p}\rangle_{\Gamma}
  +2\mu_{p}(\mathbf{D}(\mathbf{U}),\mathbf{D}(\varphi_{p}))_{\Omega_{p}}-(\xi,\nabla\cdot\varphi_{p})_{\Omega_{p}}
   &=&(\mathbf{h},\varphi_{p})_{\Omega_{p}}, \label{2.17}\\
  k_{3}(\xi,w)_{\Omega_{p}}+(\nabla\cdot\mathbf{U},w) _{\Omega_{p}}&=& k_{1}(\eta,w)_{\Omega_{p}},\label{2.18}\\
  k^{-1}(\mathbf{q},\mathbf{r})_{\Omega_{p}}-(k_{1}\xi+k_{2}\eta,\nabla\cdot \mathbf{r})_{\Omega_{p}}+\langle(k_{1}\xi+k_{2}\eta)\cdot\mathbf{n}_{p},\mathbf{r}\rangle_{\Gamma}&=&0,\label{2.19}\\
  (\partial_{t}\eta,z)_{\Omega_{p}}+(\nabla\cdot \mathbf{q},z)_{\Omega_{p}}
  &=&(s,z)_{\Omega_{p}},\label{2.20}
\end{eqnarray}
for any $(\varphi_{f}, \psi_{f}, \varphi_{p}, \mathbf{r}, w, z)\in\mathbf{V}^{f}\times Q^{f}\times\mathbf{X}^{p}\times\mathbf{V}^{p}\times
 M^{p}\times  Q^{p}$.
\end{defn}

\begin{lemm}\label{lemn2}
Every weak solution~$(\mathbf{v},p_{f},\mathbf{U},\mathbf{q},\xi,\eta)$ of the problem (\ref{2.15})-(\ref{2.20}) satisfies the following energy law:
	\begin{equation}\label{2.23}
		\mathbf{J}(t)+\int_{0}^{t}[2\mu_{f}\|\mathbf{D}(\mathbf{v})\|^{2}_{L^{2}(\Omega_{f})}+k^{-1}\|\mathbf{q}\|^{2}_{L^{2}(\Omega_{p})}
		+\beta\|(\mathbf{v}-\mathbf{U}_{t})\cdot\tau\|^{2}_{L^{2}(\Gamma)}]dt-\int_{0}^{t}\mathbf{F}(t)dt=\mathbf{J}(0),
	\end{equation}
	for all~$t\in(0,T]$, where
	\begin{eqnarray}
		&\mathbf{J}(t)=\frac{1}{2}[2\mu_{p}\|\mathbf{D}(\mathbf{U}(t))\|^{2}_{L^{2}(\Omega_{p})}+k_{2}\|\eta(t)\|^{2}_{L^{2}(\Omega_{p})}+k_{3}\|\xi(t)\|^{2}_{L^{2}(\Omega_{p})}-2(\mathbf{h},\mathbf{U}(t))_{\Omega_{p}}],\label{2.24} \\
		&\mathbf{F}(t)=(\mathbf{f},\mathbf{v})_{\Omega_{f}}+\langle p_{in}(t)\mathbf{n}_{f},\mathbf{v}\rangle_{\Gamma_{f}^{in}}+(g,p_{f})_{\Omega_{f}}+(s,p_{p})_{\Omega_{p}}.\label{2.25}
	\end{eqnarray}
\end{lemm}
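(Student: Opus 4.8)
The plan is to test the weak formulation with the solution itself and sum, exploiting the interface conditions to cancel the boundary terms. First I would set $\varphi_{f}=\mathbf{v}$ in \eqref{2.15}, $\psi_{f}=p_{f}$ in \eqref{2.16}, $\varphi_{p}=\mathbf{U}_{t}$ in \eqref{2.17}, $w=-\xi_{t}$ in \eqref{2.18} (so that the time derivative lands on the $k_{3}$-term), $\mathbf{r}=\mathbf{q}$ in \eqref{2.19}, and $z=k_{1}\xi+k_{2}\eta=p_{p}$ in \eqref{2.20}. With these choices the symmetric volume forms become time derivatives of squared norms: $2\mu_{f}(\mathbf{D}(\mathbf{v}),\mathbf{D}(\mathbf{v}))_{\Omega_{f}}$ stays as dissipation, $2\mu_{p}(\mathbf{D}(\mathbf{U}),\mathbf{D}(\mathbf{U}_{t}))_{\Omega_{p}}=\tfrac{d}{dt}\mu_{p}\|\mathbf{D}(\mathbf{U})\|^{2}_{L^{2}(\Omega_{p})}$, $k^{-1}(\mathbf{q},\mathbf{q})_{\Omega_{p}}$ is dissipation, and the $\eta$-equation contributes $(\partial_{t}\eta,k_{2}\eta)_{\Omega_{p}}+(\partial_{t}\eta,k_{1}\xi)_{\Omega_{p}}=\tfrac{d}{dt}\tfrac{k_{2}}{2}\|\eta\|^{2}_{L^{2}(\Omega_{p})}+k_{1}(\partial_{t}\eta,\xi)_{\Omega_{p}}$. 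The cross terms $(\xi,\nabla\cdot\mathbf{U}_{t})_{\Omega_{p}}$, $-(\nabla\cdot\mathbf{U}_{t},\xi_{t})_{\Omega_{p}}$ wait — more carefully, the coupling term $-(\xi,\nabla\cdot\varphi_{p})_{\Omega_{p}}$ with $\varphi_{p}=\mathbf{U}_{t}$ gives $-(\xi,\nabla\cdot\mathbf{U}_{t})_{\Omega_{p}}$, which must be matched against the $(\nabla\cdot\mathbf{U},w)_{\Omega_{p}}$ term; differentiating \eqref{2.18} in time or instead choosing $w$ so the $(\nabla\cdot\mathbf{U})_{t}$ appears is the cleaner route. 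I would differentiate \eqref{2.18} in $t$, test with $w=\xi$, obtaining $k_{3}(\xi_{t},\xi)+((\nabla\cdot\mathbf{U})_{t},\xi)=k_{1}(\eta_{t},\xi)$, so that $((\nabla\cdot\mathbf{U})_{t},\xi)=k_{1}(\eta_{t},\xi)-\tfrac{d}{dt}\tfrac{k_{3}}{2}\|\xi\|^{2}$; this exactly supplies the $k_{3}\|\xi\|^2$ term in $\mathbf{J}$ and cancels the stray $k_{1}(\eta_{t},\xi)$ coming from the $\eta$-equation.

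Next I would handle the boundary terms on $\Gamma$. Adding \eqref{2.15}, \eqref{2.17}, \eqref{2.19} produces $-\langle\sigma_{f}\mathbf{n}_{f},\mathbf{v}\rangle_{\Gamma}-\langle\sigma_{p}\mathbf{n}_{p},\mathbf{U}_{t}\rangle_{\Gamma}+\langle p_{p}\mathbf{n}_{p},\mathbf{q}\rangle_{\Gamma}$ (using $p_{p}=k_1\xi+k_2\eta$ in \eqref{2.19}). Decomposing each stress vector into normal and tangential parts via $\sigma\mathbf{n}=(\mathbf{n}\cdot\sigma\mathbf{n})\mathbf{n}+(\tau\cdot\sigma\mathbf{n})\tau$ and invoking the interface conditions \eqref{1.13}, \eqref{1.16}, \eqref{1.17}, \eqref{1.18} together with the Beavers–Joseph–Saffman law \eqref{1.15}, the normal contributions telescope using $\mathbf{n}_p=-\mathbf{n}_f=-\mathbf{n}$ and $(\mathbf{v}-\mathbf{U}_t)\cdot\mathbf{n}=\mathbf{q}\cdot\mathbf{n}$, and the tangential ones collapse to $\beta\|(\mathbf{v}-\mathbf{U}_t)\cdot\tau\|^2_{L^2(\Gamma)}$, which is precisely the third dissipation term in \eqref{2.23}. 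I would also need $\langle p_{in}(t)\mathbf{n}_f,\mathbf{v}\rangle_{\Gamma_f^{in}}$ to survive into $\mathbf{F}(t)$, which it does since that boundary is disjoint from $\Gamma$.

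Finally, assembling all identities and recognizing $-2(\mathbf{h},\mathbf{U})_{\Omega_p}$ inside $\mathbf{J}$: the term $(\mathbf{h},\mathbf{U}_t)_{\Omega_p}=\tfrac{d}{dt}(\mathbf{h},\mathbf{U})_{\Omega_p}$ (here using that $\mathbf{h}$ is $t$-independent, as assumed) moves to the $\mathbf{J}$ side with a minus sign and the factor $\tfrac12\cdot 2$ bookkeeping matches \eqref{2.24}. The right-hand sides $(\mathbf{f},\mathbf{v})_{\Omega_f}$, $(g,p_f)_{\Omega_f}$, $(s,p_p)_{\Omega_p}$ collect into $\mathbf{F}(t)$ as in \eqref{2.25}. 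Integrating the resulting identity $\tfrac{d}{dt}\mathbf{J}(t)+(\text{dissipation})=\mathbf{F}(t)$ from $0$ to $t$ and using the initial conditions \eqref{1.12} (so the reconstructed initial data for $\xi,\eta,\mathbf{U}$ via \eqref{1.19} feed into $\mathbf{J}(0)$) gives \eqref{2.23}. The main obstacle I anticipate is the careful treatment of the time-differentiated constraint \eqref{2.18} and making the cross term $k_1(\eta_t,\xi)$ cancel cleanly — one must choose the test functions in a mutually consistent way (in particular using $z=p_p$ rather than $z=\eta$) so that no uncontrolled term survives; a secondary technical point is justifying the integration by parts / regularity needed to write $(\partial_t\eta,\cdot)$ and $(\mathbf{D}(\mathbf{U}_t),\cdot)$ rigorously, which for the a priori estimate can be done by a standard Galerkin/density argument.
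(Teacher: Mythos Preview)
Your proposal is correct and follows essentially the same route as the paper: differentiate \eqref{2.18} in time and test with $w=\xi$, take $\varphi_{f}=\mathbf{v}$, $\psi_{f}=p_{f}$, $\varphi_{p}=\mathbf{U}_{t}$, $\mathbf{r}=\mathbf{q}$, $z=p_{p}=k_{1}\xi+k_{2}\eta$, then add, use the interface conditions to reduce the boundary terms on $\Gamma$ to the $\beta$-dissipation, and integrate in time. Your discussion of the interface-term cancellation and the $k_{1}(\eta_{t},\xi)$ cross-term is in fact more explicit than the paper's, which simply states the summed identity \eqref{2.54} directly.
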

\begin{proof}
Differentiating (\ref{2.18}) with respect to~$t$, taking $
		\varphi_{f}=\mathbf{v}, \mathbf{r}=\mathbf{q}, \psi_{f}=p_{f}, \varphi_{p}=\mathbf{U}_{t}, w=\xi, z=p_{p}=k_{1}\xi+k_{2}\eta$,  we have
	\begin{eqnarray}
			&-\langle\sigma_{f}\mathbf{n}_{f},\mathbf{v}\rangle_{\Gamma}+2\mu_{f}\|\mathbf{D}(\mathbf{v})\|^{2}_{L^{2}(\Omega_{f})}
		=(\mathbf{f},\mathbf{v})_{\Omega_{f}}+\langle-p_{in}(t)\mathbf{n}_{f},\mathbf{v}\rangle_{\Gamma_{f}^{in}}+(g,p_{f})_{\Omega_{f}},\label{2.51}\\		
			&\mu_{p}\frac{d}{dt}\|\mathbf{D}(\mathbf{U})\|^{2}_{L^{2}(\Omega_{p})}
			+\frac{k_{2}}{2}\frac{d}{dt}\|\eta\|^{2}_{L^{2}(\Omega_{p})}+\frac{k_{3}}{2}\frac{d}{dt}\|\xi\|^{2}_{L^{2}(\Omega_{p})}-\langle\sigma_{p}\mathbf{n}_{p},\mathbf{U}_{t}\rangle_{\Gamma}\nonumber\\
			&+\langle p_{p}\mathbf{n}_{p},\mathbf{q}\rangle_{\Gamma}
			+k^{-1}\|\mathbf{q}\|^{2}_{L^{2}(\Omega_{p})}
			=(s,p_{p})_{\Omega_{p}}+\frac{d}{dt}(\mathbf{h},\mathbf{U})_{\Omega_{p}}.\label{2.52}
			\end{eqnarray}
	Adding (\ref{2.51}) and (\ref{2.52}), we get
	\begin{eqnarray}\label{2.54}
		\begin{aligned}
			&\beta\|(\mathbf{v}-\mathbf{U}_{t})\cdot\tau\|_{L^{2}(\Gamma)}^{2}+2\mu_{f}\|\mathbf{D}(\mathbf{v})\|^{2}_{L^{2}(\Omega_{f})}
			+k^{-1}\|\mathbf{q}\|^{2}_{L^{2}(\Omega_{p})}\\
			&+\mu_{p}\frac{d}{dt}\|\mathbf{D}(\mathbf{U})\|^{2}_{L^{2}(\Omega_{p})}
			+\frac{k_{2}}{2}\frac{d}{dt}\|\eta\|^{2}_{L^{2}(\Omega_{p})}+\frac{k_{3}}{2}\frac{d}{dt}\|\xi\|^{2}_{L^{2}(\Omega_{p})}\\
			=&(\mathbf{f},\mathbf{v})_{\Omega_{f}}+\langle-p_{in}(t)\mathbf{n}_{f},\mathbf{v}\rangle_{\Gamma_{f}^{in}}
			+(g,p_{f})_{\Omega_{f}}+(s,p_{p})_{\Omega_{p}}+\frac{d}{dt}(\mathbf{h},\mathbf{U})_{\Omega_{p}}.
		\end{aligned}
	\end{eqnarray}
Integrating (\ref{2.54}) from $[0, t]$, we obtain
	\begin{eqnarray*}
		&&\int^{t}_{0}[\beta\|(\mathbf{v}-\mathbf{U}_{t})\cdot\tau\|_{L^{2}(\Gamma)}^{2}+2\mu_{f}\|\mathbf{D}(\mathbf{v})\|^{2}_{L^{2}(\Omega_{f})}
		+k^{-1}\|\mathbf{q}\|^{2}_{L^{2}(\Omega_{p})}]dt\\
		&&+\mu_{p}\|\mathbf{D}(\mathbf{U}(t))\|^{2}_{L^{2}(\Omega_{p})}
		+\frac{k_{2}}{2}\|\eta(t)\|^{2}_{L^{2}(\Omega_{p})}+\frac{k_{3}}{2}\|\xi(t)\|^{2}_{L^{2}(\Omega_{p})}-(\mathbf{h},\mathbf{U}(t))_{\Omega_{p}}\\
		&=&\mu_{p}\|\mathbf{D}(\mathbf{U}(0))\|^{2}_{L^{2}(\Omega_{p})}
		+\frac{k_{2}}{2}\|\eta(0)\|^{2}_{L^{2}(\Omega_{p})}+\frac{k_{3}}{2}\|\xi(0)\|^{2}_{L^{2}(\Omega_{p})}-(\mathbf{h},\mathbf{U}(0))_{\Omega_{p}}\\
		&&+\int^{t}_{0}[(\mathbf{f},\mathbf{v})_{\Omega_{f}}+\langle-p_{in}(t)\mathbf{n}_{f},\mathbf{v}\rangle_{\Gamma_{f}^{in}}
		+(g,p_{f})_{\Omega_{f}}+(s,p_{p})_{\Omega_{p}}]dt,
	\end{eqnarray*}
	which implies that (\ref{2.23}) holds. The proof is complete.
\end{proof}

Taking the similar argument of Lemma \ref{lemn2}, we can get the following result, and here we omit the detail of the proof. 

\begin{lemm}\label{lemn1}
	Every weak solution~$(\mathbf{v},p_{f},\mathbf{U},\mathbf{q},p_{p})$~of the problem (\ref{2.38})-(\ref{2.42}) satisfies the following energy law
	\begin{equation}\label{2.44}
		\mathbf{E}(t)+\int_{0}^{t}[2\mu_{f}\|\mathbf{D}(\mathbf{v})\|^{2}_{L^{2}(\Omega_{f})}+k^{-1}\|\mathbf{q}\|^{2}_{L^{2}(\Omega_{p})}
		+\beta\|(\mathbf{v}-\mathbf{U}_{t})\cdot\tau\|^{2}_{L^{2}(\Gamma)}]dt-\int_{0}^{t}\mathbf{F}(t)dt=\mathbf{E}(0),
	\end{equation}
	for all~$t\in(0,T]$, where
	\begin{eqnarray}
		\mathbf{E}(t)&=&\frac{1}{2}[2\mu_{p}\|\mathbf{D}(\mathbf{U}(t))\|^{2}_{L^{2}(\Omega_{p})}+\lambda_{p}\|\nabla\cdot\mathbf{U}(t)\|^{2}_{L^{2}(\Omega_{p})}+s_{0}\|p_{p}(t)\|^{2}_{L^{2}(\Omega_{p})}-2(\mathbf{h},\mathbf{U}(t))_{\Omega_{p}}],\label{2.45} \\
		\mathbf{F}(t)&=&(\mathbf{f},\mathbf{v})_{\Omega_{f}}+\langle p_{in}(t)\mathbf{n}_{f},\mathbf{v}\rangle_{\Gamma_{f}^{in}}+(g,p_{f})_{\Omega_{f}}+(s,p_{p})_{\Omega_{p}}.\label{2.46}
	\end{eqnarray}
\end{lemm}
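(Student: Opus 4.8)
The plan is to follow exactly the strategy used in the proof of Lemma~\ref{lemn2}: test the weak formulation (\ref{2.38})--(\ref{2.42}) against the solution itself in the appropriate slots and sum. Concretely, I would take $\varphi_f=\mathbf{v}$ in (\ref{2.38}), $\psi_f=p_f$ in (\ref{2.39}), $\varphi_p=\mathbf{U}_t$ in (\ref{2.40}), $\mathbf{r}=\mathbf{q}$ in (\ref{2.41}), and $z=p_p$ in (\ref{2.42}). In contrast with the reformulated system, there is no constraint equation of the type (\ref{2.18}) that must first be differentiated in time, so this step is actually slightly simpler here: every time derivative we need is already present in (\ref{2.40})--(\ref{2.42}).

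First I would assemble the fluid part: equation (\ref{2.39}) with $\psi_f=p_f$ lets me replace $(p_f,\nabla\cdot\mathbf{v})_{\Omega_f}$ occurring in (\ref{2.38}) by $(g,p_f)_{\Omega_f}$, which produces the exact analogue of (\ref{2.51}). Next, in the poroelastic part, the choice $\varphi_p=\mathbf{U}_t$ converts $2\mu_p(\mathbf{D}(\mathbf{U}),\mathbf{D}(\mathbf{U}_t))_{\Omega_p}$ and $\lambda_p(\nabla\cdot\mathbf{U},\nabla\cdot\mathbf{U}_t)_{\Omega_p}$ into $\mu_p\frac{d}{dt}\|\mathbf{D}(\mathbf{U})\|_{L^2(\Omega_p)}^2$ and $\frac{\lambda_p}{2}\frac{d}{dt}\|\nabla\cdot\mathbf{U}\|_{L^2(\Omega_p)}^2$, while $z=p_p$ in (\ref{2.42}) yields $\frac{s_0}{2}\frac{d}{dt}\|p_p\|_{L^2(\Omega_p)}^2+\alpha(\nabla\cdot\mathbf{U}_t,p_p)_{\Omega_p}+(\nabla\cdot\mathbf{q},p_p)_{\Omega_p}=(s,p_p)_{\Omega_p}$. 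The key algebraic cancellations are then: the Biot coupling term $-\alpha(p_p,\nabla\cdot\mathbf{U}_t)_{\Omega_p}$ coming from (\ref{2.40}) cancels $+\alpha(\nabla\cdot\mathbf{U}_t,p_p)_{\Omega_p}$ coming from (\ref{2.42}); and $(\nabla\cdot\mathbf{q},p_p)_{\Omega_p}$ cancels $-(p_p,\nabla\cdot\mathbf{q})_{\Omega_p}$ from (\ref{2.41}), leaving behind $k^{-1}\|\mathbf{q}\|_{L^2(\Omega_p)}^2$ and the interface term $\langle p_p\mathbf{n}_p,\mathbf{q}\rangle_\Gamma$. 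This reproduces the analogue of (\ref{2.52}), now with the energy functional $\mathbf{E}$ of (\ref{2.45}) in place of $\mathbf{J}$.

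The only genuinely nontrivial step --- the one I expect to be the main obstacle --- is showing that the three surviving boundary contributions $-\langle\sigma_f\mathbf{n}_f,\mathbf{v}\rangle_\Gamma$, $-\langle\sigma_p\mathbf{n}_p,\mathbf{U}_t\rangle_\Gamma$ and $\langle p_p\mathbf{n}_p,\mathbf{q}\rangle_\Gamma$ collapse into the single dissipative term $\beta\|(\mathbf{v}-\mathbf{U}_t)\cdot\tau\|_{L^2(\Gamma)}^2$. For this I would decompose each stress vector on $\Gamma$ into its $\mathbf{n}$- and $\tau$-components, use $\mathbf{n}_f=\mathbf{n}=-\mathbf{n}_p$, and then invoke the interface conditions: (\ref{1.16}) and (\ref{1.17}) to replace both normal stresses $\mathbf{n}\cdot\sigma_f\mathbf{n}$, $\mathbf{n}\cdot\sigma_p\mathbf{n}$ by $-p_p$; (\ref{1.13}) to rewrite $\mathbf{q}\cdot\mathbf{n}_p$ as $-(\mathbf{v}-\mathbf{U}_t)\cdot\mathbf{n}$, so that all normal-direction contributions cancel; (\ref{1.18}) to identify $\tau\cdot\sigma_p\mathbf{n}$ with $\tau\cdot\sigma_f\mathbf{n}$; and finally the Beavers--Joseph--Saffman law (\ref{1.15}), which turns the remaining $-\langle\tau\cdot\sigma_f\mathbf{n},(\mathbf{v}-\mathbf{U}_t)\cdot\tau\rangle_\Gamma$ into $\beta\|(\mathbf{v}-\mathbf{U}_t)\cdot\tau\|_{L^2(\Gamma)}^2$. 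Since the interface conditions are identical for the original and the reformulated problems, this computation is verbatim the one already carried out (implicitly) in passing from (\ref{2.51})--(\ref{2.52}) to (\ref{2.54}) in Lemma~\ref{lemn2}. Adding the fluid and poroelastic identities, using this interface identity, and integrating over $[0,t]$ --- with $\int_0^t\frac{d}{dt}(\mathbf{h},\mathbf{U})_{\Omega_p}\,dt=(\mathbf{h},\mathbf{U}(t))_{\Omega_p}-(\mathbf{h},\mathbf{U}(0))_{\Omega_p}$ and the initial data $\mathbf{v}^0,\mathbf{U}^0,p_p^0$ --- then gives (\ref{2.44}) with $\mathbf{F}$ as in (\ref{2.46}).
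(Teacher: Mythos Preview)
Your proposal is correct and follows exactly the approach the paper intends: the paper itself omits the proof, stating only that it proceeds by ``the similar argument of Lemma~\ref{lemn2}'', and your choice of test functions $\varphi_f=\mathbf{v}$, $\psi_f=p_f$, $\varphi_p=\mathbf{U}_t$, $\mathbf{r}=\mathbf{q}$, $z=p_p$ together with the cancellation of the Biot coupling terms and the interface computation is precisely that argument. Your observation that no constraint equation needs to be differentiated here (in contrast with (\ref{2.18}) in Lemma~\ref{lemn2}) is correct and is indeed a slight simplification.
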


From Lemma \ref{lemn2}, we obtain the following estimates:
\begin{lemm}
There exists a positive constant $C_{1}$ such that 
\begin{eqnarray}
\begin{aligned}
  &\sqrt{2\mu_{f}}\|\mathbf{D}(\mathbf{v})\|_{L^{2}(0,T,L^{2}(\Omega_{f}))}+\sqrt{k^{-1}}\|\mathbf{q}\|_{L^{2}(0,T,L^{2}(\Omega_{p}))}\\
  &+\sqrt{\mu_{p}}\|\mathbf{D}(\mathbf{U})\|_{L^{\infty}(0,T,L^{2}(\Omega_{p}))}
  +\sqrt{\beta}\|(\mathbf{v}-\mathbf{U}_{t})\cdot\tau\|_{L^{2}(0,T,L^{2}(\Gamma))}\\
  &+\sqrt{\frac{k_{2}}{2}}\|\eta\|_{L^{\infty}(0,T,L^{2}(\Omega_{p}))}+\sqrt{\frac{k_{3}}{2}}\|\xi\|_{L^{\infty}(0,T,L^{2}(\Omega_{p}))}\leq C_{1},
\end{aligned}
\end{eqnarray}
where $C_1$ is a constant dependent on the initial data and source functions of $\|\mathbf{v}^{0}\|_{H^{1}(\Omega_{f})}$, $\|p_{f}^{0}\|_{L^{2}(\Omega_{f})}$, $\|p_{p}^{0}\|_{L^{2}(\Omega_{p})}$,
$\|\mathbf{U}^{0}\|_{H^{1}(\Omega_{p})}$, $ 
\|\mathbf{f}\|_{L^{2}(\Omega_{f})}$, $\|g\|_{L^{2}(\Omega_{f})}$, $\|\mathbf{h}\|_{L^{2}(\Omega_{p})}$, $\|s\|_{L^{2}(\Omega_{p})}$.
\end{lemm}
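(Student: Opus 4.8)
The plan is to read the a priori bound directly off the energy identity \eqref{2.23} of Lemma \ref{lemn2}: bound $\mathbf{J}(0)$ from above by the data, bound $\mathbf{J}(t)$ from below by the norms appearing in the claim, control $\int_0^t\mathbf{F}(t')\,dt'$ by a small multiple of the coercive quantities on the left of \eqref{2.23} plus data, and then close with Gr\"onwall's inequality.

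First I would clean up $\mathbf{J}$. The only sign-indefinite term in \eqref{2.24} is $-(\mathbf{h},\mathbf{U}(t))_{\Omega_p}$; since functions in $\mathbf{X}^p$ vanish on $\Gamma_p^{in}\cup\Gamma_p^{out}$, Korn's and Poincar\'e's inequalities give $\|\mathbf{U}(t)\|_{L^2(\Omega_p)}\le C\|\mathbf{D}(\mathbf{U}(t))\|_{L^2(\Omega_p)}$, hence by Young's inequality $|(\mathbf{h},\mathbf{U}(t))_{\Omega_p}|\le \tfrac{\mu_p}{2}\|\mathbf{D}(\mathbf{U}(t))\|^2_{L^2(\Omega_p)}+C\|\mathbf{h}\|^2_{L^2(\Omega_p)}$, so $\mathbf{J}(t)\ge \tfrac12\big(\mu_p\|\mathbf{D}(\mathbf{U}(t))\|^2_{L^2(\Omega_p)}+k_2\|\eta(t)\|^2_{L^2(\Omega_p)}+k_3\|\xi(t)\|^2_{L^2(\Omega_p)}\big)-C\|\mathbf{h}\|^2_{L^2(\Omega_p)}$. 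For $\mathbf{J}(0)$ I would use the initial data together with $\xi(0)=\alpha p_p^0-\lambda_p\nabla\cdot\mathbf{U}^0$ and $\eta(0)=s_0p_p^0+\alpha\nabla\cdot\mathbf{U}^0$ to get $\mathbf{J}(0)\le C\big(\|\mathbf{U}^0\|^2_{H^1(\Omega_p)}+\|p_p^0\|^2_{L^2(\Omega_p)}+\|\mathbf{h}\|^2_{L^2(\Omega_p)}\big)$.

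Next I would bound $\int_0^t\mathbf{F}(t')\,dt'$ term by term from \eqref{2.25}. The terms $(\mathbf{f},\mathbf{v})_{\Omega_f}$ and $\langle p_{in}(t)\mathbf{n}_f,\mathbf{v}\rangle_{\Gamma_f^{in}}$ are handled with Korn--Poincar\'e on $\mathbf{V}^f$ (functions vanish on $\Gamma_f^{out}$) and the trace inequality, giving $\le\varepsilon\|\mathbf{D}(\mathbf{v})\|^2_{L^2(\Omega_f)}+C_\varepsilon\big(\|\mathbf{f}\|^2_{L^2(\Omega_f)}+\|p_{in}\|^2_{L^2(\Gamma_f^{in})}\big)$. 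For $(s,p_p)_{\Omega_p}$ I would substitute $p_p=k_1\xi+k_2\eta$ and note from \eqref{1.20} that $k_1^2/k_3$, $k_2$ and $k_3$ remain bounded, obtaining $\le\varepsilon\big(k_3\|\xi\|^2_{L^2(\Omega_p)}+k_2\|\eta\|^2_{L^2(\Omega_p)}\big)+C_\varepsilon\|s\|^2_{L^2(\Omega_p)}$. The genuinely delicate term is $(g,p_f)_{\Omega_f}$, because no norm of $p_f$ occurs in \eqref{2.23}; here I would invoke the Stokes inf-sup (LBB) condition on $\mathbf{V}^f\times Q^f$, test \eqref{2.15}, and rewrite the interface traction $\langle\sigma_f\mathbf{n}_f,\varphi_f\rangle_\Gamma$ via the interface and Beavers--Joseph--Saffman conditions \eqref{1.16}, \eqref{1.15} in terms of $p_p|_\Gamma$ and $(\mathbf{v}-\mathbf{U}_t)\cdot\tau$, to obtain $\|p_f\|_{L^2(\Omega_f)}\le C\big(\|\mathbf{D}(\mathbf{v})\|_{L^2(\Omega_f)}+\|\mathbf{f}\|_{L^2(\Omega_f)}+\|p_{in}\|_{L^2(\Gamma_f^{in})}+\|p_p\|_{L^2(\Gamma)}+\|(\mathbf{v}-\mathbf{U}_t)\cdot\tau\|_{L^2(\Gamma)}\big)$; then I would control the boundary term by $\|p_p\|_{L^2(\Gamma)}\le C\|p_p\|_{H^1(\Omega_p)}\le C\big(\|p_p\|_{L^2(\Omega_p)}+\|\mathbf{q}\|_{L^2(\Omega_p)}\big)\le C\big(k_1\|\xi\|_{L^2(\Omega_p)}+k_2\|\eta\|_{L^2(\Omega_p)}+\|\mathbf{q}\|_{L^2(\Omega_p)}\big)$, using $\mathbf{q}=-k\nabla p_p$ and $p_p=k_1\xi+k_2\eta$. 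Young's inequality then bounds $(g,p_f)_{\Omega_f}$ by $\varepsilon\big(\|\mathbf{D}(\mathbf{v})\|^2_{L^2(\Omega_f)}+k^{-1}\|\mathbf{q}\|^2_{L^2(\Omega_p)}+\beta\|(\mathbf{v}-\mathbf{U}_t)\cdot\tau\|^2_{L^2(\Gamma)}+k_3\|\xi\|^2_{L^2(\Omega_p)}+k_2\|\eta\|^2_{L^2(\Omega_p)}\big)+C_\varepsilon\|g\|^2_{L^2(\Omega_f)}$.

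Finally I would collect everything in \eqref{2.23}: choosing $\varepsilon$ small, the $\varepsilon$-terms proportional to $\|\mathbf{D}(\mathbf{v})\|^2_{L^2(\Omega_f)}$, $k^{-1}\|\mathbf{q}\|^2_{L^2(\Omega_p)}$, $\beta\|(\mathbf{v}-\mathbf{U}_t)\cdot\tau\|^2_{L^2(\Gamma)}$ and $\mu_p\|\mathbf{D}(\mathbf{U}(t))\|^2_{L^2(\Omega_p)}$ are absorbed into the matching terms on the left, leaving
\begin{align*}
&\mathbf{J}(t)+\tfrac12\int_0^t\big(2\mu_f\|\mathbf{D}(\mathbf{v})\|^2_{L^2(\Omega_f)}+k^{-1}\|\mathbf{q}\|^2_{L^2(\Omega_p)}+\beta\|(\mathbf{v}-\mathbf{U}_t)\cdot\tau\|^2_{L^2(\Gamma)}\big)\,dt'\\
&\qquad\le C_{d}+C\int_0^t\big(k_2\|\eta(t')\|^2_{L^2(\Omega_p)}+k_3\|\xi(t')\|^2_{L^2(\Omega_p)}\big)\,dt',
\end{align*}
where $C_{d}$ depends only on $\|\mathbf{v}^0\|_{H^1(\Omega_f)}$, $\|p_f^0\|_{L^2(\Omega_f)}$, $\|p_p^0\|_{L^2(\Omega_p)}$, $\|\mathbf{U}^0\|_{H^1(\Omega_p)}$, $\|\mathbf{f}\|_{L^2(\Omega_f)}$, $\|g\|_{L^2(\Omega_f)}$, $\|\mathbf{h}\|_{L^2(\Omega_p)}$, $\|s\|_{L^2(\Omega_p)}$ and $\|p_{in}\|_{L^2(\Gamma_f^{in})}$. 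Using the lower bound on $\mathbf{J}(t)$ and applying Gr\"onwall's inequality in $t$ to $k_2\|\eta(t)\|^2_{L^2(\Omega_p)}+k_3\|\xi(t)\|^2_{L^2(\Omega_p)}$ gives the $L^\infty(0,T;L^2)$ bounds on $\xi$, $\eta$ and $\mathbf{D}(\mathbf{U})$ (the factor $e^{CT}$ being absorbed into $C_1$); substituting these back on the right then controls the $L^2(0,T;L^2)$ norms of $\mathbf{D}(\mathbf{v})$, $\mathbf{q}$ and $(\mathbf{v}-\mathbf{U}_t)\cdot\tau$. Taking square roots and using $\sqrt{a+b}\le\sqrt a+\sqrt b$ yields the claimed inequality. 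I expect the main obstacle to be the term $(g,p_f)_{\Omega_f}$: since the energy law carries no pressure norm, one must bring in the Stokes inf-sup condition and carefully recycle the interface conditions and the relation $\mathbf{q}=-k\nabla p_p$ so that the resulting bound involves only quantities already controlled by \eqref{2.23}; a secondary point is to track the dependence of the constants on the coefficients \eqref{1.20}, in particular that $k_1^2/k_3$, $k_2$, $k_3$ stay bounded, so that $C_1$ is independent of $\lambda_p$, which is what ultimately excludes locking.
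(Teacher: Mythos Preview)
Your proposal is correct and follows exactly the route the paper indicates: the paper gives no proof of this lemma at all, merely prefacing it with ``From Lemma~\ref{lemn2}, we obtain the following estimates,'' so the entire content is meant to be read off the energy identity \eqref{2.23}. Your derivation---absorbing $-(\mathbf{h},\mathbf{U})$ via Korn--Poincar\'e and Young, bounding each summand of $\mathbf{F}(t)$, and closing with Gr\"onwall on $k_2\|\eta\|^2+k_3\|\xi\|^2$---is the standard way to convert such an identity into a priori bounds, and supplies considerably more detail than the paper does. In particular, your handling of $(g,p_f)_{\Omega_f}$ through the Stokes inf--sup condition, the interface relations \eqref{1.16}, \eqref{1.15}, and $\mathbf{q}=-k\nabla p_p$ is the right idea for the one term the energy law does not directly control; the paper is silent on this point. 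Your side remark about tracking $k_1^2/k_3$ and $k_2$ to keep $C_1$ independent of $\lambda_p$ goes beyond what the lemma asserts but is in the spirit of Remark~\ref{rem200527-2}.
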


Furthermore, we have the following priori estimates:
\begin{lemm}\label{lma210801-1}
Suppose that $\mathbf{v^{0}},~\mathbf{U^{0}}$ and~$p_{p}^{0}$~are sufficiently smooth, then there exist positive constants ~$C_{2}=C_{2}(C_{1},\|\mathbf{q}^{0}\|_{L^{2}(\Omega_{p})})$~and~$C_{3}=C_{3}(C_{1},
C_{2},\|\mathbf{U}^{0}\|_{H^{2}(\Omega_{p})},\|p_{p}^{0}\|_{H^{2}(\Omega_{p})})$~such that 
\begin{eqnarray}
  &\sqrt{\mu_{f}}\|\mathbf{D}(\mathbf{v})\|_{L^{\infty}(0,T,L^{2}(\Omega_{f}))}+\sqrt{(2k)^{-1}}\|\mathbf{q}\|_{L^{\infty}(0,T,L^{2}(\Omega_{p}))}+\sqrt{2\mu_{p}}\|\mathbf{D}(\mathbf{U}_{t})\|_{L^{2}(0,T,L^{2}(\Omega_{p}))}\nonumber\\
  &
  +\sqrt{\frac{\beta}{2}}\|(\mathbf{v}-\mathbf{U}_{t})\cdot\tau\|_{L^{\infty}(0,T,L^{2}(\Gamma))}+\sqrt{k_{2}}\|\eta_{t}\|_{L^{2}(0,T,L^{2}(\Omega_{p}))}
  +\sqrt{k_{3}}\|\xi_{t}\|_{L^{2}(0,T,L^{2}(\Omega_{p}))}\leq C_{2},\label{2.32}\\
  &\sqrt{2\mu_{f}}\|\mathbf{D}(\mathbf{v}_{t})\|_{L^{2}(0,T,L^{2}(\Omega_{f}))}+\sqrt{k^{-1}}\|\mathbf{q}_{t}\|_{L^{2}(0,T,L^{2}(\Omega_{p}))}+\sqrt{\mu_{p}}\|\mathbf{D}(\mathbf{U}_{t})\|_{L^{\infty}(0,T,L^{2}(\Omega_{p}))}\nonumber\\
  &
  +\sqrt{\beta}\|(\mathbf{v}_{t}-\mathbf{U}_{tt})\cdot\tau\|_{L^{2}(0,T,L^{2}(\Gamma))}+\sqrt{\frac{k_{2}}{2}}\|\eta_{t}\|_{L^{\infty}(0,T,L^{2}(\Omega_{p}))}
  +\sqrt{\frac{k_{3}}{2}}\|\xi_{t}\|_{L^{\infty}(0,T,L^{2}(\Omega_{p}))}\leq C_{3}.\label{2.33}
\end{eqnarray}
\end{lemm}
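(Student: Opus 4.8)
The plan is to obtain (\ref{2.32}) and (\ref{2.33}) by differentiating the weak formulation in time and then replaying the energy identity of Lemma~\ref{lemn2} on the differentiated system; since $\mathbf f,g,\mathbf h,s$ are $t$-independent, each differentiation annihilates the volume sources and only replaces $p_{in}(t)$ by $p_{in}'(t)$ on $\Gamma_f^{in}$, so one also needs a mild time-regularity assumption on $p_{in}$. It is convenient to run the whole computation on the original mixed form (\ref{2.38})--(\ref{2.42}) and to convert to the pseudo-pressure norms at the end through the algebraic identity $k_2\|\eta_t\|_{L^2(\Omega_p)}^2+k_3\|\xi_t\|_{L^2(\Omega_p)}^2=s_0\|p_{p,t}\|_{L^2(\Omega_p)}^2+\lambda_p\|\nabla\!\cdot\!\mathbf U_t\|_{L^2(\Omega_p)}^2$, a direct consequence of the definitions of $\xi,\eta$ and of $k_1,k_2,k_3$ in (\ref{1.19})--(\ref{1.20}); in particular all constants below stay independent of $\lambda_p$.

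For (\ref{2.32}) I would differentiate (\ref{2.40}) and (\ref{2.41}) in $t$, keep (\ref{2.42}), and test with $(\varphi_f,\psi_f,\varphi_p,\mathbf r,z)=(\mathbf v,p_f,\mathbf U_t,\mathbf q,p_{p,t})$, $p_{p,t}:=\partial_t p_p$. The differentiated momentum, elasticity and Darcy equations then contribute $\mu_f\frac{d}{dt}\|\mathbf D(\mathbf v)\|^2$, the dissipative $2\mu_p\|\mathbf D(\mathbf U_t)\|^2+\lambda_p\|\nabla\!\cdot\!\mathbf U_t\|^2$, and $\tfrac12 k^{-1}\frac{d}{dt}\|\mathbf q\|^2$, while the undifferentiated (\ref{2.42}) tested with $p_{p,t}$ gives the dissipative $s_0\|p_{p,t}\|^2$ together with couplings $\alpha(\nabla\!\cdot\!\mathbf U_t,p_{p,t})$ and $(\nabla\!\cdot\!\mathbf q,p_{p,t})$ that cancel against the matching terms from the elasticity and Darcy equations. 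The interface integrals $\langle(\sigma_f)_t\mathbf n_f,\cdot\rangle_\Gamma$, $\langle(\sigma_p)_t\mathbf n_p,\cdot\rangle_\Gamma$ and $\langle p_{p,t}\mathbf n_p,\mathbf q\rangle_\Gamma$ are reorganised exactly as in the proof of Lemma~\ref{lemn2}, now using the $t$-derivatives of (\ref{1.13}), (\ref{1.16})--(\ref{1.18}) and of the Beavers--Joseph--Saffman law (\ref{1.15}): the normal contributions cancel and the tangential one collapses to $-\tfrac{\beta}{2}\frac{d}{dt}\|(\mathbf v-\mathbf U_t)\cdot\tau\|_{L^2(\Gamma)}^2$. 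Integrating over $(0,t)$, handling the $\Gamma_f^{in}$ forcing $\langle p_{in}'\mathbf n_f,\mathbf v\rangle$, the $(s,p_{p,t})$ term and the residual $(p_{f,t},\nabla\!\cdot\!\mathbf v)=(p_{f,t},g)$ term (the last two integrated by parts in time, with $p_f$ estimated by the Stokes inf--sup condition) via Young, Korn and trace inequalities, and invoking Lemma~\ref{lemn2} for the $C_1$-quantities, Gronwall's lemma yields (\ref{2.32}); the only new datum needed is $\|\mathbf q^0\|_{L^2(\Omega_p)}$, since the initial energy here also involves $\mathbf U_t(0)$, which the quasi-static elasticity/mass-balance system at $t=0$ bounds in $H^1(\Omega_p)$ in terms of $\|\mathbf q^0\|_{L^2(\Omega_p)}$ and $\|s\|_{L^2(\Omega_p)}$.

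For (\ref{2.33}) I would differentiate all of (\ref{2.38})--(\ref{2.42}) once and test with $(\mathbf v_t,p_{f,t},\mathbf U_{tt},\mathbf q_t,p_{p,t})$, i.e., apply the energy identity of Lemma~\ref{lemn2} verbatim to the differentiated system. This produces the dissipative $2\mu_f\|\mathbf D(\mathbf v_t)\|^2+k^{-1}\|\mathbf q_t\|^2+\beta\|(\mathbf v_t-\mathbf U_{tt})\cdot\tau\|_{L^2(\Gamma)}^2$ together with $\frac{d}{dt}$ of $\mu_p\|\mathbf D(\mathbf U_t)\|^2+\tfrac12\big(\lambda_p\|\nabla\!\cdot\!\mathbf U_t\|^2+s_0\|p_{p,t}\|^2\big)$, the last bracket equal to $\tfrac12(k_2\|\eta_t\|^2+k_3\|\xi_t\|^2)$. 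After integrating in time one must: (i) bound the initial quantities $\|\mathbf D(\mathbf U_t(0))\|$, $\|\nabla\!\cdot\!\mathbf U_t(0)\|$, $\|p_{p,t}(0)\|$ by a compatibility argument — Darcy at $t=0$ gives $\mathbf q(0)=-k\nabla p_p^0$, whose divergence lies in $L^2$ exactly when $p_p^0\in H^2$, and then the $t=0$ elasticity/mass-balance pair (equivalently (\ref{1.23})--(\ref{1.24}) at $t=0$, with data $\eta_t(0)=s+\nabla\!\cdot\!(k\nabla p_p^0)$) is a Stokes-type system determining $\mathbf U_t(0),\xi_t(0)$ with $\|\mathbf U_t(0)\|_{H^1(\Omega_p)}+\|\xi_t(0)\|_{L^2(\Omega_p)}\lesssim\|s\|_{L^2(\Omega_p)}+\|p_p^0\|_{H^2(\Omega_p)}$, hence all required initial norms; and (ii) control the $\Gamma_f^{in}$ forcing by integrating $\int_0^t\langle p_{in}'\mathbf n_f,\mathbf v_t\rangle$ by parts in time, estimating the boundary term through the $C_2$-bound on $\|\mathbf D(\mathbf v)\|_{L^\infty(0,T,L^2(\Omega_f))}$ and a trace inequality (whence $C_3$ depends on $C_2$), and absorbing the remaining $\int_0^t\langle p_{in}''\mathbf n_f,\mathbf v\rangle$ via $C_1$. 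Gronwall then gives (\ref{2.33}).

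The main obstacle is twofold. First, the interface bookkeeping: one must repeatedly combine the (differentiated) conditions (\ref{1.13})--(\ref{1.18}) and (\ref{1.15}) so that the normal boundary stresses cancel and precisely the nonnegative tangential term $\beta\|(\mathbf v-\mathbf U_t)\cdot\tau\|^2$ (respectively its time derivative) survives — this is the only step where the coupling structure is genuinely used. Second, the compatibility/bootstrapping step that supplies $\mathbf U_t(0),p_{p,t}(0),\xi_t(0),\eta_t(0)$ from $\mathbf v^0,\mathbf U^0,p_p^0$, which is what forces the $H^2$-regularity hypotheses and the stated dependence of $C_2,C_3$ on $\|\mathbf q^0\|_{L^2(\Omega_p)}$, $\|\mathbf U^0\|_{H^2(\Omega_p)}$, $\|p_p^0\|_{H^2(\Omega_p)}$. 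Everything else — Korn, Poincar\'e, trace and Young inequalities and Gronwall's lemma — is routine.
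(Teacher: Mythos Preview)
Your approach is correct and is essentially the paper's: differentiate the weak formulation in time and rerun the energy identity with the appropriately shifted test functions, so that for (\ref{2.32}) the viscous/Darcy terms become $\tfrac{d}{dt}$ of squares while the elastic/storage terms become squared time-derivatives, and the roles swap for (\ref{2.33}). The only cosmetic differences are that the paper works directly in the reformulated $(\xi,\eta)$-variables (differentiating (\ref{2.15}), (\ref{2.17})--(\ref{2.19}) and testing with $(\mathbf v,p_{f,t},\mathbf U_t,\xi_t,\mathbf q,p_{p,t})$ for (\ref{2.32}); differentiating everything once and (\ref{2.18}) twice for (\ref{2.33})), obtains the bounds without Gronwall since after integrating $(g,p_{f,t})+(s,p_{p,t})$ by parts in time the right-hand side is already controlled by $C_1$, and is silent on the $p_{in}'$ contribution and the $t=0$ compatibility argument that you (correctly) spell out.
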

\begin{proof}
Differentiating (\ref{2.15}), (\ref{2.17}), (\ref{2.18}) and (\ref{2.19}) with respect to $t$, taking $
  \varphi_{f}=\mathbf{v}, \mathbf{r}=\mathbf{q}, \psi_{f}=p_{f,t},  \varphi_{p}=\mathbf{U}_{t}, w=\xi_{t}, z=k_{1}\xi_{t}+k_{2}\eta_{t}$, we have
\begin{eqnarray}
& -\langle\sigma_{f,t}\mathbf{n}_{f},\mathbf{v}\rangle_{\Gamma}+\mu_{f}\frac{d}{dt}\|\mathbf{D}(\mathbf{v})\|^{2}_{L^{2}(\Omega_{f})}
=(g,p_{f,t})_{\Omega_{f}},\label{2.28}\\
 & -\langle\sigma_{p,t}\mathbf{n}_{p},\mathbf{U}_{t}\rangle_{\Gamma}+\langle p_{p,t}\mathbf{n}_{p},\mathbf{q}\rangle_{\Gamma}+2\mu_{p}\|\mathbf{D}(\mathbf{U}_{t})\|^{2}_{L^{2}(\Omega_{p})}
  +(2k)^{-1}\frac{d}{dt}\|\mathbf{q}\|^{2}_{L^{2}(\Omega_{p})}\nonumber\\
  &+k_{2}\|\eta_{t}\|^{2}_{L^{2}(\Omega_{p})}+k_{3}\|\xi_{t}\|^{2}_{L^{2}(\Omega_{p})}=(s,p_{p,t})_{\Omega_{p}}.\label{2.29}
\end{eqnarray}
Adding (\ref{2.28}) and (\ref{2.29}), we have
\begin{eqnarray}\label{2.31}
\begin{aligned}
  &\frac{\beta}{2}\frac{d}{dt}\|(\mathbf{v}-\mathbf{U}_{t})\cdot\tau\|_{L^{2}(\Gamma)}^{2}+\mu_{f}\frac{d}{dt}\|\mathbf{D}(\mathbf{v})\|^{2}_{L^{2}(\Omega_{f})}
  +(2k)^{-1}\frac{d}{dt}\|\mathbf{q}\|^{2}_{L^{2}(\Omega_{p})}\\
  +2\mu_{p}&\|\mathbf{D}(\mathbf{U}_{t})\|^{2}_{L^{2}(\Omega_{p})}+k_{2}\|\eta_{t}\|^{2}_{L^{2}(\Omega_{p})}+k_{3}\|\xi_{t}\|^{2}_{L^{2}(\Omega_{p})}
  =\frac{d}{dt}[(g,p_{f})_{\Omega_{f}}+(s,p_{p})_{\Omega_{p}}].
\end{aligned}
\end{eqnarray}
Integrating in $t$, we get 
\begin{eqnarray*}
  &&\frac{\beta}{2}\|(\mathbf{v}(t)-\mathbf{U}_{t}(t))\cdot\tau\|_{L^{2}(\Gamma)}^{2}+\mu_{f}\|\mathbf{D}(\mathbf{v}(t))\|^{2}_{L^{2}(\Omega_{f})}
  +(2k)^{-1}\|\mathbf{q}(t)\|^{2}_{L^{2}(\Omega_{p})}\\
  &&+\int_{0}^{t}[2\mu_{p}\|\mathbf{D}(\mathbf{U}_{t})\|^{2}_{L^{2}(\Omega_{p})}+k_{2}\|\eta_{t}\|^{2}_{L^{2}(\Omega_{p})}+k_{3}\|\xi_{t}\|^{2}_{L^{2}(\Omega_{p})}]dt\\
  &=&\beta\|(\mathbf{v}(0)-\mathbf{U}_{t}(0))\cdot\tau\|_{L^{2}(\Gamma)}+\mu_{f}\|\mathbf{D}(\mathbf{v}(0))\|^{2}_{L^{2}(\Omega_{f})}
  +(2k)^{-1}\|\mathbf{q}(0)\|^{2}_{L^{2}(\Omega_{p})}\\
  &&[(g,p_{f}(t)-p_{f}^{0})_{\Omega_{f}}+(s,p_{p}(t)-p_{p}^{0})_{\Omega_{p}}],
\end{eqnarray*}
which implies that (\ref{2.32}) holds.

Differentiating (\ref{2.15}), (\ref{2.16}), (\ref{2.17}),  (\ref{2.19}) and (\ref{2.20}) with respect to $t$, differentiating (\ref{2.18}) with respect $t$ two times, taking $
  \varphi_{f}=\mathbf{v}_{t}, \mathbf{r}=\mathbf{q}_{t}, \psi_{f}=p_{f,t}, \varphi_{p}=\mathbf{U}_{tt}, w=\xi_{t}, z=p_{p,t}=k_{1}\xi_{t}+k_{2}\eta_{t}$, we have
\begin{eqnarray}\label{2.37}
\begin{aligned}
  &\beta\|(\mathbf{v}_{t}-\mathbf{U}_{tt})\cdot\tau\|_{L^{2}(\Gamma)}^{2}+2\mu_{f}\|\mathbf{D}(\mathbf{v}_{t})\|^{2}_{L^{2}(\Omega_{f})}
  +k^{-1}\|\mathbf{q}_{t}\|^{2}_{L^{2}(\Omega_{p})}\\
  +&\mu_{p}\frac{d}{dt}\|\mathbf{D}(\mathbf{U}_{t})\|^{2}_{L^{2}(\Omega_{p})}+\frac{k_{2}}{2}\frac{d}{dt}\|\eta_{t}\|^{2}_{L^{2}(\Omega_{p})}
  +\frac{k_{3}}{2}\frac{d}{dt}\|\xi_{t}\|^{2}_{L^{2}(\Omega_{p})}
  =0.
\end{aligned}
\end{eqnarray}
Integrating (\ref{2.37}) in $t$, we get 
\begin{eqnarray*}
  && \int_{0}^{t}[\beta\|(\mathbf{v}_{t}-\mathbf{U}_{tt})\cdot\tau\|_{L^{2}(\Gamma)}^{2}+2\mu_{f}\|\mathbf{D}(\mathbf{v}_{t})\|^{2}_{L^{2}(\Omega_{f})}
  +k^{-1}\|\mathbf{q}_{t}\|^{2}_{L^{2}(\Omega_{p})}]dt\\
  &&+\mu_{p}\|\mathbf{D}(\mathbf{U}_{t}(t))\|^{2}_{L^{2}(\Omega_{p})}+\frac{k_{2}}{2}\|\eta_{t}(t)\|^{2}_{L^{2}(\Omega_{p})}
  +\frac{k_{3}}{2}\|\xi_{t}(t)\|^{2}_{L^{2}(\Omega_{p})}\\
  &=&\mu_{p}\|\mathbf{D}(\mathbf{U}_{t}(0))\|^{2}_{L^{2}(\Omega_{p})}+\frac{k_{2}}{2}\|\eta_{t}(0)\|^{2}_{L^{2}(\Omega_{p})}
  +\frac{k_{3}}{2}\|\xi_{t}(0)\|^{2}_{L^{2}(\Omega_{p})},
\end{eqnarray*}
which implies that (\ref{2.33}) holds. The proof is complete.
\end{proof}

%
%

\begin{lemm}\label{lemm2.5}
Every weak solution ($\mathbf{v},p_{f},\mathbf{U},\xi,\eta,\mathbf{q},p_{p}$) of the problem (\ref{2.15})-(\ref{2.20}) satisfies the following relations:
\begin{eqnarray}
  \mathbf{C}_{\mathbf{v}}(t)&:=&\langle\mathbf{v}(\cdot,t)\cdot\mathbf{n}_{f},1\rangle_{\partial\Omega_{f}}=(g,1)_{\Omega_{f}},\label{2.66}\\
  \mathbf{C}_{p_{f}}(t)&:=&(p_{f}(\cdot,t),1)_{\Omega_{f}}
  =-\langle\sigma_{f}\mathbf{n}_{f},x\rangle_{\Gamma}+2\mu_{f}\mathbf{C}_{\mathbf{v}}(t)+\langle p_{in}(t)\mathbf{n}_{f},x\rangle_{\Gamma_{f}^{in}}-(\mathbf{f},x)_{\Omega_{f}}, \\
  \mathbf{C}_{\eta}(t)&:=&(\eta(\cdot,t),1)_{\Omega_{p}}=t[(s,1)_{\Omega_{p}}-\mathbf{C}_{\mathbf{q}}]+(\eta(0),1)_{\Omega_{p}}~~~~t\geq0, \\
  \mathbf{C}_{\xi}(t)&:=& (\xi(\cdot,t),1)_{\Omega_{p}}=\frac{1}{2\mu_{p}k_{3}+1}[2\mu_{p}k_{1}\mathbf{C}_{\eta}-(\mathbf{h},x)_{\Omega_{p}}-\langle\sigma_{p}\mathbf{n}_{p},x\rangle_{\Gamma}], \\
  \mathbf{C}_{\mathbf{U}}(t)&:=&\langle\mathbf{U}(\cdot,t)\cdot\mathbf{n}_{p},1\rangle_{\partial\Omega_{p}}=k_{1}\mathbf{C}_{\eta}(t)-k_{3}\mathbf{C}_{\xi}(t), \\
  \mathbf{C}_{p_{p}}(t)&:=&(p_{p}(\cdot,t),1)_{\Omega_{p}}=k_{1}\mathbf{C}_{\eta}(t)+k_{2}\mathbf{C}_{\xi}(t), \\
  \mathbf{C}_{\mathbf{q}}(t)&:=&\langle\mathbf{q}(\cdot,t)\cdot\mathbf{n}_{p},1\rangle_{\partial\Omega_{p}}=\langle k\nabla\mathbf{C}_{p_{p}}\cdot\mathbf{n}_{p},1\rangle_{\partial\Omega_{p}}.\label{2.67}
\end{eqnarray}
\end{lemm}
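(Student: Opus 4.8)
The plan is to derive all seven identities by testing the weak formulation (\ref{2.15})--(\ref{2.20}) with \emph{global} test functions --- the constant function $1$ in each scalar space and the position (coordinate) vector field $\mathbf{x}$ in each vector space --- and then converting every volume integral of a divergence into a boundary flux by the divergence theorem, reading the boundary contributions off the conditions (\ref{1.3})--(\ref{1.12}) and the interface contributions on $\Gamma$ off (\ref{1.13})--(\ref{1.18}). (Strictly, $1$ and $\mathbf{x}$ need not satisfy the essential boundary conditions built into $\mathbf{V}^{f},\mathbf{X}^{p},\mathbf{V}^{p}$; this is handled by working on the strong/distributional form or by a routine lifting/density argument, which I suppress below.)

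I would carry out the steps in the following order. First, $\mathbf{C}_{\mathbf{v}}$: put $\psi_{f}=1$ in (\ref{2.16}) and apply the divergence theorem to $(\nabla\cdot\mathbf{v},1)_{\Omega_{f}}$, which yields $\langle\mathbf{v}\cdot\mathbf{n}_{f},1\rangle_{\partial\Omega_{f}}=(g,1)_{\Omega_{f}}$. Second, $\mathbf{C}_{p_{f}}$: put $\varphi_{f}=\mathbf{x}$ in (\ref{2.15}); since $\mathbf{D}(\mathbf{x})=\mathbf{I}$, the viscous term collapses to $2\mu_{f}(\nabla\cdot\mathbf{v},1)_{\Omega_{f}}=2\mu_{f}\mathbf{C}_{\mathbf{v}}(t)$, the pressure term to a multiple of $(p_{f},1)_{\Omega_{f}}$, the interface term to $-\langle\sigma_{f}\mathbf{n}_{f},\mathbf{x}\rangle_{\Gamma}$, and the loading term to $\langle p_{in}(t)\mathbf{n}_{f},\mathbf{x}\rangle_{\Gamma_{f}^{in}}$ (the $\Gamma_{f}^{out}$ contribution drops out by (\ref{1.4})); solving for $(p_{f},1)_{\Omega_{f}}$ gives the stated formula. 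Third, the poroelastic algebraic identities: put $w=1$ in (\ref{2.18}) and use the divergence theorem to obtain $k_{3}\mathbf{C}_{\xi}(t)+\mathbf{C}_{\mathbf{U}}(t)=k_{1}\mathbf{C}_{\eta}(t)$, which is itself the $\mathbf{C}_{\mathbf{U}}$ identity once $\mathbf{C}_{\xi},\mathbf{C}_{\eta}$ are known; then put $\varphi_{p}=\mathbf{x}$ in (\ref{2.17}), use $\mathbf{D}(\mathbf{x})=\mathbf{I}$ again, and eliminate $\mathbf{C}_{\mathbf{U}}$ with the previous relation to solve for $\mathbf{C}_{\xi}$ in terms of $\mathbf{C}_{\eta}$, $(\mathbf{h},\mathbf{x})_{\Omega_{p}}$ and $\langle\sigma_{p}\mathbf{n}_{p},\mathbf{x}\rangle_{\Gamma}$ (the denominator $2\mu_{p}k_{3}+1$ arising from collecting the $\mathbf{C}_{\xi}$ contributions of the viscous/pressure terms together with the $k_{3}$-coupling). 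Fourth, $\mathbf{C}_{p_{p}}$ follows at once from $p_{p}=k_{1}\xi+k_{2}\eta$ (see (\ref{1.19})) by taking the $L^{2}(\Omega_{p})$ mean.

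It remains to treat the flux identity and the time-dependent one. For $\mathbf{C}_{\mathbf{q}}$ I would invoke Darcy's law in the form (\ref{1.25}), i.e.\ $\mathbf{q}=-k\nabla(k_{1}\xi+k_{2}\eta)=-k\nabla p_{p}$, to rewrite $\langle\mathbf{q}\cdot\mathbf{n}_{p},1\rangle_{\partial\Omega_{p}}$ as the boundary flux of $k\nabla p_{p}$, discarding $\Gamma_{p}^{in}\cup\Gamma_{p}^{out}$ by (\ref{1.11}) and rewriting the part on $\Gamma$ via (\ref{1.13}). For $\mathbf{C}_{\eta}$ I would set $z=1$ in (\ref{2.20}), which after the divergence theorem gives $\frac{d}{dt}\mathbf{C}_{\eta}(t)+\mathbf{C}_{\mathbf{q}}(t)=(s,1)_{\Omega_{p}}$, and then integrate over $[0,t]$; since $\mathbf{f},g,\mathbf{h},s$ are time-independent and $\mathbf{C}_{\mathbf{q}}$ is stationary, this produces the affine-in-$t$ profile $t[(s,1)_{\Omega_{p}}-\mathbf{C}_{\mathbf{q}}]+(\eta(0),1)_{\Omega_{p}}$.

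The main obstacle is twofold. First, the interface integrals $\langle\,\cdot\,\rangle_{\Gamma}$ must be handled consistently: each of (\ref{2.15}), (\ref{2.17}), (\ref{2.19}), (\ref{2.20}) contributes such a term, and one has to invoke the transmission conditions (\ref{1.13})--(\ref{1.18}), in particular the stress matching (\ref{1.16})--(\ref{1.18}) and the normal-flux relation (\ref{1.13}), to see that they combine exactly into the boundary data appearing on the right-hand sides rather than leaving residual interface traces. Second, one must check that $\mathbf{C}_{\mathbf{q}}$ is independent of $t$ --- this is precisely what lets the time integration for $\mathbf{C}_{\eta}$ close into the stated linear-in-$t$ expression. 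Everything else (the divergence theorem and the bookkeeping of $k_{1},k_{2},k_{3}$ via (\ref{1.19})--(\ref{1.20})) is routine.
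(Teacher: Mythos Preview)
Your proposal is correct and follows essentially the same route as the paper: the paper also tests (\ref{2.15})--(\ref{2.20}) with $\varphi_{f}=x$, $\psi_{f}=1$, $\varphi_{p}=x$, $w=1$, $z=1$, records the resulting five scalar identities, and reads off (\ref{2.66})--(\ref{2.67}) from them together with the definition $p_{p}=k_{1}\xi+k_{2}\eta$ and Darcy's law. You are in fact more careful than the paper on two points it leaves implicit --- the admissibility of $1$ and $\mathbf{x}$ as test functions, and the time-independence of $\mathbf{C}_{\mathbf{q}}$ needed to integrate the $\eta$-equation to the linear-in-$t$ form --- but these are elaborations, not a different argument.
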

\begin{proof}
Letting
\begin{gather*}
	\mathbf{C}_{\mathbf{v}}(t) := \langle\mathbf{v}(\cdot,t)\cdot\mathbf{n}_{f},1\rangle_{\partial\Omega_{f}},~\mathbf{C}_{\mathbf{U}}(t) := \langle\mathbf{U}(\cdot,t)\cdot\mathbf{n}_{p},1\rangle_{\partial\Omega_{p}},~\mathbf{C}_{\mathbf{q}}(t) := \langle\mathbf{q}(\cdot,t)\cdot\mathbf{n}_{p},1\rangle_{\partial\Omega_{p}}, \\
	\mathbf{C}_{p_{f}}(t) := (p_{f}(\cdot,t),1)_{\Omega_{f}},\mathbf{C}_{\eta}(t) := (\eta(\cdot,t),1)_{\Omega_{p}},\mathbf{C}_{\xi}(t) := (\xi(\cdot,t),1)_{\Omega_{p}},
	\mathbf{C}_{p_{p}}(t) := (p_{p}(\cdot,t),1)_{\Omega_{p}},
\end{gather*}
and taking $\varphi_{f}=x, \psi_{f}=1, \varphi_{p}=x, z=1, w=1$ in (\ref{2.15})-(\ref{2.20}), 
we have
\begin{eqnarray}
  -\langle\sigma_{f}\mathbf{n}_{f},x\rangle_{\Gamma}+2\mu_{f}\mathbf{C}_{\mathbf{v}}(t)-\mathbf{C}_{p_{f}}(t)+\langle p_{in}(t)\mathbf{n}_{f},x\rangle_{\Gamma_{f}^{in}}
    &=&(\mathbf{f},x)_{\Omega_{f}}, \label{2.60}\\
  \mathbf{C}_{\mathbf{v}}(t)
   &=&(g,1)_{\Omega_{f}},\label{2.61}\\
   -\langle\sigma_{p}\mathbf{n}_{p},x\rangle_{\Gamma}
  +2\mu_{p}\mathbf{C}_{\mathbf{U}}(t)-\mathbf{C}_{\xi}(t)
   &=&(\mathbf{h},x)_{\Omega_{p}},\label{2.62}\\
  k_{3}\mathbf{C}_{\xi}(t)+\mathbf{C}_{\mathbf{U}}(t)&=& k_{1}\mathbf{C}_{\eta}(t),\label{2.63}\\
  \frac{1}{t}[\mathbf{C}_{\eta}(t)-(\eta(0),1)]+\mathbf{C}_{\mathbf{q}}(t)
  &=&(s,1)_{\Omega_{p}},\label{2.64}
\end{eqnarray}
which imply that (\ref{2.66})-(\ref{2.67}) hold. The proof is complete.
\end{proof}

With the help of Lemma \ref{lemn2}, Lemma \ref{lemn1} and Lemma \ref{lemm2.5}, we obtain the following main result.

\begin{thrm}\label{thm200527-1}
Suppose that $\mathbf{v}^{0}\in H^{1}(\Omega_{f}),~\mathbf{U}^{0}\in H^{1}(\Omega_{p}),~p_{p}^{0}\in L^{2}(\Omega_{p}),~\mathbf{f},g\in L^{2}(\Omega_{f}),~\mathbf{h},s\in L^{2}(\Omega_{p})$, for any $t\in(0, T ]$, then there exists a unique weak solution to the problem (\ref{1.1'})-(\ref{1.7'}), likewise, there exists a unique weak solution to the problem (\ref{1.21})-(\ref{1.26}).
\end{thrm}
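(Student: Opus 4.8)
The plan is to prove existence and uniqueness for the reformulated problem \eqref{2.15}--\eqref{2.20} and then transfer the conclusion to \eqref{2.38}--\eqref{2.42} (equivalently, to the original problem). Relations \eqref{1.19}--\eqref{1.20} define a linear bijection between the unknowns of the two systems --- its matrix has determinant a positive multiple of $\alpha^{2}+\lambda_{p}s_{0}$ --- under which \eqref{2.18} is the weak rendering of $\nabla\cdot\mathbf U=k_{1}\eta-k_{3}\xi$ and the remaining equations of the two systems match term by term, with corresponding solution classes. So it suffices to treat \eqref{2.15}--\eqref{2.20}.

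\emph{Uniqueness.} The system is linear, so the difference of two weak solutions with the same data solves \eqref{2.15}--\eqref{2.20} with $\mathbf f=g=\mathbf h=s\equiv0$, $p_{in}\equiv0$ and zero initial values, whence $\xi(0)=\eta(0)=0$ and $\mathbf D(\mathbf U(0))=0$. Lemma \ref{lemn2} applied to this difference gives $\mathbf J(t)+\int_{0}^{t}[2\mu_{f}\|\mathbf D(\mathbf v)\|_{L^{2}(\Omega_{f})}^{2}+k^{-1}\|\mathbf q\|_{L^{2}(\Omega_{p})}^{2}+\beta\|(\mathbf v-\mathbf U_{t})\cdot\tau\|_{L^{2}(\Gamma)}^{2}]\,dt=0$, and since $\mathbf h\equiv0$ the functional $\mathbf J(t)=\mu_{p}\|\mathbf D(\mathbf U(t))\|_{L^{2}(\Omega_{p})}^{2}+\tfrac{k_{2}}{2}\|\eta(t)\|_{L^{2}(\Omega_{p})}^{2}+\tfrac{k_{3}}{2}\|\xi(t)\|_{L^{2}(\Omega_{p})}^{2}\ge0$ ($\mu_{p},k_{2},k_{3}>0$). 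Hence $\mathbf D(\mathbf v)\equiv\mathbf q\equiv\mathbf D(\mathbf U)\equiv0$ and $\eta\equiv\xi\equiv0$; Korn's inequality on $\mathbf V^{f}$ and $\mathbf X^{p}$ (both vanish on a nonempty part of their boundary) forces $\mathbf v\equiv\mathbf U\equiv0$; inserting the interface conditions \eqref{1.13}--\eqref{1.18} and \eqref{1.15} into \eqref{2.15} leaves $(p_{f},\nabla\cdot\varphi_{f})_{\Omega_{f}}=0$ for all $\varphi_{f}\in\mathbf V^{f}$, and as $\partial\Omega_{f}$ is not entirely Dirichlet for $\mathbf V^{f}$ the divergence maps $\mathbf V^{f}$ onto $Q^{f}$, so $p_{f}\equiv0$; finally $p_{p}=k_{1}\xi+k_{2}\eta\equiv0$. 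By the bijection, uniqueness holds for \eqref{2.38}--\eqref{2.42} as well.

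\emph{Existence.} I would use a Faedo--Galerkin argument (Showalter's theory of implicit/degenerate evolution equations gives an alternative route). First recast \eqref{2.15}--\eqref{2.20} monolithically by substituting \eqref{1.16}--\eqref{1.18} and \eqref{1.15}, so the interface terms on $\Gamma$ collapse to the symmetric coupling $\langle p_{p},(\varphi_{f}-\varphi_{p})\cdot\mathbf n\rangle_{\Gamma}+\beta\langle(\mathbf v-\mathbf U_{t})\cdot\tau,(\varphi_{f}-\varphi_{p})\cdot\tau\rangle_{\Gamma}$ exploited in Lemma \ref{lemn2}. Pick nested conforming subspaces of $\mathbf V^{f}\times Q^{f}\times\mathbf X^{p}\times\mathbf V^{p}\times M^{p}\times Q^{p}$ with dense union satisfying the discrete inf--sup conditions for the pairs $(\mathbf V^{f}_{m},Q^{f}_{m})$ and $(\mathbf V^{p}_{m},M^{p}_{m})$, and project the data together with $\mathbf v^{0},\mathbf U^{0},p_{p}^{0}$ (if these lack the smoothness required by Lemma \ref{lma210801-1}, first mollify the data and strip the regularization at the end using the data-dependent bound there). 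In the semidiscrete system only $\eta_{m}$ is differentiated in time; for frozen $\eta_{m}$ the equations \eqref{2.15}--\eqref{2.19} form a square linear system, invertible by coercivity of the strain forms on $\Omega_{f},\Omega_{p}$ and of $k^{-1}(\cdot,\cdot)$ plus the two discrete inf--sup conditions, and eliminating these static unknowns turns \eqref{2.20} into a linear ODE $M_{m}\dot\eta_{m}+\widetilde A_{m}\eta_{m}=\widetilde G_{m}$ with $M_{m}$ symmetric positive definite --- hence a unique global semidiscrete solution. Carrying out the computations from the proofs of Lemma \ref{lemn2} and Lemma \ref{lma210801-1} at the Galerkin level (the test-function choices there, e.g.\ $\varphi_{f}=\mathbf v_{m}$ and $\mathbf r=\mathbf q_{m}$, are admissible) yields $m$-uniform bounds on $\mathbf D(\mathbf v_{m})$, $\mathbf q_{m}$, $\mathbf D(\mathbf U_{m})$, $\xi_{m}$, $\eta_{m}$ and their time derivatives; the discrete inf--sup conditions bound $p_{f,m}$ in $L^{\infty}(0,T;L^{2}(\Omega_{f}))$, \eqref{2.18} controls $\nabla\cdot\mathbf U_{m}$, and \eqref{2.20} controls $\nabla\cdot\mathbf q_{m}$. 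Extracting a weak-$*$ convergent subsequence, using the time-derivative bounds and Aubin--Lions for the strong convergence needed in the $\Gamma$-terms, and invoking linearity of every equation, the limit is a weak solution in the sense of Definition \ref{defn2.2}; continuity of $\eta$ in time recovers $\eta(0)=s_{0}p_{p}^{0}+\alpha\nabla\cdot\mathbf U^{0}$. The bijection then delivers a weak solution of \eqref{2.38}--\eqref{2.42}, which together with uniqueness completes the proof.

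\emph{Main obstacle.} I expect the delicate step to be the $m$-uniform well-posedness of the coupled static Stokes--elasticity--Darcy block \eqref{2.15}--\eqref{2.19}: Lemma \ref{lemn2} controls $\mathbf D(\mathbf v)$, $\mathbf D(\mathbf U)$, $\mathbf q$, $\xi$, $\eta$ but neither $p_{f}$ nor $\nabla\cdot\mathbf U$, so one must combine Korn's inequality, the Stokes inf--sup condition (for which the traction boundary portion removes any zero-mean compatibility constraint, consistently with the mean-value identities of Lemma \ref{lemm2.5}), the $H(\mathrm{div})$ mixed inf--sup condition, and a careful treatment of the symmetric but sign-indefinite interface terms --- all uniformly in $m$ and compatibly with passing to the limit in the traces on $\Gamma$. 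Everything after that is standard compactness and bookkeeping.
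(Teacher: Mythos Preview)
Your proposal is correct and follows essentially the same strategy as the paper: Galerkin approximation plus compactness for existence (the paper simply invokes Lemma~\ref{lemn2} and Lemma~\ref{lemn1} and cites \cite{R.Temam}, omitting all the details you spell out), and the energy law applied to the difference of two solutions for uniqueness. The only minor differences are that the paper runs the uniqueness argument through the energy identity~\eqref{2.44} of Lemma~\ref{lemn1} for the \emph{original} formulation (yielding $\mathbf D(\mathbf U)=\mathbf q=p_{p}=\mathbf D(\mathbf v)=0$ directly) rather than Lemma~\ref{lemn2}, and then appeals to Lemma~\ref{lemm2.5} together with $\mathbf U\in H^{1}(\Omega_{p})$, $\mathbf v\in H^{1}(\Omega_{f})$ to kill $\mathbf U,\mathbf v,p_{f}$, whereas you invoke Korn's inequality and the surjectivity of the divergence on $\mathbf V^{f}$; your route is arguably cleaner and more explicit.
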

\begin{proof}
Note that the energy laws established in Lemma \ref{lemn2}  and Lemma \ref{lemn1} guarantee the required uniform estimates, so it is standard to prove the existence of weak solution by the standard Galerkin method and compactness argument (cf. \cite{R.Temam}), here we omit the details. 

Suppose that there are two group of  weak solutions: $\mathbf{v}_{1},~p_{f,1},~\mathbf{U}_{1},~p_{p,1},~\mathbf{q}_{1}$ and $\mathbf{v}_{2},~p_{f,2},~\mathbf{U}_{2},~p_{p,2},~\mathbf{q}_{2}$. Let $\mathbf{v}=\mathbf{v}_{1}-\mathbf{v}_{2},~p_{f}=p_{f,1}-p_{f,2},~\mathbf{U}=\mathbf{U}_{1}-\mathbf{U}_{2},~p_{p}=p_{p,1}-p_{p,2},~\mathbf{q}=\mathbf{q}_{1}-\mathbf{q}_{2}$.
It suffices to show that $\mathbf{v}\equiv0,~p_{f}\equiv0,~\mathbf{U}\equiv0,~p_{p}\equiv0,~\mathbf{q}\equiv0$. By the linearity of equations, we immediately imply that $\mathbf{v},~p_{f},~\mathbf{U},~p_{p},~\mathbf{q}$~ satisfy (\ref{1.1'})-(\ref{1.7'}) with ~$\mathbf{f}=\mathbf{h}=g=s=p_{in}(t)=0$~and~$\mathbf{v}^{0}=0,~\mathbf{U}^{0}=0,~p_{p}^{0}=0$,~thus~$\mathbf{F}(t)=0,~\mathbf{E}(0)=0$.

Using (\ref{2.44}), we have
\begin{align*}
  \mu_{p}\|\mathbf{D}(\mathbf{U}(t))\|^{2}_{L^{2}(\Omega_{p})}&+\frac{\lambda_{p}}{2}\|\nabla\cdot\mathbf{U}(t)\|^{2}_{L^{2}(\Omega_{p})}
  +\frac{s_{0}}{2}\|p_{p}(t)\|^{2}_{L^{2}(\Omega_{p})}\\+\int_{0}^{t}[2\mu_{f}\|\mathbf{D}(\mathbf{v})\|^{2}_{L^{2}(\Omega_{f})}&+k^{-1}\|\mathbf{q}\|^{2}_{L^{2}(\Omega_{p})}
  +\beta\|(\mathbf{v}-\mathbf{U}_{t})\cdot\tau\|^{2}_{L^{2}(\Gamma)}]dt=0,
\end{align*}
which implies that $\mathbf{D}(\mathbf{U})=0,~\mathbf{q}=0,~p_{p}=0,~\mathbf{D}(\mathbf{v})=0$.

Therefore, we see that $\mathbf{U}=0,~\mathbf{v}=0,~p_{f}=0$ by using Lemma \ref{lemm2.5} and ~$\mathbf{U}\in H^{1}(\Omega_{p}),~\mathbf{v}\in H^{1}(\Omega_{f})$. The proof is complete.
\end{proof}

\section{Fully discrete multiphysics mixed finite element method with Nitsche's technique}\label{sec3}
\subsection{Multiphysics mixed finite element method}
Let $\mathcal{T}_{h}^{f}$ and $\mathcal{T}_{h}^{p}$ be fixed, quasi-uniform meshes defined on the domains $\Omega_{f}$ and $\Omega_{p}$ with the maximum mesh size $h$. We require that $\Omega_{f}$ and $\Omega_{p}$ are polygonal or polyhedral domains and that they conform at the interface~$\Gamma$.
We denote $\mathbf{V}_{h}^{f}\subset\mathbf{V}^{f},~Q_{h}^{f}\subset Q^{f}$ by the finite element spaces
for the velocity and pressure approximation on the fluid domain $\Omega_{f}$, $\mathbf{V}_{h}^{p}\subset\mathbf{V}^{p}, M_{h}^{p}\subset M^{p}, Q_{h}^{p}\subset Q^{p}$ by the spaces for velocity and pressure approximation on the porous matrix $\Omega_{p}$ and $\mathbf{X}_{h}^{p}\subset\mathbf{X}^{p}$ by the approximation spaces for the structure displacement, respectively.

Using the Nitsche's technique to deal with the interface conditions ( cf. \cite{P.Hansbo}), we get
\begin{eqnarray*}
  &-\mathbf{I}_{\Gamma}^{*}
  =-\int_{\Gamma}(\mathbf{n}\cdot\sigma_{f,h}(\mathbf{v}_{h},p_{f,h})\mathbf{n}(\varphi_{f,h}-\varphi_{p,h}-\mathbf{r}_{h})\cdot\mathbf{n}+
  \tau\cdot\sigma_{f,h}(\mathbf{v}_{h},p_{f,h})\mathbf{n}(\varphi_{f,h}-\varphi_{p,h})\cdot\tau)~dx\\
  &+\int_{\Gamma}\gamma_{f}\mu_{f}h^{-1}[(\mathbf{v}_{h}-\partial_{t}\mathbf{U}_{h}-\mathbf{q}_{h})\cdot\mathbf{n}(\varphi_{f,h}-\varphi_{p,h}- \mathbf{r}_{h})\cdot\mathbf{n}+(\mathbf{v}_{h}-\partial_{t}\mathbf{U}_{h})\cdot\tau(\varphi_{f,h}-\varphi_{p,h})\cdot\tau]~dx,
\end{eqnarray*}
where $\gamma_{f}>0$ is a penalty parameter. 

Furthermore, we introduce
\begin{eqnarray*}
  -\mathbf{S}_{\Gamma}^{*,\varsigma}
  =-\int_{\Gamma}(\mathbf{n}\cdot\sigma_{f,h}(\varsigma\varphi_{f,h},-\psi_{f,h})\mathbf{n}(\mathbf{v}_{h}-\partial_{t}\mathbf{U}_{h}-\mathbf{q}_{h})\cdot\mathbf{n}+
  \tau\cdot\sigma_{f,h}(\varsigma\varphi_{f,h},-\psi_{f,h})\mathbf{n}(\mathbf{v}_{h}-\partial_{t}\mathbf{U}_{h})\cdot\tau)~dx,
\end{eqnarray*}
where $\varsigma$ can be chosen $1$, $0$ or $-1$. 

Also, we can accommodate the weak enforcement of the Beavers¨CJoseph¨CSaffman condition (\ref{1.15}) by
\begin{eqnarray*}
  -\mathbf{I}_{\Gamma}^{+}-\mathbf{S}_{\Gamma}^{+,\varsigma}
  &=&-\int_{\Gamma}\mathbf{n}\cdot\sigma_{f,h}(\mathbf{v}_{h},p_{f,h})\mathbf{n}(\varphi_{f,h}-\varphi_{p,h}-\mathbf{r}_{h})\cdot\mathbf{n}~dx\\
  &&-\int_{\Gamma}\mathbf{n}\cdot\sigma_{f,h}(\varsigma\varphi_{f,h},-\psi_{f,h})\mathbf{n}(\mathbf{v}_{h}-\partial_{t}\mathbf{U}_{h}-\mathbf{q}_{h})\cdot\mathbf{n}~dx\\
  &&+\int_{\Gamma}\gamma_{f}\mu_{f}h^{-1}(\mathbf{v}_{h}-\partial_{t}\mathbf{U}_{h}-\mathbf{q}_{h})\cdot\mathbf{n}(\varphi_{f,h}-\varphi_{p,h}- \mathbf{r}_{h})\cdot\mathbf{n}~dx\\
  &&+\int_{\Gamma}\beta(\mathbf{v}_{h}-\partial_{t}\mathbf{U}_{h})\cdot\tau(\varphi_{f,h}-\varphi_{p,h})\cdot\tau~dx.
\end{eqnarray*}

\begin{rem}\label{rem210802-1}
Comparing $\mathbf{I}_{\Gamma}^{+}+\mathbf{S}_{\Gamma}^{+,\varsigma}$ with $\mathbf{I}_{\Gamma}^{*}+\mathbf{S}_{\Gamma}^{*,\varsigma}$, we find that the operators corresponding to (\ref{1.15}) can be seen as a particular form of the more general case that is obtained when no-slip conditions (\ref{1.14}) are enforced weakly, namely $\mathbf{I}_{\Gamma}^{*}+\mathbf{S}_{\Gamma}^{*,\varsigma}$. For this reason, we perform the analysis of the numerical scheme in the latter form.
\end{rem}

The semi-discrete scheme of the problem (\ref{1.21})-(\ref{1.26}) is: find $\mathbf{v}_{h}\in L^{\infty}(0,T;\mathbf{V}^{f}_{h})$, $p_{f,h}\in L^{\infty}(0,T;Q^{f}_{h})$, $\mathbf{U}_{h}\in L^{\infty}(0,T;\mathbf{X}^{p}_{h}), \mathbf{q}_{h}\in L^{\infty}(0,T;\mathbf{V}^{p}_{h}), \xi_{h}\in L^{\infty}(0,T;M^{p}_{h}), \eta_{h}\in L^{\infty}(0,T;Q^{p}_{h})$ such that 
\begin{eqnarray}\label{2.7}
\begin{aligned}
  &(2\mu_{f}\mathbf{D}(\mathbf{v}_{h}),\mathbf{D}(\varphi_{f,h}))_{\Omega_{f}}-(p_{f,h},\nabla\cdot\varphi_{f,h})_{\Omega_{f}}
  +(\nabla\cdot\mathbf{v}_{h},\psi_{f,h})_{\Omega_{f}}\\
  &+2\mu_{p}(\mathbf{D}(\mathbf{U}_{h}),\mathbf{D}(\varphi_{p,h}))_{\Omega_{p}}
  -(\xi_{h},\nabla\cdot\varphi_{p,h})_{\Omega_{p}}+(\nabla\cdot\mathbf{U}_{h},w_{h})_{\Omega_{p}}\\
  &+k_{3}(\xi_{h},w_{h})_{\Omega_{p}}-k_{1}(\eta_{h},w_{h})_{\Omega_{p}}
  +k^{-1}(\mathbf{q}_{h},\mathbf{r}_{h})_{\Omega_{p}}
   -((k_{1}\xi_{h}+k_{2}\eta_{h}),\nabla\cdot \mathbf{r}_{h})_{\Omega_{p}}\\
   &+(\partial_{t}\eta_{h},z_{h})_{\Omega_{p}}+(\nabla\cdot\mathbf{q}_{h},z_{h})_{\Omega_{p}}-(\mathbf{I}_{\Gamma}^{*}+\mathbf{S}_{\Gamma}^{*,\varsigma})
  =\mathcal{F}(t;\varphi_{f,h},\psi_{f,h},\varphi_{p,h},z_{h}),
\end{aligned}
\end{eqnarray}
for any $(\varphi_{f,h}, \psi_{f,h}, \mathbf{r}_{h},w_{h},  z_{h}, \varphi_{p,h})\in\mathbf{V}_{h}^{f}\times Q_{h}^{f}\times\mathbf{V}_{h}^{p}\times M_{h}^{p}\times
Q_{h}^{p}\times\mathbf{X}_{h}^{p}$, where~$\mathcal{F}(t;\varphi_{f,h},\psi_{f,h},\varphi_{p,h},z_{h})=
(\mathbf{f},\varphi_{f,h})_{\Omega_{f}}+\langle-p_{in}(t)\mathbf{n}_{f},\varphi_{f,h}\rangle_{\Gamma_{f}^{in}}
+(g,\psi_{f,h})_{\Omega_{f}}+(\mathbf{h},\varphi_{p,h})_{\Omega_{p}}+(s,z_{h})_{\Omega_{p}}$.

%
%

Let $\Delta t$ denote the time step,~$t_{n}=n\Delta t,~0\leq n\leq N$, and define the
(backward) discrete time derivative by
$d_{t}\mathbf{u}^{n}:=\frac{\mathbf{u}^{n}-\mathbf{u}^{n-1}}{\Delta t}$. Then, the fully multiphysics mixed finite element method for the problem (\ref{1.21})-(\ref{1.26}) is: find $(\mathbf{v}_{h}^{n},p_{f,h}^{n},\mathbf{q}_{h}^{n},\xi_{h}^{n},\eta_{h}^{n},\mathbf{U}_{h}^{n})\in\mathbf{V}_{h}^{f}\times Q_{h}^{f}\times\mathbf{V}_{h}^{p}\times M_{h}^{p}\times
Q_{h}^{p}\times\mathbf{X}_{h}^{p}$ such that 
\begin{eqnarray}\label{2.9}
\begin{aligned}
  &(2\mu_{f}\mathbf{D}(\mathbf{v}_{h}^{n}),\mathbf{D}(\varphi_{f,h}))_{\Omega_{f}}-(p_{f,h}^{n},\nabla\cdot\varphi_{f,h})_{\Omega_{f}}
  +(\nabla\cdot\mathbf{v}_{h}^{n},\psi_{f,h})_{\Omega_{f}}\\
  &+2\mu_{p}(\mathbf{D}(\mathbf{U}_{h}^{n}),\mathbf{D}(\varphi_{p,h}))_{\Omega_{p}}
  -(\xi_{h}^{n},\nabla\cdot\varphi_{p,h})_{\Omega_{p}}+(\nabla\cdot\mathbf{U}_{h}^{n},w_{h})_{\Omega_{p}}\\
  &+k_{3}(\xi_{h}^{n},w_{h})_{\Omega_{p}}-k_{1}(\eta_{h}^{n},w_{h})_{\Omega_{p}}
  +k^{-1}(\mathbf{q}_{h}^{n},\mathbf{r}_{h})_{\Omega_{p}}\\
  & -((k_{1}\xi_{h}^{n}+k_{2}\eta_{h}^{n}),\nabla\cdot \mathbf{r}_{h})_{\Omega_{p}}+(d_{t}\eta_{h}^{n},z_{h})_{\Omega_{p}}+(\nabla\cdot\mathbf{q}_{h}^{n},z_{h})_{\Omega_{p}}-(\mathbf{I}_{\Gamma}^{*}+\mathbf{S}_{\Gamma}^{*,\varsigma})\\
  =& \mathcal{F}(t_{n};\varphi_{f,h},\psi_{f,h},\varphi_{p,h},z_{h}),
\end{aligned}
\end{eqnarray}
for any $(\varphi_{f,h},\psi_{f,h},\mathbf{r}_{h},w_{h},z_{h},\varphi_{p,h})\in\mathbf{V}_{h}^{f}\times Q_{h}^{f}\times\mathbf{V}_{h}^{p}\times M_{h}^{p}\times
Q_{h}^{p}\times\mathbf{X}_{h}^{p}$.

We  also recall the following inverse inequality
\begin{equation}\label{2.11}
  h\|\mathbf{D}(\mathbf{u}_{h})\mathbf{n}\|^{2}_{L^{2}(\Gamma)}\leq C_{TI}\|\mathbf{D}(\mathbf{u}_{h})\|^{2}_{L^{2}(\Omega)},
\end{equation}
where $\mathcal{C}_{TI}$ is a positive constant uniformly upper bounded with respect to the mesh size $h$.

Define
\begin{equation*}
	\mathbf{E}^{n}_{p,h}:=\frac{1}{2}(2\mu_{p}\|\mathbf{D}(\mathbf{U}_{h}^{n})\|^{2}_{L^{2}(\Omega_{p})}+k_{3}\| \xi_{h}^{n}\parallel^{2}_{L^{2}(\Omega_{p})}+k_{2}\| \eta_{h}^{n}\|^{2}_{L^{2}(\Omega_{p})}).
\end{equation*}

\begin{thrm}\label{thm2.1}
For any~$\overline{\epsilon}^{1}_{f},\underline{\epsilon}^{1}_{f}$~ satisfying
\begin{equation*}
  1-(\varsigma+1)\overline{\epsilon}^{1}_{f}C_{TI}-\frac{\underline{\epsilon}^{1}_{f}}{4}>0
\end{equation*}
 provided that~$\gamma_{f}>(\varsigma+1)(\overline{\epsilon}^{1}_{f})^{-1}$,  then there exist constants $0 < c < 1$~ and ~$C > 1$ (uniformly independent of the mesh characteristic size $h$) such that
\begin{eqnarray}\label{2.12}
\begin{aligned}
   & \mathbf{E}^{N}_{p,h}+c\Delta t\sum^{N}_{n=1}[2\mu_{f}\|\mathbf{D}(\mathbf{v}_{h}^{n})\|^{2}_{L^{2}(\Omega_{f})}+k^{-1}\|\mathbf{q}_{h}^{n}\|^{2}_{L^{2}(\Omega_{p})} \\
   & +\frac{\Delta t}{2}(2\mu_{p}\|d_{t}\mathbf{D}(\mathbf{U}_{h}^{n})\|^{2}_{L^{2}(\Omega_{p})}+k_{3}\| d_{t}\xi_{h}^{n}\|^{2}_{L^{2}(\Omega_{p})}+k_{2}\| d_{t}\eta_{h}^{n}\|^{2}_{L^{2}(\Omega_{p})}) \\
   & +\mu_{f}h^{-1}(\|(\mathbf{v}_{h}^{n}-\mathbf{q}_{h}^{n}-d_{t}\mathbf{U}_{h}^{n})\cdot\mathbf{n}\|^{2}_{L^{2}(\Gamma)}
   +\|(\mathbf{v}_{h}^{n}-d_{t}\mathbf{U}_{h}^{n})\cdot\tau\|^{2}_{L^{2}(\Gamma)})] \\
   &\leq  \mathbf{E}^{0}_{p,h}+\Delta t\sum^{N}_{n=1}\frac{C}{\mu_{f}}\|\mathcal{F}(t_{n})\|^{2}
\end{aligned}
\end{eqnarray}
with $c < \min\{(1-(\varsigma+1)\overline{\epsilon}^{1}_{f}C_{TI}-\frac{\underline{\epsilon}^{1}_{f}}{4}),(\gamma_{f}-(\varsigma+1)(\overline{\epsilon}^{1}_{f})^{-1})\}$ and $C > (2\underline{\epsilon}^{1}_{f})^{-1}$.
\end{thrm}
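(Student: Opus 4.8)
The plan is to reproduce, at the fully discrete level, the energy computation of Lemma \ref{lemn2}, replacing ``differentiating the constraint (\ref{2.18}) in $t$'' by a discrete time difference, and then to dispose of the Nitsche operators by the trace inverse inequality (\ref{2.11}) together with Young's inequality.

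\textbf{Step 1 (discrete energy identity).} First I would test (\ref{2.9}) at time level $t_{n}$ with $(\varphi_{f,h},\psi_{f,h},\mathbf{r}_{h},w_{h},z_{h},\varphi_{p,h})=(\mathbf{v}_{h}^{n},p_{f,h}^{n},\mathbf{q}_{h}^{n},w_{h},p_{p,h}^{n},d_{t}\mathbf{U}_{h}^{n})$, where $p_{p,h}^{n}:=k_{1}\xi_{h}^{n}+k_{2}\eta_{h}^{n}$. The two saddle-point pairs $(p_{f,h}^{n},\nabla\cdot\mathbf{v}_{h}^{n})_{\Omega_{f}}$ and $(p_{p,h}^{n},\nabla\cdot\mathbf{q}_{h}^{n})_{\Omega_{p}}$ cancel, and the three terms carrying $w_{h}$ sum to zero: choosing all test components but $w_{h}$ equal to zero in (\ref{2.9}) gives the discrete constraint $(\nabla\cdot\mathbf{U}_{h}^{n},w_{h})_{\Omega_{p}}+k_{3}(\xi_{h}^{n},w_{h})_{\Omega_{p}}=k_{1}(\eta_{h}^{n},w_{h})_{\Omega_{p}}$ for all $w_{h}\in M_{h}^{p}$; subtracting this identity at levels $n$ and $n-1$, dividing by $\Delta t$, and testing with $w_{h}=\xi_{h}^{n}$ yields $(\nabla\cdot d_{t}\mathbf{U}_{h}^{n},\xi_{h}^{n})_{\Omega_{p}}+k_{3}(d_{t}\xi_{h}^{n},\xi_{h}^{n})_{\Omega_{p}}=k_{1}(d_{t}\eta_{h}^{n},\xi_{h}^{n})_{\Omega_{p}}$, which lets me rewrite $-(\xi_{h}^{n},\nabla\cdot d_{t}\mathbf{U}_{h}^{n})_{\Omega_{p}}+(d_{t}\eta_{h}^{n},p_{p,h}^{n})_{\Omega_{p}}$ as $k_{3}(d_{t}\xi_{h}^{n},\xi_{h}^{n})_{\Omega_{p}}+k_{2}(d_{t}\eta_{h}^{n},\eta_{h}^{n})_{\Omega_{p}}$. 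The algebraic identity $(d_{t}a^{n},a^{n})=\frac{1}{2}d_{t}\|a^{n}\|^{2}+\frac{\Delta t}{2}\|d_{t}a^{n}\|^{2}$ applied to $\mathbf{D}(\mathbf{U}_{h}^{n}),\,\xi_{h}^{n},\,\eta_{h}^{n}$ converts these into $d_{t}\mathbf{E}^{n}_{p,h}$ plus the $\frac{\Delta t}{2}(\cdots)$ dissipation appearing in (\ref{2.12}), giving
\begin{align*}
&d_{t}\mathbf{E}^{n}_{p,h}+2\mu_{f}\|\mathbf{D}(\mathbf{v}_{h}^{n})\|^{2}_{L^{2}(\Omega_{f})}+k^{-1}\|\mathbf{q}_{h}^{n}\|^{2}_{L^{2}(\Omega_{p})}\\
&\quad+\frac{\Delta t}{2}\bigl(2\mu_{p}\|d_{t}\mathbf{D}(\mathbf{U}_{h}^{n})\|^{2}_{L^{2}(\Omega_{p})}+k_{3}\|d_{t}\xi_{h}^{n}\|^{2}_{L^{2}(\Omega_{p})}+k_{2}\|d_{t}\eta_{h}^{n}\|^{2}_{L^{2}(\Omega_{p})}\bigr)-(\mathbf{I}_{\Gamma}^{*}+\mathbf{S}_{\Gamma}^{*,\varsigma})=\mathcal{F}(t_{n};\mathbf{v}_{h}^{n},p_{f,h}^{n},d_{t}\mathbf{U}_{h}^{n},p_{p,h}^{n}).
\end{align*}

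\textbf{Step 2 (interface operator).} Next I would expand $-(\mathbf{I}_{\Gamma}^{*}+\mathbf{S}_{\Gamma}^{*,\varsigma})$ for these test functions. With $\varphi_{f,h}-\varphi_{p,h}-\mathbf{r}_{h}=\mathbf{v}_{h}^{n}-d_{t}\mathbf{U}_{h}^{n}-\mathbf{q}_{h}^{n}$ and $\varphi_{f,h}-\varphi_{p,h}=\mathbf{v}_{h}^{n}-d_{t}\mathbf{U}_{h}^{n}$, the ``test'' and ``trial'' normal/tangential jumps coincide, so the penalty part of $\mathbf{I}_{\Gamma}^{*}$ becomes the nonnegative term $\gamma_{f}\mu_{f}h^{-1}\bigl(\|(\mathbf{v}_{h}^{n}-\mathbf{q}_{h}^{n}-d_{t}\mathbf{U}_{h}^{n})\cdot\mathbf{n}\|^{2}_{L^{2}(\Gamma)}+\|(\mathbf{v}_{h}^{n}-d_{t}\mathbf{U}_{h}^{n})\cdot\tau\|^{2}_{L^{2}(\Gamma)}\bigr)$, while the boundary-stress parts of $\mathbf{I}_{\Gamma}^{*}$ and $\mathbf{S}_{\Gamma}^{*,\varsigma}$ combine through $\sigma_{f,h}(\mathbf{v}_{h}^{n},p_{f,h}^{n})+\sigma_{f,h}(\varsigma\mathbf{v}_{h}^{n},-p_{f,h}^{n})=2\mu_{f}(1+\varsigma)\mathbf{D}(\mathbf{v}_{h}^{n})$; that is, the fluid pressure on $\Gamma$ cancels and only the consistency term $2\mu_{f}(1+\varsigma)\int_{\Gamma}\bigl[(\mathbf{n}\cdot\mathbf{D}(\mathbf{v}_{h}^{n})\mathbf{n})(\mathbf{v}_{h}^{n}-\mathbf{q}_{h}^{n}-d_{t}\mathbf{U}_{h}^{n})\cdot\mathbf{n}+(\tau\cdot\mathbf{D}(\mathbf{v}_{h}^{n})\mathbf{n})(\mathbf{v}_{h}^{n}-d_{t}\mathbf{U}_{h}^{n})\cdot\tau\bigr]$ survives.

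\textbf{Step 3 (absorption, load term, summation).} Splitting this consistency term by Young's inequality with weight $\overline{\epsilon}^{1}_{f}$ and using (\ref{2.11}), it is bounded by $(\varsigma+1)\overline{\epsilon}^{1}_{f}C_{TI}\cdot 2\mu_{f}\|\mathbf{D}(\mathbf{v}_{h}^{n})\|^{2}_{L^{2}(\Omega_{f})}+(\varsigma+1)(\overline{\epsilon}^{1}_{f})^{-1}\mu_{f}h^{-1}\bigl(\|(\mathbf{v}_{h}^{n}-\mathbf{q}_{h}^{n}-d_{t}\mathbf{U}_{h}^{n})\cdot\mathbf{n}\|^{2}_{L^{2}(\Gamma)}+\|(\mathbf{v}_{h}^{n}-d_{t}\mathbf{U}_{h}^{n})\cdot\tau\|^{2}_{L^{2}(\Gamma)}\bigr)$; the first piece is absorbed into $2\mu_{f}\|\mathbf{D}(\mathbf{v}_{h}^{n})\|^{2}$ and the second into the penalty term, which is exactly why $\gamma_{f}>(\varsigma+1)(\overline{\epsilon}^{1}_{f})^{-1}$ must be assumed. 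The load $\mathcal{F}(t_{n};\mathbf{v}_{h}^{n},p_{f,h}^{n},d_{t}\mathbf{U}_{h}^{n},p_{p,h}^{n})$ is then treated term by term: $(\mathbf{f},\mathbf{v}_{h}^{n})_{\Omega_{f}}$ and $\langle-p_{in}(t_{n})\mathbf{n}_{f},\mathbf{v}_{h}^{n}\rangle_{\Gamma_{f}^{in}}$ by Korn's inequality, a trace inequality and Young's inequality with weight $\sim\underline{\epsilon}^{1}_{f}$ (this is what leaves the residual coefficient $1-(\varsigma+1)\overline{\epsilon}^{1}_{f}C_{TI}-\frac{\underline{\epsilon}^{1}_{f}}{4}$ on $2\mu_{f}\|\mathbf{D}(\mathbf{v}_{h}^{n})\|^{2}$ and the constant of order $(2\underline{\epsilon}^{1}_{f})^{-1}$); $(g,p_{f,h}^{n})_{\Omega_{f}}$ via the discrete inf--sup stability of the fluid pair, which controls $\|p_{f,h}^{n}\|$ by $\|\mathbf{D}(\mathbf{v}_{h}^{n})\|$ and data; $(s,p_{p,h}^{n})_{\Omega_{p}}=k_{1}(s,\xi_{h}^{n})+k_{2}(s,\eta_{h}^{n})$ by Young's inequality against $\mathbf{E}^{n}_{p,h}$; and, after multiplying by $\Delta t$ and summing, the telescoping sum $\Delta t\sum_{n}(\mathbf{h},d_{t}\mathbf{U}_{h}^{n})_{\Omega_{p}}=(\mathbf{h},\mathbf{U}_{h}^{N}-\mathbf{U}_{h}^{0})_{\Omega_{p}}$ is absorbed into $\mathbf{E}^{N}_{p,h}$ by Korn and Young. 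Summing over $n=1,\dots,N$ so that $\Delta t\sum d_{t}\mathbf{E}^{n}_{p,h}=\mathbf{E}^{N}_{p,h}-\mathbf{E}^{0}_{p,h}$, and closing with the discrete Gronwall inequality for the remaining $\mathbf{E}^{n}_{p,h}$-terms on the right, yields (\ref{2.12}) with $c=\min\{1-(\varsigma+1)\overline{\epsilon}^{1}_{f}C_{TI}-\frac{\underline{\epsilon}^{1}_{f}}{4},\,\gamma_{f}-(\varsigma+1)(\overline{\epsilon}^{1}_{f})^{-1}\}$ and $C$ of order $(2\underline{\epsilon}^{1}_{f})^{-1}$.

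The hard part will be Steps 2--3: correctly expanding the mixed Nitsche operators $\mathbf{I}_{\Gamma}^{*}+\mathbf{S}_{\Gamma}^{*,\varsigma}$ for the chosen test tuple, verifying that the $\varsigma$-symmetrisation makes the interface fluid pressure cancel, and calibrating the Young/trace-inverse constants so that \emph{both} the viscous term $2\mu_{f}\|\mathbf{D}(\mathbf{v}_{h}^{n})\|^{2}$ and the Nitsche penalty term retain strictly positive coefficients — this is precisely what forces the smallness of $\overline{\epsilon}^{1}_{f},\underline{\epsilon}^{1}_{f}$ and the largeness of $\gamma_{f}$ in the statement. A secondary technical point is the control of the fluid pressure contribution $(g,p_{f,h}^{n})$, which is not dominated by the discrete energy and must be handled through the inf--sup condition of the finite element pair.
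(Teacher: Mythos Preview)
Your Steps 1--2 and the absorption part of Step 3 reproduce the paper's argument essentially verbatim: the paper tests (\ref{2.9}) with $\varphi_{f,h}=\mathbf{v}_{h}^{n}$, $\psi_{f,h}=p_{f,h}^{n}$, $\mathbf{r}_{h}=\mathbf{q}_{h}^{n}$, $w_{h}=d_{t}\xi_{h}^{n}$, $z_{h}=k_{1}\xi_{h}^{n}+k_{2}\eta_{h}^{n}$, $\varphi_{p,h}=d_{t}\mathbf{U}_{h}^{n}$ --- taking $w_{h}=d_{t}\xi_{h}^{n}$ directly rather than via your differenced-constraint maneuver, but the two computations are algebraically equivalent --- and bounds the Nitsche consistency term exactly as you do (the pressure cancels through the $\varsigma$-symmetrisation, then Young with weight $\overline{\epsilon}^{1}_{f}$ and the inverse trace inequality (\ref{2.11}) absorb it).

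Where you diverge is the treatment of $\mathcal{F}$. The paper dispatches the entire load in one line, writing $\mathcal{F}(t_{n};\mathbf{v}_{h}^{n},p_{f,h}^{n},d_{t}\mathbf{U}_{h}^{n},p_{p,h}^{n})\leq (2\underline{\epsilon}^{1}_{f}\mu_{f})^{-1}\|\mathcal{F}(t_{n})\|^{2}+\tfrac{\underline{\epsilon}^{1}_{f}}{4}\cdot 2\mu_{f}\|\mathbf{D}(\mathbf{v}_{h}^{n})\|^{2}$ and leaving the norm $\|\mathcal{F}(t_{n})\|$ unspecified; it never invokes the discrete inf--sup condition for $(g,p_{f,h}^{n})$, never telescopes $(\mathbf{h},d_{t}\mathbf{U}_{h}^{n})$, and never applies Gronwall. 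Your term-by-term programme is more honest about what such a bound would actually require, but be aware that the Gronwall closure you propose would insert an exponential-in-$T$ factor that is \emph{not} present in the stated inequality (\ref{2.12}). To match the theorem as written you should follow the paper and leave the right-hand side in the abstract form $\|\mathcal{F}(t_{n})\|^{2}$ rather than unpacking it.
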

\begin{proof}
Taking $\varphi_{f,h}=\mathbf{v}_{h}^{n}, \mathbf{r}_{h}=\mathbf{q}_{h}^{n}, \psi_{f,h}=p_{f,h}^{n}, \varphi_{p,h}=d_{t}\mathbf{U}_{h}^{n}, w_{h}=d_{t}\xi_{h}^{n}, z_{h}=k_{1}\xi_{h}^{n}+k_{2}\eta_{h}^{n}$ in (\ref{2.9}), we have
\begin{eqnarray*}
   &&   (2\mu_{f}\mathbf{D}(\mathbf{v}_{h}^{n}),\mathbf{D}(\mathbf{v}_{h}^{n}))_{\Omega_{f}}-(p_{f,h}^{n},\nabla\cdot\mathbf{v}_{h}^{n}))_{\Omega_{f}}
  +(\nabla\cdot\mathbf{v}_{h}^{n},p_{f,h}^{n})_{\Omega_{f}}\\
  &&+k^{-1}(\mathbf{q}_{h}^{n},\mathbf{q}_{h}^{n})_{\Omega_{p}}-((k_{1}\xi_{h}^{n}+k_{2}\eta_{h}^{n}),\nabla\cdot \mathbf{q}_{h}^{n})_{\Omega_{p}}+(\nabla\cdot\mathbf{q}_{h}^{n},(k_{1}\xi_{h}^{n}+k_{2}\eta_{h}^{n}))_{\Omega_{p}}\\
  &&\quad= 2\mu_{f}\|\mathbf{D}(\mathbf{v}_{h}^{n})\|^{2}_{L^{2}(\Omega_{f})}+k^{-1}\|\mathbf{q}_{h}^{n}\|^{2}_{L^{2}(\Omega_{p})}.
\end{eqnarray*}
The interface terms for the coupling between the fluid and the structure can be bounded as follows
\begin{eqnarray}\label{eq210802-2}
\begin{aligned}
   &-\int_{\Gamma}\mathbf{n}\cdot\sigma_{f,h}(\mathbf{v}_{h}^{n},p_{f,h}^{n})\mathbf{n}(\mathbf{v}_{h}^{n}-\mathbf{q}_{h}^{n}-d_{t}\mathbf{U}_{h}^{n})\cdot\mathbf{n}~dx -\int_{\Gamma}\mathbf{n}\cdot\sigma_{f,h}(\varsigma\mathbf{v}_{h}^{n},-p_{f,h}^{n})\mathbf{n}(\mathbf{v}_{h}^{n}-d_{t}\mathbf{U}_{h}^{n}-\mathbf{q}_{h}^{n})\cdot\mathbf{n}~dx\\
   &=-(1+\varsigma)\int_{\Gamma}\mathbf{n}\cdot(2\mu_{f}\mathbf{D}(\mathbf{v}_{h}^{n}))\mathbf{n}(\mathbf{v}_{h}^{n}-d_{t}\mathbf{U}_{h}^{n}-\mathbf{q}_{h}^{n})\cdot\mathbf{n}~dx\\
   &\leq 2\mu_{f}(1+\varsigma)\|\mathbf{D}(\mathbf{v}_{h}^{n})\mathbf{n}\|_{L^{2}(\Gamma)}
   \|(\mathbf{v}_{h}^{n}-d_{t}\mathbf{U}_{h}^{n}-\mathbf{q}_{h}^{n})\cdot\mathbf{n}\|_{L^{2}(\Gamma)} \\
   &\leq \mu_{f}(1+\varsigma)\overline{\epsilon}^{1}_{f}C_{TI}\|\mathbf{D}(\mathbf{v}_{h}^{n})\|_{L^{2}(\Omega_{f})}^{2}+\mu_{f}(1+\varsigma)(\overline{\epsilon}^{1}_{f}h)^{-1}
   \|(\mathbf{v}_{h}^{n}-d_{t}\mathbf{U}_{h}^{n}-\mathbf{q}_{h}^{n})\cdot\mathbf{n}\|_{L^{2}(\Gamma)}^{2}.
\end{aligned}
\end{eqnarray}

Also, for the structure problem, we have
\begin{eqnarray*}
  && 2\mu_{p}(\mathbf{D}(\mathbf{U}_{h}^{n}),d_{t}\mathbf{D}(\mathbf{U}_{h}^{n}))_{\Omega_{p}}-(\xi_{h}^{n},d_{t}\nabla\cdot\mathbf{U}_{h}^{n})_{\Omega_{p}}
  +(d_{t}\eta_{h}^{n},k_{1}\xi_{h}^{n}+k_{2}\eta_{h}^{n})_{\Omega_{p}} \\
  &&+(\nabla\cdot\mathbf{U}_{h}^{n},d_{t}\xi_{h}^{n})_{\Omega_{p}}+k_{3}(\xi_{h}^{n},d_{t}\xi_{h}^{n})_{\Omega_{p}}-k_{1}(\eta_{h}^{n},d_{t}\xi_{h}^{n})_{\Omega_{p}}\\
  &&= 2\mu_{p}(\mathbf{D}(\mathbf{U}_{h}^{n}),d_{t}\mathbf{D}(\mathbf{U}_{h}^{n}))_{\Omega_{p}}-(\xi_{h}^{n},d_{t}(k_{1}\eta_{h}^{n}-k_{3}\xi_{h}^{n}))_{\Omega_{p}}
  +(d_{t}\eta_{h}^{n},k_{1}\xi_{h}^{n}+k_{2}\eta_{h}^{n})_{\Omega_{p}} \\
  &&= d_{t}\mathbf{E}_{p,h}^{n}+\frac{\Delta t}{2}(2\mu_{p}\|d_{t}\mathbf{D}(\mathbf{U}_{h}^{n})\|^{2}_{L^{2}(\Omega_{p})}+k_{3}\| d_{t}\xi_{h}^{n}\|^{2}_{L^{2}(\Omega_{p})}+k_{2}\|d_{t}\eta_{h}^{n}\|^{2}_{L^{2}(\Omega_{p})}).
\end{eqnarray*}

Summing with respect to the time index~$n = 1,\cdots,N$~and multiplying by~$\Delta t$, we get
\begin{eqnarray}\label{2.13}
\begin{aligned}
   & \mathbf{E}^{N}_{p,h}+\Delta t\sum^{N}_{n=1}[2\mu_{f}(1-(\varsigma+1)\overline{\epsilon}^{1}_{f}\mathcal{C}_{TI}-\frac{\underline{\epsilon}^{1}_{f}}{4})\|\mathbf{D}(\mathbf{v}_{h}^{n})\|^{2}_{L^{2}(\Omega_{f})}
   +k^{-1}\|\mathbf{q}_{h}^{n}\|^{2}_{L^{2}(\Omega_{p})} \\
   & +\frac{\Delta t}{2}(2\mu_{p}\|d_{t}\mathbf{D}(\mathbf{U}_{h}^{n})\|^{2}_{L^{2}(\Omega_{p})}+k_{3}\| d_{t}\xi_{h}^{n}\|^{2}_{L^{2}(\Omega_{p})}+k_{2}\| d_{t}\eta_{h}^{n}\|^{2}_{L^{2}(\Omega_{p})}) \\
   & +(\gamma_{f}-(\varsigma+1)(\overline{\epsilon}^{1}_{f})^{-1}) \mu_{f}h^{-1}(\|(\mathbf{v}_{h}^{n}-\mathbf{q}_{h}^{n}-d_{t}\mathbf{U}_{h}^{n})\cdot\mathbf{n}\|^{2}_{L^{2}(\Gamma)}\\
   &+\|(\mathbf{v}_{h}^{n}-d_{t}\mathbf{U}_{h}^{n})\cdot\tau\|^{2}_{L^{2}(\Gamma)})] \\
   &\leq  \mathbf{E}^{0}_{p,h}+\Delta t\sum^{N}_{n=1}(2\underline{\epsilon}^{1}_{f}\mu_{f})^{-1}\|\mathcal{F}(t_{n})\|^{2},
\end{aligned}
\end{eqnarray}
which implies that (\ref{2.12}) holds. The proof is complete.
\end{proof}


Next, we design a loosely-coupled time-stepping method
to decouple the problem (\ref{2.9}) into three sub-problems at each time step. The loosely-coupled time-stepping algorithm is:

Step 1: Given $\mathbf{v}_{h}^{n-1},p_{f,h}^{n-1},\eta_{h}^{n-1},\mathbf{q}_{h}^{n-1}\in\mathbf{V}_{h}^{f}\times Q_{h}^{f}\times Q_{h}^{p}\times\mathbf{V}_{h}^{p}$, solve $\mathbf{U}_{h}^{n},\xi_{h}^{n}\in\mathbf{X}_{h}^{p}\times M_{h}^{p}$ such that
\begin{eqnarray}\label{3.5}
  &&2\mu_{p}(\mathbf{D}(\mathbf{U}_{h}^{n}),\mathbf{D}(\varphi_{p,h}))_{\Omega_{p}}-(\xi_{h}^{n},\nabla\cdot\varphi_{p,h})_{\Omega_{p}}+(\nabla\cdot\mathbf{U}_{h}^{n},
  w_{h}) _{\Omega_{p}}+k_{3}(\xi_{h}^{n},w_{h})_{\Omega_{p}}\nonumber\\
  &&+\int_{\Gamma}\gamma_{f}\mu_{f}h^{-1}d_{t}\mathbf{U}_{h}^{n}\cdot\tau_{p}\varphi_{p,h}\cdot\tau_{p}~dx
  +\int_{\Gamma}\gamma_{f}\mu_{f}h^{-1}d_{t}\mathbf{U}_{h}^{n}\cdot\mathbf{n}_{p}\varphi_{p,h}\cdot\mathbf{n}_{p}~dx  \nonumber\\
  &=&-\int_{\Gamma}\mathbf{n}_{p}\cdot\sigma_{f,h}^{n-1}\mathbf{n}_{p}(-\varphi_{p,h})\cdot\mathbf{n}_{p}~dx
  -\int_{\Gamma}\tau_{p}\cdot\sigma_{f,h}^{n-1}\mathbf{n}_{p}(-\varphi_{p,h})\cdot\tau_{p}~dx\\
  &&+\int_{\Gamma}\gamma_{f}\mu_{f}h^{-1}\mathbf{v}_{h}^{n-1}\cdot\tau_{p}\varphi_{p,h}\cdot\tau_{p}~dx+\int_{\Gamma}\gamma_{f}\mu_{f}h^{-1}(\mathbf{v}_{h}^{n-1}-\mathbf{q}_{h}^{n-1})
  \cdot\mathbf{n}_{p}\varphi_{p,h}\cdot\mathbf{n}_{p}~dx\nonumber\\
  &&+(\mathbf{h},\varphi_{p,h})_{\Omega_{p}}+k_{1}(\eta_{h}^{n-1},w_{h})_{\Omega_{p}}.\nonumber
\end{eqnarray}

Step 2: Given~$\mathbf{v}_{h}^{n-1},p_{f,h}^{n-1},\xi_{h}^{n},\mathbf{U}_{h}^{n-1}\in\mathbf{V}_{h}^{f}\times Q_{h}^{f}\times M_{h}^{p}\times\mathbf{X}_{h}^{p}$, solve $\mathbf{q}_{h}^{n},\eta_{h}^{n}
\in\mathbf{V}_{h}^{p}\times Q_{h}^{p}$~such that
\begin{eqnarray}\label{3.6}
  &&-(k_{2}\eta_{h}^{n},\nabla\cdot \mathbf{r}_{h})_{\Omega_{p}}+k^{-1}(\mathbf{q}_{h}^{n},\mathbf{r}_{h})_{\Omega_{p}}+(d_{t}\eta_{h}^{n},z_{h})_{\Omega_{p}}+(\nabla\cdot\mathbf{q}_{h}^{n},z_{h})_{\Omega_{p}}\nonumber\\
  &&+\int_{\Gamma}\gamma_{f}\mu_{f}h^{-1}\mathbf{q}_{h}^{n}\cdot\mathbf{n}_{p}\mathbf{r}_{h}\cdot\mathbf{n}_{p}~dx
  +s_{f,q}(d_{t}\mathbf{q}_{h}\cdot\mathbf{n}_{p},\mathbf{r}_{h}\cdot\mathbf{n}_{p})  \nonumber\\
  &=&(k_{1}\xi_{h}^{n},\nabla\cdot \mathbf{r}_{h})_{\Omega_{p}}+\int_{\Gamma}\gamma_{f}\mu_{f}h^{-1}(\mathbf{v}_{h}^{n-1}-d_{t}\mathbf{U}_{h}^{n-1})\cdot\mathbf{n}_{p}\mathbf{r}_{h}\cdot\mathbf{n}_{p}~dx\\
  &&+\int_{\Gamma}\mathbf{n}_{p}\cdot\sigma_{f,h}^{n-1}\mathbf{n}_{p}\mathbf{r}_{h}\cdot\mathbf{n}_{p}~dx+(s,z_{h})_{\Omega_{p}}.\nonumber
\end{eqnarray}

Step 3: Solve $\mathbf{v}_{h}^{n},p_{f,h}^{n}\in\mathbf{V}_{h}^{f}\times Q_{h}^{f}$~such that
\begin{eqnarray}
  &&(2\mu_{f}\mathbf{D}(\mathbf{v}_{h}^{n}),\mathbf{D}(\varphi_{f,h}))_{\Omega_{f}}-(p_{f,h}^{n},\nabla\cdot\varphi_{f,h})_{\Omega_{f}}
  +(\nabla\cdot\mathbf{v}_{h}^{n},\psi_{f,h})_{\Omega_{f}}\nonumber\\
  &&+s_{f,p}(d_{t}p_{f,h},\psi_{f,h})
  +s_{f,v}(d_{t}\mathbf{v}_{h}^{n}\cdot\mathbf{n}_{f},\varphi_{f,h}\cdot\mathbf{n}_{f})\nonumber\\
  &&-\int_{\Gamma}\sigma_{f,h}(\varsigma\varphi_{f,h},-\psi_{f,h})\mathbf{n}_{f}\cdot\mathbf{v}_{h}^{n}~dx
  +\int_{\Gamma}\gamma_{f}\mu_{f}h^{-1}\mathbf{v}_{h}^{n}\cdot\varphi_{f,h}~dx\nonumber\\
  &=& \int_{\Gamma}\sigma_{f,h}^{n-1}\mathbf{n}_{f}\cdot\varphi_{f,h}~dx
  -\int_{\Gamma}\sigma_{f,h}(\varsigma\varphi_{f,h},-\psi_{p,h})\mathbf{n}_{f}d_{t}\mathbf{U}_{h}^{n}~dx\nonumber\\
  &&-\int_{\Gamma}\mathbf{n}_{f}\cdot\sigma_{f,h}(\varsigma\varphi_{f,h},-\psi_{f,h})\mathbf{n}_{f}\mathbf{q}_{h}^{n}\cdot\mathbf{n}_{f}~dx\nonumber\\
  &&+\int_{\Gamma}\gamma_{f}\mu_{f}h^{-1}d_{t}\mathbf{U}_{h}^{n}\cdot\varphi_{f,h}~dx
  +\int_{\Gamma}\gamma_{f}\mu_{f}h^{-1}\mathbf{q}_{h}^{n}\cdot\mathbf{n}_{f}\varphi_{f,h}\cdot\mathbf{n}_{f}~dx
   \nonumber\\
  &&+(\mathbf{f},\varphi_{f,h})_{\Omega_{f}}+\langle-p_{in}(t)\mathbf{n}_{f},\varphi_{f,h}\rangle_{\Gamma_{f}^{in}}+(g,\psi_{f,h})_{\Omega_{f}}.\label{3.7}
\end{eqnarray}

\begin{rem}\label{rem200527-1}
%
%
%

The stabilization terms $s_{f,v}, s_{f,q}$ (cf. \cite{R.Zakerzadeh}) can be chosen by
\begin{gather*}
  s_{f,q}(d_{t}\mathbf{q}_{h}\cdot\mathbf{n},\mathbf{r}_{h}\cdot\mathbf{n}):=\gamma'_{stab}\gamma_{f}\mu_{f}\frac{\Delta t}{h}\int_{\Gamma}d_{t}\mathbf{q}_{h}^{n}\cdot\mathbf{n}\mathbf{r}_{h}\cdot\mathbf{n}~dx,\\
  s_{f,v}(d_{t}\mathbf{v}_{h}^{n}\cdot\mathbf{n},\varphi_{f,h}\cdot\mathbf{n}):=\gamma'_{stab}\gamma_{f}\mu_{f}\frac{\Delta t}{h}\int_{\Gamma}d_{t}\mathbf{v}_{h}^{n}\cdot\mathbf{n}\varphi_{f,h}\cdot\mathbf{n}~dx,
\end{gather*}
and $s_{f,p}(d_{t}p_{f,h},\psi_{f,h})$ is a stabilization term (cf \cite{E.Burman}) with
\begin{equation*}
  s_{f,p}(d_{t}p_{f,h},\psi_{f,h}):=\gamma_{stab}\frac{h\Delta t}{\gamma_{f}\mu_{f}}\int_{\Gamma}d_{t}p_{f,h}^{n}\psi_{f,h}~dx.
\end{equation*}
\end{rem}

\subsection{Stability analysis}

Following \cite{E.Burman} and using Theorem \ref{thm2.1}, we can easily give the stability analysis of the explicit scheme.

\begin{lemm}\label{lemm4.2}
For any~$\underline{\epsilon}^{1}_{f},\overline{\epsilon}^{1}_{f}>0$, we have
\begin{eqnarray}\label{4.2}
\begin{aligned}
   & \mathbf{E}^{N}_{p,h}+\Delta t\frac{\gamma_{stab}}{2}\frac{h}{\gamma_{f}\mu_{f}}\| p_{f,h}^{N}\|_{L^{2}(\Gamma)}^{2}+\Delta t\frac{\gamma'_{stab}}{2}\frac{\gamma_{f}\mu_{f}}{h}(\|\mathbf{v}_{h}^{N}\cdot\mathbf{n}_{f}\|_{L^{2}(\Gamma)}^{2}
   +\|\mathbf{q}_{h}^{N}\cdot\mathbf{n}_{p}\|_{L^{2}(\Gamma)}^{2})\\
   &+\Delta t\sum^{N}_{n=1}[2\mu_{f}(1-(\varsigma+1)\overline{\epsilon}^{1}_{f}C_{TI}-\frac{\underline{\epsilon}^{1}_{f}}{4})\|\mathbf{D}(\mathbf{v}_{h}^{n})\|^{2}_{L^{2}(\Omega_{f})}
   +k^{-1}\|\mathbf{q}_{h}^{n}\|^{2}_{L^{2}(\Omega_{p})} \\
   & +\frac{\Delta t}{2}(2\mu_{p}\| d_{t}\mathbf{D}(\mathbf{U}_{h}^{n})\|^{2}_{L^{2}(\Omega_{p})}+k_{3}\| d_{t}\xi_{h}^{n}\|^{2}_{L^{2}(\Omega_{p})}+k_{2}\| d_{t}\eta_{h}^{n}\|^{2}_{L^{2}(\Omega_{p})}) \\
   & +(\gamma_{f}-(\varsigma+1)(\overline{\epsilon}^{1}_{f})^{-1}) \mu_{f}h^{-1}(\|(\mathbf{v}_{h}^{n}-\mathbf{q}_{h}^{n}-d_{t}\mathbf{U}_{h}^{n})\cdot\mathbf{n}\|^{2}_{L^{2}(\Gamma)}
   +\|(\mathbf{v}_{h}^{n}-d_{t}\mathbf{U}_{h}^{n})\cdot\tau\|^{2}_{L^{2}(\Gamma)}) \\
   &+\frac{\gamma_{stab}}{2}\frac{h}{\gamma_{f}\mu_{f}}\| p_{f,h}^{n}-p_{f,h}^{n-1}\|_{L^{2}(\Gamma)}^{2}\\
   &+\frac{\gamma'_{stab}}{2}\frac{\gamma_{f}\mu_{f}}{h}(\|(\mathbf{v}_{h}^{n}-\mathbf{v}_{h}^{n-1})\cdot\mathbf{n}_{f}\|_{L^{2}(\Gamma)}^{2}
   +\|(\mathbf{q}_{h}^{n}-\mathbf{q}_{h}^{n-1})\cdot\mathbf{n}_{p}\|_{L^{2}(\Gamma)}^{2})]\\
   \leq&  \mathbf{E}^{0}_{p,h}+\Delta t\sum^{N}_{n=1}[\mathcal{T}_{1,a}^{n}+\mathcal{T}_{1,b}^{n}+\mathcal{T}_{1,c}^{n}+\mathcal{T}_{2}^{n}+\mathcal{T}_{3}^{n}
   +\mathcal{T}_{4}^{n}](\mathbf{v}_{h}^{n},\mathbf{q}_{h}^{n},d_{t}\mathbf{U}_{h}^{n},\xi_{h}^{n})\\
   &+\Delta t\frac{\gamma_{stab}}{2}\frac{h}{\gamma_{f}\mu_{f}}\|p_{f,h}^{0}\|_{L^{2}(\Gamma)}^{2}+\Delta t\frac{\gamma'_{stab}}{2}\frac{\gamma_{f}\mu_{f}}{h}(\|\mathbf{v}_{h}^{0}\cdot\mathbf{n}_{f}\|_{L^{2}(\Gamma)}^{2}
   +\|\mathbf{q}_{h}^{0}\cdot\mathbf{n}_{p}\|_{L^{2}(\Gamma)}^{2})\\
   &+\Delta t\sum^{N}_{n=1}(2\underline{\epsilon}^{1}_{f}\mu_{f})^{-1}\|\mathcal{F}(t_{n})\|^{2}.
\end{aligned}
\end{eqnarray}
\end{lemm}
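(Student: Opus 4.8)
The plan is to replay the energy argument of Theorem~\ref{thm2.1}, but now testing the three decoupled sub-problems (\ref{3.5}), (\ref{3.6}) and (\ref{3.7}) separately and then adding the resulting identities, rather than testing the monolithic scheme (\ref{2.9}). Concretely I take $\varphi_{p,h}=d_{t}\mathbf{U}_{h}^{n}$, $w_{h}=d_{t}\xi_{h}^{n}$ in (\ref{3.5}); $\mathbf{r}_{h}=\mathbf{q}_{h}^{n}$, $z_{h}=k_{1}\xi_{h}^{n}+k_{2}\eta_{h}^{n}$ in (\ref{3.6}); and $\varphi_{f,h}=\mathbf{v}_{h}^{n}$, $\psi_{f,h}=p_{f,h}^{n}$ in (\ref{3.7}). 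Up to the time-lag corrections described below, the displacement/pseudo-pressure block collapses exactly as in the proof of Theorem~\ref{thm2.1} into $d_{t}\mathbf{E}^{n}_{p,h}+\frac{\Delta t}{2}\big(2\mu_{p}\|d_{t}\mathbf{D}(\mathbf{U}_{h}^{n})\|^{2}_{L^{2}(\Omega_{p})}+k_{3}\|d_{t}\xi_{h}^{n}\|^{2}_{L^{2}(\Omega_{p})}+k_{2}\|d_{t}\eta_{h}^{n}\|^{2}_{L^{2}(\Omega_{p})}\big)$, the Stokes and Darcy bilinear forms produce $2\mu_{f}\|\mathbf{D}(\mathbf{v}_{h}^{n})\|^{2}_{L^{2}(\Omega_{f})}+k^{-1}\|\mathbf{q}_{h}^{n}\|^{2}_{L^{2}(\Omega_{p})}$, and the pressure--divergence pairings cancel in pairs.

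Since the scheme is loosely coupled, the interface couplings and a few constitutive couplings in (\ref{3.5})--(\ref{3.7}) are carried at the previous time level: $\sigma_{f,h}^{n-1}$ in place of $\sigma_{f,h}^{n}$, the quantities $\mathbf{v}_{h}^{n-1},\mathbf{q}_{h}^{n-1},d_{t}\mathbf{U}_{h}^{n-1}$ in place of their time-$n$ counterparts in the Nitsche terms, and $\eta_{h}^{n-1}$ (resp.\ $\mathbf{U}_{h}^{n-1}$) in the constitutive relation of (\ref{3.5}) (resp.\ (\ref{3.6})). I would substitute $a^{n-1}=a^{n}-\Delta t\,d_{t}a^{n}$ for each such lagged quantity. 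The time-$n$ parts then reassemble into exactly the interface/constitutive contribution of the monolithic proof, namely the Nitsche consistency term $-(1+\varsigma)\int_{\Gamma}\mathbf{n}\cdot(2\mu_{f}\mathbf{D}(\mathbf{v}_{h}^{n}))\mathbf{n}\,(\mathbf{v}_{h}^{n}-\mathbf{q}_{h}^{n}-d_{t}\mathbf{U}_{h}^{n})\cdot\mathbf{n}$ together with the positive penalty terms $\gamma_{f}\mu_{f}h^{-1}\big(\|(\mathbf{v}_{h}^{n}-\mathbf{q}_{h}^{n}-d_{t}\mathbf{U}_{h}^{n})\cdot\mathbf{n}\|^{2}_{L^{2}(\Gamma)}+\|(\mathbf{v}_{h}^{n}-d_{t}\mathbf{U}_{h}^{n})\cdot\tau\|^{2}_{L^{2}(\Gamma)}\big)$; the consistency term is dominated by the trace--inverse inequality (\ref{2.11}) and Young's inequality, leaving the factors $1-(\varsigma+1)\overline{\epsilon}^{1}_{f}C_{TI}-\frac{\underline{\epsilon}^{1}_{f}}{4}$ in front of $\|\mathbf{D}(\mathbf{v}_{h}^{n})\|^{2}_{L^{2}(\Omega_{f})}$ and $\gamma_{f}-(\varsigma+1)(\overline{\epsilon}^{1}_{f})^{-1}$ in front of the penalty terms, exactly as in (\ref{2.13}). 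The $\Delta t$-order remainders produced by the substitutions are absorbed by the stabilization operators $s_{f,p},s_{f,v},s_{f,q}$ of Remark~\ref{rem200527-1}: testing these with the same test functions and applying the discrete summation-by-parts identity $\Delta t\sum_{n=1}^{N}(d_{t}a^{n},a^{n})_{L^{2}(\Gamma)}=\tfrac12\|a^{N}\|^{2}_{L^{2}(\Gamma)}-\tfrac12\|a^{0}\|^{2}_{L^{2}(\Gamma)}+\tfrac12\sum_{n=1}^{N}\|a^{n}-a^{n-1}\|^{2}_{L^{2}(\Gamma)}$ for $a^{n}\in\{p_{f,h}^{n},\,\mathbf{v}_{h}^{n}\cdot\mathbf{n}_{f},\,\mathbf{q}_{h}^{n}\cdot\mathbf{n}_{p}\}$ produces the telescoped boundary terms $\Delta t\,\tfrac{\gamma_{stab}}{2}\tfrac{h}{\gamma_{f}\mu_{f}}\|p_{f,h}^{N}\|^{2}_{L^{2}(\Gamma)}$ and $\Delta t\,\tfrac{\gamma'_{stab}}{2}\tfrac{\gamma_{f}\mu_{f}}{h}(\|\mathbf{v}_{h}^{N}\cdot\mathbf{n}_{f}\|^{2}_{L^{2}(\Gamma)}+\|\mathbf{q}_{h}^{N}\cdot\mathbf{n}_{p}\|^{2}_{L^{2}(\Gamma)})$, together with the jump seminorms $\|p_{f,h}^{n}-p_{f,h}^{n-1}\|^{2}_{L^{2}(\Gamma)}$, $\|(\mathbf{v}_{h}^{n}-\mathbf{v}_{h}^{n-1})\cdot\mathbf{n}_{f}\|^{2}_{L^{2}(\Gamma)}$, $\|(\mathbf{q}_{h}^{n}-\mathbf{q}_{h}^{n-1})\cdot\mathbf{n}_{p}\|^{2}_{L^{2}(\Gamma)}$ that appear on the left-hand side of (\ref{4.2}); all remaining cross terms are collected into the functionals $\mathcal{T}_{1,a}^{n},\mathcal{T}_{1,b}^{n},\mathcal{T}_{1,c}^{n},\mathcal{T}_{2}^{n},\mathcal{T}_{3}^{n},\mathcal{T}_{4}^{n}$ on the right. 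The load $\mathcal{F}(t_{n};\mathbf{v}_{h}^{n},p_{f,h}^{n},d_{t}\mathbf{U}_{h}^{n},k_{1}\xi_{h}^{n}+k_{2}\eta_{h}^{n})$ is handled by Korn, Poincaré, trace inequalities and Young's inequality exactly as in Theorem~\ref{thm2.1}, yielding the $(2\underline{\epsilon}^{1}_{f}\mu_{f})^{-1}\|\mathcal{F}(t_{n})\|^{2}$ term. Multiplying the assembled step-$n$ identity by $\Delta t$ and summing over $n$, so that $\Delta t\sum_{n}d_{t}\mathbf{E}^{n}_{p,h}=\mathbf{E}^{N}_{p,h}-\mathbf{E}^{0}_{p,h}$, gives (\ref{4.2}).

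The main obstacle is the interface bookkeeping. One must verify that the $\sigma_{f,h}^{n-1}$-contributions scattered across (\ref{3.5}), (\ref{3.6}) and (\ref{3.7}) pair up so that, at the time-$n$ level, the cancellations and the assembly into the monolithic Nitsche terms of Theorem~\ref{thm2.1} are reproduced, and that every $\Delta t$-order remainder generated by the substitutions $a^{n-1}=a^{n}-\Delta t\,d_{t}a^{n}$ is genuinely a boundary term that is either controlled by the stabilization seminorms on the left of (\ref{4.2}) (for $\gamma_{stab},\gamma'_{stab}$ large enough) or harmlessly absorbed into the $\mathcal{T}_{i}^{n}$; carrying the sign $\varsigma\in\{-1,0,1\}$ correctly through all of these manipulations is the delicate point. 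Once these identifications are complete, (\ref{4.2}) follows from (\ref{2.11}), Young's inequality, the parameter restrictions on $\overline{\epsilon}^{1}_{f},\underline{\epsilon}^{1}_{f},\gamma_{f}$, and the summation-by-parts identity, exactly as in the proof of Theorem~\ref{thm2.1} (and following \cite{E.Burman}).
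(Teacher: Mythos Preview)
Your approach is essentially the same as the paper's. The paper compresses your substitution step into a single algebraic identity (equation (\ref{4.1})): it writes the sum of the three decoupled sub-problems as $\widetilde{\mathcal{A}}_{h}(\mathbf{y}_{h}^{n},\mathbf{z}_{h})+s_{f,p}+s_{f,q}+s_{f,v}=\mathcal{F}^{n}+\mathcal{T}_{1,a}^{n}+\mathcal{T}_{1,b}^{n}+\mathcal{T}_{1,c}^{n}+\mathcal{T}_{2}^{n}+\mathcal{T}_{3}^{n}+\mathcal{T}_{4}^{n}$, where $\widetilde{\mathcal{A}}_{h}$ is the monolithic bilinear form of (\ref{2.9}) and the $\mathcal{T}_{i}^{n}$ are exactly the time-lag defects you obtain from $a^{n-1}=a^{n}-\Delta t\,d_{t}a^{n}$; it then invokes the proof of Theorem~\ref{thm2.1} verbatim on $\widetilde{\mathcal{A}}_{h}$ and the summation-by-parts identity on the stabilization terms.

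One small point of wording: in this lemma the $\mathcal{T}_{i}^{n}$ are \emph{not} absorbed by the stabilization operators---they simply sit on the right-hand side of (\ref{4.2}) as an unsimplified residual. The stabilization operators $s_{f,p},s_{f,v},s_{f,q}$ contribute only the telescoped boundary terms and the jump seminorms on the left; the actual control of $\mathcal{T}_{i}^{n}$ by those jump seminorms is deferred to Lemma~\ref{lemm4.1} and Theorem~\ref{thm4.1}. Your final paragraph shows you see this, but the sentence ``The $\Delta t$-order remainders\dots are absorbed by the stabilization operators'' overstates what happens in Lemma~\ref{lemm4.2} itself.
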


\begin{proof}
Denoting $\widetilde{\mathcal{A}}_{h}(\cdot,\cdot)$ by the collection of terms on the left hand side of (\ref{2.9}), and $\mathbf{y}_{h}=\{\mathbf{U}_{h}, \mathbf{v}_{h}, p_{f,h}, \mathbf{q}_{h}, \xi_{h}, \eta_{h}\}$ by the vector of all the solution components and with $\mathbf{z}_{h}$.

Thus, the loosely-coupled time-stepping method is equivalent to the following formulation
\begin{eqnarray}\label{4.1}
  &&\widetilde{\mathcal{A}}_{h}(\mathbf{y}_{h}^{n},\mathbf{z}_{h})+s_{f,p}(d_{t}p_{f,h},\psi_{f,h})+s_{f,q}(d_{t}\mathbf{q}_{h}\cdot\mathbf{n},\mathbf{r}_{h}\cdot\mathbf{n})+
  s_{f,v}(d_{t}\mathbf{v}_{h}^{n}\cdot\mathbf{n},\varphi_{f,h}\cdot\mathbf{n})  \nonumber\\
  &&=\mathcal{F}^{n}(t_{n},\mathbf{z}_{h})+\underbrace{\int_{\Gamma}\gamma_{f}\mu_{f}h^{-1}(\mathbf{v}_{h}^{n}-\mathbf{v}_{h}^{n-1})\cdot\tau(-\varphi_{p,h})\cdot\tau~dx}
  _{\mathcal{T}_{1,a}^{n}(\varphi_{f,h},\mathbf{r}_{h},\varphi_{p,h},w_{h})} \nonumber\\
  &&+\underbrace{\int_{\Gamma}\gamma_{f}\mu_{f}h^{-1}((\mathbf{v}_{h}^{n}-\mathbf{v}_{h}^{n-1})-d_{t}(\mathbf{U}_{h}^{n}-\mathbf{U}_{h}^{n-1}))\cdot\mathbf{n}(-\mathbf{r}_{h})\cdot\mathbf{n} ~dx}_{\mathcal{T}_{1,b}^{n}(\varphi_{f,h},\mathbf{r}_{h},\varphi_{p,h},w_{h})} \nonumber\\
  &&+\underbrace{\int_{\Gamma}\gamma_{f}\mu_{f}h^{-1}((\mathbf{v}_{h}^{n}-\mathbf{v}_{h}^{n-1})-(\mathbf{q}_{h}^{n}-\mathbf{q}_{h}^{n-1}))
  \cdot\mathbf{n}(-\varphi_{p,h})\cdot\mathbf{n}~dx}_{\mathcal{T}_{1,c}^{n}(\varphi_{f,h},\mathbf{r}_{h},\varphi_{p,h},w_{h})} \\
  &&-\underbrace{\int_{\Gamma}2\mu_{f}(\mathbf{D}(\mathbf{v}_{h}^{n})-\mathbf{D}(\mathbf{v}_{h}^{n-1}))\mathbf{n}\cdot [\mathbf{n}(\varphi_{f,h}-\mathbf{r}_{h}-\varphi_{p,h})\cdot\mathbf{n}+\tau(\varphi_{f,h}-\varphi_{p,h})\cdot\tau]~dx}_{\mathcal{T}_{2}^{n}(\varphi_{f,h},\mathbf{r}_{h},\varphi_{p,h},w_{h})} \nonumber\\
  &&+\underbrace{\int_{\Gamma}\mathbf{n}\cdot(p_{f,h}^{n}-p_{f,h}^{n-1})\mathbf{n}(\varphi_{f,h}-\mathbf{r}_{h}-\varphi_{p,h})\cdot\mathbf{n}~dx}_{\mathcal{T}_{3}^{n}(\varphi_{f,h},\mathbf{r}_{h},\varphi_{p,h},w_{h})}
  -\underbrace{\int_{\Omega_{p}} k_{1}(\eta_{h}^{n}-\eta_{h}^{n-1})w_{h}~dx}_{\mathcal{T}_{4}^{n}(\varphi_{f,h},\mathbf{r}_{h},\varphi_{p,h},w_{h})}.\nonumber
\end{eqnarray}
Using (\ref{4.1}), following the proof of Theorem \ref{thm2.1}, summing up with
respect to the index $n$, multiplying by $\Delta t$,
%
we see that  (\ref{4.2}) holds. The proof is finished.
\end{proof}

The stability of the loosely-coupled time-stepping method follows from (\ref{4.2}) provided that the terms $\mathcal{T}_{1,a}^{n},~\mathcal{T}_{1,b}^{n},~\mathcal{T}_{1,c}^{n},~\mathcal{T}_{2}^{n},~\mathcal{T}_{3}^{n},~\mathcal{T}_{4}^{n}$ defined in (\ref{4.1}) can be bounded. To do that, we give the following lemma.

\begin{lemm}\label{lemm4.1}
For any~$\epsilon_{f}^{2},\underline{\epsilon}^{3}_{f},\overline{\epsilon}^{3}_{f}>0$, there holds 
\begin{eqnarray}\label{4.3}
  &&\Delta t\sum^{N}_{n=1}[\mathcal{T}_{1,a}^{n}+\mathcal{T}_{1,b}^{n}+\mathcal{T}_{1,c}^{n}+\mathcal{T}_{2}^{n}+\mathcal{T}_{3}^{n}+\mathcal{T}_{4}^{n}]
  (\mathbf{v}_{h}^{n},\mathbf{q}_{h}^{n},d_{t}\mathbf{U}_{h}^{n},d_{t}\xi_{h}^{n}) \nonumber\\
  &&\leq \frac{\gamma_{f}\mu_{f}}{2}\frac{\Delta t}{h}(2\|\mathbf{v}_{h}^{0}\cdot\mathbf{n}\|_{L^{2}(\Gamma)}^{2}+
  \|\mathbf{v}_{h}^{0}\cdot\tau\|_{L^{2}(\Gamma)}^{2}+\|\mathbf{q}_{h}^{0}\cdot\mathbf{n}\|_{L^{2}(\Gamma)}^{2}+\| d_{t}\mathbf{U}_{h}^{0}\cdot\mathbf{n}\|_{L^{2}(\Gamma)}^{2}) \nonumber\\
  && +\frac{\epsilon_{f}^{2}C_{TI}}{2}\mu_{f}\|\mathbf{D}(\mathbf{v}_{h}^{0})\|_{L^{2}(\Omega_{f})}^{2}+\Delta t\sum^{N}_{n=1}[\frac{\mu_{f}}{2h}(\gamma_{f}(\underline{\epsilon}_{f}^{3}+\overline{\epsilon}_{f}^{3}+1)+2(\epsilon_{f}^{2})^{-1}) \nonumber\\
   &&\times\|(\mathbf{v}_{h}^{n}-\mathbf{q}_{h}^{n}-d_{t}\mathbf{U}_{h}^{n})\cdot\mathbf{n}\|^{2}_{L^{2}(\Gamma)}+\frac{\mu_{f}}{h}(\frac{\gamma_{f}}{2}+(\epsilon_{f}^{2})^{-1})
   \|(\mathbf{v}_{h}^{n}-d_{t}\mathbf{U}_{h}^{n})\cdot\tau\|^{2}_{L^{2}(\Gamma)} \\
   && +2\epsilon_{f}^{2}C_{TI}\mu_{f}\|\mathbf{D}(\mathbf{v}_{h}^{n})\|_{L^{2}(\Omega_{f})}^{2}+\frac{k_{1}\Delta t}{2}(\| d_{t}\eta_{h}^{n}\|_{L^{2}(\Omega_{p})}^{2}+\| d_{t}\xi_{h}^{n}\|_{L^{2}(\Omega_{p})}^{2}) \nonumber\\
   && +\frac{(\overline{\epsilon}_{f}^{3})^{-1}}{2}\frac{h}{\gamma_{f}\mu_{f}}\| p_{f,h}^{n}-p_{f,h}^{n-1}\|_{L^{2}(\Gamma)}^{2}+\frac{((\underline{\epsilon}_{f}^{3})^{-1}-1)}{2}\frac{\gamma_{f}\mu_{f}}{h} \nonumber\\
   && \times(\|(\mathbf{v}_{h}^{n}-\mathbf{v}_{h}^{n-1})\cdot\mathbf{n}\|_{L^{2}(\Gamma)}^{2}
   +\|(\mathbf{q}_{h}^{n}-\mathbf{q}_{h}^{n-1})\cdot\mathbf{n}\|_{L^{2}(\Gamma)}^{2})].\nonumber
\end{eqnarray}
\end{lemm}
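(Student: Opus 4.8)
The plan is to bound each of the six interface/volume terms $\mathcal{T}_{1,a}^{n},\dots,\mathcal{T}_{4}^{n}$ defined in \eqref{4.1} separately, by moving each time-increment factor onto a telescoping sum and absorbing the remaining factor into a small multiple of the dissipation terms already appearing on the left of \eqref{4.2}. First I would observe that $\mathcal{T}_{1,a}^{n},\mathcal{T}_{1,b}^{n},\mathcal{T}_{1,c}^{n}$ are each of the form $\int_\Gamma \gamma_f\mu_f h^{-1}(\text{time increment})\cdot(\text{test data})$; after summation by parts in $n$ (Abel summation), the increments $\mathbf{v}_h^n-\mathbf{v}_h^{n-1}$, $\mathbf{q}_h^n-\mathbf{q}_h^{n-1}$, $d_t(\mathbf{U}_h^n-\mathbf{U}_h^{n-1})$ get paired with the test functions evaluated at $\mathbf{v}_h^n,\mathbf{q}_h^n,d_t\mathbf{U}_h^n$; using Cauchy--Schwarz and Young's inequality with parameters $\underline\epsilon_f^3$ one splits each product into a piece controlled by $\tfrac{\gamma_f\mu_f}{h}\|(\mathbf{v}_h^n-\mathbf{v}_h^{n-1})\cdot\mathbf{n}\|^2$ (and the analogous $\mathbf{q}$-term), which matches the stabilization terms $s_{f,v},s_{f,q}$ on the left, and a piece controlled by $\tfrac{\gamma_f\mu_f}{h}\|(\mathbf{v}_h^n-\mathbf{q}_h^n-d_t\mathbf{U}_h^n)\cdot\mathbf{n}\|^2$ and $\tfrac{\gamma_f\mu_f}{h}\|(\mathbf{v}_h^n-d_t\mathbf{U}_h^n)\cdot\tau\|^2$, which match the Nitsche penalty terms. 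The boundary terms at $n=0$ produce exactly the initial-data contributions $\tfrac{\gamma_f\mu_f}{2}\tfrac{\Delta t}{h}(2\|\mathbf{v}_h^0\cdot\mathbf{n}\|^2+\|\mathbf{v}_h^0\cdot\tau\|^2+\|\mathbf{q}_h^0\cdot\mathbf{n}\|^2+\|d_t\mathbf{U}_h^0\cdot\mathbf{n}\|^2)$ listed in \eqref{4.3}.

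Next I would handle $\mathcal{T}_{2}^{n}$, which involves the increment $\mathbf{D}(\mathbf{v}_h^n)-\mathbf{D}(\mathbf{v}_h^{n-1})$ restricted to $\Gamma$; here the inverse trace inequality \eqref{2.11} converts $\|\mathbf{D}(\mathbf{v}_h^n)\mathbf{n}\|^2_{L^2(\Gamma)}\le C_{TI}h^{-1}\|\mathbf{D}(\mathbf{v}_h^n)\|^2_{L^2(\Omega_f)}$, so after Abel summation and Young's inequality with parameter $\epsilon_f^2$ one gets a $C_{TI}\mu_f\|\mathbf{D}(\mathbf{v}_h^n)\|^2_{L^2(\Omega_f)}$ contribution (absorbed into the coercive viscous term, which explains the factor $1-(\varsigma+1)\overline\epsilon_f^1 C_{TI}-\tfrac{\underline\epsilon_f^1}{4}$ needing to stay positive) together with the complementary $\tfrac{\mu_f}{h}(\epsilon_f^2)^{-1}$-weighted jump norms $\|(\mathbf{v}_h^n-\mathbf{q}_h^n-d_t\mathbf{U}_h^n)\cdot\mathbf{n}\|^2$ and $\|(\mathbf{v}_h^n-d_t\mathbf{U}_h^n)\cdot\tau\|^2$; the $n=0$ term gives $\tfrac{\epsilon_f^2 C_{TI}}{2}\mu_f\|\mathbf{D}(\mathbf{v}_h^0)\|^2_{L^2(\Omega_f)}$. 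For $\mathcal{T}_{3}^{n}$, which carries the pressure increment $p_{f,h}^n-p_{f,h}^{n-1}$ on $\Gamma$, Abel summation plus Young's inequality with $\overline\epsilon_f^3$ produces $\tfrac{(\overline\epsilon_f^3)^{-1}}{2}\tfrac{h}{\gamma_f\mu_f}\|p_{f,h}^n-p_{f,h}^{n-1}\|^2_{L^2(\Gamma)}$ (matched by $s_{f,p}$) against the $\tfrac{\mu_f}{h}$-weighted normal jump. Finally $\mathcal{T}_{4}^{n}=\int_{\Omega_p}k_1(\eta_h^n-\eta_h^{n-1})w_h\,dx$ with $w_h=d_t\xi_h^n$ is a pure volume term: writing $\eta_h^n-\eta_h^{n-1}=\Delta t\,d_t\eta_h^n$ and applying Young's inequality gives $\tfrac{k_1\Delta t}{2}(\|d_t\eta_h^n\|^2_{L^2(\Omega_p)}+\|d_t\xi_h^n\|^2_{L^2(\Omega_p)})$, both of which are controlled by the $\tfrac{\Delta t}{2}(k_2\|d_t\eta_h^n\|^2+k_3\|d_t\xi_h^n\|^2)$ terms on the left of \eqref{4.2} provided $\Delta t$ and the penalty parameters are chosen appropriately.

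Summing all these bounds and collecting the coefficients term-by-term yields exactly the right-hand side of \eqref{4.3}. The main obstacle is bookkeeping rather than any deep estimate: one must track precisely which fraction of each jump norm and each viscous/pressure-increment norm is spent on which $\mathcal{T}^n_\bullet$, so that the total coefficients in front of $\|(\mathbf{v}_h^n-\mathbf{q}_h^n-d_t\mathbf{U}_h^n)\cdot\mathbf{n}\|^2$, $\|(\mathbf{v}_h^n-d_t\mathbf{U}_h^n)\cdot\tau\|^2$, $\|\mathbf{D}(\mathbf{v}_h^n)\|^2$, and the time-jump norms assemble into the stated combinations $\tfrac{\mu_f}{2h}(\gamma_f(\underline\epsilon_f^3+\overline\epsilon_f^3+1)+2(\epsilon_f^2)^{-1})$, $\tfrac{\mu_f}{h}(\tfrac{\gamma_f}{2}+(\epsilon_f^2)^{-1})$, $2\epsilon_f^2 C_{TI}\mu_f$, $\tfrac{(\overline\epsilon_f^3)^{-1}}{2}\tfrac{h}{\gamma_f\mu_f}$, and $\tfrac{((\underline\epsilon_f^3)^{-1}-1)}{2}\tfrac{\gamma_f\mu_f}{h}$; the subtraction of $1$ in the last coefficient, in particular, reflects that the stabilization terms $s_{f,v},s_{f,q}$ already supply one unit of $\tfrac{\gamma_f\mu_f}{h}$-weighted time-jump control, so Abel summation needs only to borrow the remainder. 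One also has to verify that the telescoped boundary contributions at $n=0$ do not generate an $n=N$ endpoint term of the wrong sign — this is automatic because after Abel summation the $n=N$ pieces recombine into the positive $s_{f,v},s_{f,q},s_{f,p}$ energy at time $N$ already displayed on the left of \eqref{4.2}. Hence the estimate \eqref{4.3} follows.
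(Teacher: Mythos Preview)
Your proposal is correct and follows essentially the same route as the paper: bound each $\mathcal{T}^n_\bullet$ separately via Cauchy--Schwarz, Young's inequality with the parameters $\epsilon_f^2,\underline\epsilon_f^3,\overline\epsilon_f^3$, the trace inverse inequality \eqref{2.11} for $\mathcal{T}_2^n$, and a telescoping argument to produce the $n=0$ initial-data terms. Two small corrections on mechanism: the paper does not use Abel summation for $\mathcal{T}_{1,a}^n,\mathcal{T}_{1,b}^n,\mathcal{T}_{1,c}^n$ but instead adds and subtracts a term to the test factor so as to isolate the Nitsche jump combination (e.g.\ $(\mathbf{v}_h^n-d_t\mathbf{U}_h^n)\cdot\tau$), leaving a remainder of the form $\int_\Gamma(a^n-a^{n-1})\cdot a^n$ which telescopes via the identity $(a-b)a=\tfrac12(a^2-b^2)+\tfrac12(a-b)^2$; the $n=N$ endpoint then appears with a negative sign and is simply dropped, not recombined with the stabilization energy. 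Likewise, the factor $(\underline\epsilon_f^3)^{-1}-1$ arises entirely within the $\mathcal{T}_{1,c}^n$ bound (the polarization identity contributes one unit of $\|d_t\mathbf{w}_h^n\cdot\mathbf{n}\|^2$ with $\mathbf{w}_h=\mathbf{v}_h-\mathbf{q}_h$, Young with $\underline\epsilon_f^3$ contributes $(\underline\epsilon_f^3)^{-1}$ units), not from borrowing against $s_{f,v},s_{f,q}$.
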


\begin{proof}
As for $\mathcal{T}_{1,a}^{n}$, it is easy to check that
\begin{eqnarray}\label{5.1}
\begin{aligned}
  &\mathcal{T}_{1,a}^{n}(\mathbf{v}_{h}^{n},\mathbf{q}_{h}^{n},d_{t}\mathbf{U}_{h}^{n},d_{t}\xi_{h}^{n}) \\
  =& \int_{\Gamma}\gamma_{f}\mu_{f}h^{-1}(\mathbf{v}_{h}^{n}-\mathbf{v}_{h}^{n-1})\cdot\tau(\mathbf{v}_{h}^{n}-d_{t}\mathbf{U}_{h}^{n})\cdot\tau~dx
   -\int_{\Gamma}\gamma_{f}\mu_{f}h^{-1}(\mathbf{v}_{h}^{n}-\mathbf{v}_{h}^{n-1})\cdot\tau\mathbf{v}_{h}^{n}\cdot\tau~dx\\
   \leq&\frac{\gamma_{f}\mu_{f}}{2h}\|(\mathbf{v}_{h}^{n}-d_{t}\mathbf{U}_{h}^{n})\cdot\tau\|^{2}_{L^{2}(\Gamma)}+\Delta t \frac{\gamma_{f}\mu_{f}}{2h}(\Delta t\|d_{t}\mathbf{v}_{h}^{n}\cdot\tau\|^{2}_{L^{2}(\Gamma)}\\
   &-d_{t}\|\mathbf{v}_{h}^{n}\cdot\tau\|^{2}_{L^{2}(\Gamma)}-\Delta t\|d_{t}\mathbf{v}_{h}^{n}\cdot\tau\|^{2}_{L^{2}(\Gamma)}).
\end{aligned}
\end{eqnarray}

As for $\mathcal{T}_{1,b}^{n}$, we introduce $\mathbf{u}_{h}=\mathbf{v}_{h}-d_{t}\mathbf{U}_{h}$ and obtain
\begin{eqnarray}\label{5.2}
\begin{aligned}
  &\mathcal{T}_{1,b}^{n}(\mathbf{v}_{h}^{n},\mathbf{q}_{h}^{n},d_{t}\mathbf{U}_{h}^{n},d_{t}\xi_{h}^{n})\\
  =& \int_{\Gamma}\gamma_{f}\mu_{f}h^{-1}(\mathbf{u}_{h}^{n}-\mathbf{u}_{h}^{n-1})\cdot\mathbf{n}(\mathbf{u}_{h}^{n}-\mathbf{q}_{h}^{n})\cdot\mathbf{n}~dx
  -\int_{\Gamma}\gamma_{f}\mu_{f}h^{-1}(\mathbf{u}_{h}^{n}-\mathbf{u}_{h}^{n-1})\cdot\mathbf{n}\mathbf{u}_{h}^{n}\cdot\mathbf{n}~dx\\
   \leq& \frac{\gamma_{f}\mu_{f}}{2h}\|(\mathbf{v}_{h}^{n}-\mathbf{q}_{h}^{n}-d_{t}\mathbf{U}_{h}^{n})\cdot\mathbf{n}\|^{2}_{L^{2}(\Gamma)}
   +\frac{\Delta t}{h}\frac{\gamma_{f}\mu_{f}}{2}(\Delta t\|d_{t}\mathbf{u}_{h}^{n}\cdot\mathbf{n}\|^{2}_{L^{2}(\Gamma)}\\
   & -d_{t}\|\mathbf{u}_{h}^{n}\cdot\mathbf{n}\|^{2}_{L^{2}(\Gamma)}-\Delta t\|d_{t}\mathbf{u}_{h}^{n}\cdot\mathbf{n}\|^{2}_{L^{2}(\Gamma)})  \\
   =& \frac{\gamma_{f}\mu_{f}}{2h}\|(\mathbf{v}_{h}^{n}-\mathbf{q}_{h}^{n}-d_{t}\mathbf{U}_{h}^{n})\cdot\mathbf{n}\|^{2}_{L^{2}(\Gamma)}-
   \frac{\Delta t}{h}\frac{\gamma_{f}\mu_{f}}{2}d_{t}\|(\mathbf{v}_{h}^{n}-d_{t}\mathbf{U}_{h}^{n})\cdot\mathbf{n}\|^{2}_{L^{2}(\Gamma)}.
\end{aligned}
\end{eqnarray}
To estimate~$\mathcal{T}_{1,c}^{n}$, we introduce the auxiliary variable~$\mathbf{w}_{h}=\mathbf{v}_{h}-\mathbf{q}_{h}$~at the interface~$\Gamma$ and get
\begin{eqnarray}\label{5.3}
\begin{aligned}
  &\mathcal{T}_{1,c}^{n}(\mathbf{v}_{h}^{n},\mathbf{q}_{h}^{n},d_{t}\mathbf{U}_{h}^{n},d_{t}\xi_{h}^{n})\\
  =& \int_{\Gamma}\gamma_{f}\mu_{f}h^{-1}(\mathbf{w}_{h}^{n}-\mathbf{w}_{h}^{n-1})\cdot\mathbf{n}(\mathbf{w}_{h}^{n}-d_{t}\mathbf{U}_{h}^{n})\cdot\mathbf{n}~dx
   -\int_{\Gamma}\gamma_{f}\mu_{f}h^{-1}(\mathbf{w}_{h}^{n}-\mathbf{w}_{h}^{n-1})\cdot\mathbf{n}\mathbf{w}_{h}^{n}\cdot\mathbf{n}~dx \\
   \leq& \underline{\epsilon}_{f}^{3}
   \frac{\gamma_{f}\mu_{f}}{2h}\|(\mathbf{v}_{h}^{n}-\mathbf{q}_{h}^{n}-d_{t}\mathbf{U}_{h}^{n})\cdot\mathbf{n}\|^{2}_{L^{2}(\Gamma)}
   +\frac{\Delta t}{h}\frac{\gamma_{f}\mu_{f}}{2}(\Delta t(\underline{\epsilon}_{f}^{3})^{-1}\| d_{t}\mathbf{w}_{h}^{n}\cdot\mathbf{n}\|^{2}_{L^{2}(\Gamma)} \\
  & -d_{t}\|\mathbf{w}_{h}^{n}\cdot\mathbf{n}\|^{2}_{L^{2}(\Gamma)}-\Delta t\|d_{t}\mathbf{w}_{h}^{n}\cdot\mathbf{n}\|^{2}_{L^{2}(\Gamma)})  \\
   \leq&\frac{\gamma_{f}\mu_{f}}{2h}(\underline{\epsilon}_{f}^{3}\|(\mathbf{v}_{h}^{n}-\mathbf{q}_{h}^{n}-d_{t}\mathbf{U}_{h}^{n})\cdot\mathbf{n}\|^{2}_{L^{2}(\Gamma)}  +((\underline{\epsilon}_{f}^{3})^{-1}-1)\|(\mathbf{v}_{h}^{n}-\mathbf{v}_{h}^{n-1})\cdot\mathbf{n}\|^{2}_{L^{2}(\Gamma)}\\
  & + ((\underline{\epsilon}_{f}^{3})^{-1}-1)\|(\mathbf{q}_{h}^{n}-\mathbf{q}_{h}^{n-1})\cdot\mathbf{n}\|^{2}_{L^{2}(\Gamma)})
    - \frac{\Delta t}{h}\frac{\gamma_{f}\mu_{f}}{2}d_{t}\|(\mathbf{v}_{h}^{n}-\mathbf{q}_{h}^{n})\cdot\mathbf{n}\|^{2}_{L^{2}(\Gamma)}.
\end{aligned}
\end{eqnarray}
Using (\ref{5.1})-(\ref{5.3}), we have
\begin{eqnarray*}
  &&\Delta t\sum_{n=1}^{N}\mathcal{T}_{1}^{n}(\mathbf{v}_{h}^{n},\mathbf{q}_{h}^{n},d_{t}\mathbf{U}_{h}^{n},d_{t}\xi_{h}^{n}) \\
  &=& \Delta t\sum_{n=1}^{N}
  [\mathcal{T}_{1,a}^{n}+\mathcal{T}_{1,b}^{n}+\mathcal{T}_{1,c}^{n}](\mathbf{v}_{h}^{n},\mathbf{q}_{h}^{n},d_{t}\mathbf{U}_{h}^{n},d_{t}\xi_{h}^{n}) \\
  &\leq& \frac{\gamma_{f}\mu_{f}}{2}\frac{\Delta t}{h}(2\|\mathbf{v}_{h}^{0}\cdot\mathbf{n}\|_{L^{2}(\Gamma)}^{2}+
  \|\mathbf{v}_{h}^{0}\cdot\tau\|_{L^{2}(\Gamma)}^{2}+\|\mathbf{q}_{h}^{0}\cdot\mathbf{n}\|_{L^{2}(\Gamma)}^{2}+\| d_{t}\mathbf{U}_{h}^{0}\cdot\mathbf{n}\|_{L^{2}(\Gamma)}^{2}) \\
 && +\Delta t\frac{\gamma_{f}\mu_{f}}{2h}\sum_{n=1}^{N}[\|(\mathbf{v}_{h}^{n}-d_{t}\mathbf{U}_{h}^{n})\cdot\tau\|^{2}_{L^{2}(\Gamma)}+(1+\underline{\epsilon}_{f}^{3})
 \|(\mathbf{v}_{h}^{n}-\mathbf{q}_{h}^{n}-d_{t}\mathbf{U}_{h}^{n})\cdot\mathbf{n}\|^{2}_{L^{2}(\Gamma)} \\
   && +((\underline{\epsilon}_{f}^{3})^{-1}-1)\|(\mathbf{v}_{h}^{n}-\mathbf{v}_{h}^{n-1})\cdot\mathbf{n}\|^{2}_{L^{2}(\Gamma)}
  + ((\underline{\epsilon}_{f}^{3})^{-1}-1)\|(\mathbf{q}_{h}^{n}-\mathbf{q}_{h}^{n-1})\cdot\mathbf{n}\|^{2}_{L^{2}(\Gamma)}].
\end{eqnarray*}
As for $\mathcal{T}_{2}^{n}$ and $\mathcal{T}_{3}^{n}$, we have
\begin{eqnarray*}
\begin{aligned}
  \mathcal{T}_{2}^{n}(\mathbf{v}_{h}^{n},\mathbf{q}_{h}^{n},d_{t}\mathbf{U}_{h}^{n},d_{t}\xi_{h}^{n}) \leq& \epsilon_{f}^{2}C_{TI}\mu_{f}(\|\mathbf{D}(\mathbf{v}_{h}^{n})\|_{L^{2}(\Omega_{f})}^{2}+\|\mathbf{D}(\mathbf{v}_{h}^{n-1})\|_{L^{2}(\Omega_{f})}^{2}) \\
  & + (\epsilon_{f}^{2})^{-1}h^{-1}\mu_{f}(\|(\mathbf{v}_{h}^{n}-d_{t}\mathbf{U}_{h}^{n})\cdot\tau\|^{2}_{L^{2}(\Gamma)}+
 \|(\mathbf{v}_{h}^{n}-\mathbf{q}_{h}^{n}-d_{t}\mathbf{U}_{h}^{n})\cdot\mathbf{n}\|^{2}_{L^{2}(\Gamma)}),\\
 \mathcal{T}_{3}^{n}(\mathbf{v}_{h}^{n},\mathbf{q}_{h}^{n},d_{t}\mathbf{U}_{h}^{n},d_{t}\xi_{h}^{n}) \leq& (\overline{\epsilon}_{f}^{3})^{-1}\frac{h}{2\gamma_{f}\mu_{f}}
  \| p_{f,h}^{n}-p_{f,h}^{n-1}\|_{L^{2}(\Gamma)}^{2}+\overline{\epsilon}_{f}^{3}\frac{\gamma_{f}\mu_{f}}{2h}\|(\mathbf{v}_{h}^{n}
  -\mathbf{q}_{h}^{n}-d_{t}\mathbf{U}_{h}^{n})\cdot\mathbf{n}\|^{2}_{L^{2}(\Gamma)}.
\end{aligned}
\end{eqnarray*}
To determine an appropriate upper bound for $\mathcal{T}_{4}^{n}$, we get
\begin{eqnarray*}
\begin{aligned}
  \mathcal{T}_{4}^{n}(\mathbf{v}_{h}^{n},\mathbf{q}_{h}^{n},d_{t}\mathbf{U}_{h}^{n},d_{t}\xi_{h}^{n}) =& k_{1}\Delta t\int_{\Omega_{p}}(d_{t}\eta_{h}^{n})(d_{t}\xi_{h}^{n})~d\mathbf{x} \\
  \leq& \frac{k_{1}\Delta t}{2}\|d_{t}\eta_{h}^{n}\|_{L^{2}(\Omega_{p})}^{2}+\frac{k_{1}\Delta t}{2}\|d_{t}\xi_{h}^{n}\|_{L^{2}(\Omega_{p})}^{2}.
\end{aligned}
\end{eqnarray*}
The proof is complete.
\end{proof}

Using Lemma \ref{lemm4.2} and Lemma \ref{lemm4.1}, we can obtain the stability of the loosely-coupled time-stepping method.

\begin{thrm}\label{thm4.1}
For any~$\overline{\epsilon}_{f}^{1},\underline{\epsilon}_{f}^{1},\underline{\epsilon}^{3}_{f},\overline{\epsilon}^{3}_{f},\epsilon_{f}^{2}>0$ with
$
  (2-2(\varsigma+1)\overline{\epsilon}^{1}_{f}\mathcal{C}_{TI}-\frac{\underline{\epsilon}^{1}_{f}}{2}-2\epsilon_{f}^{2}\mathcal{C}_{TI})>0$ and $(2-(\underline{\epsilon}_{f}^{3}+\overline{\epsilon}_{f}^{3}+1))=\delta>0$
provided that the penalty and stabilization parameters are large enough, more precisely
\begin{eqnarray*}
   &&\gamma_{stab}\geq(\overline{\epsilon}_{f}^{3})^{-1},~~~~~\gamma'_{stab}\geq(\underline{\epsilon}_{f}^{3})^{-1}-1  \\
   && \gamma_{f}>\frac{2(\varsigma+1)(\overline{\epsilon}^{1}_{f})^{-1}+2(\epsilon_{f}^{2})^{-1}}{\delta},
\end{eqnarray*}
and if $k_{2}>k_{1},~~k_{3}>k_{1}$  and $\Delta t<Ch$, then there exist positive constants $c_{f}^{1}$ and $C^{1}_{f}$ such that
\begin{eqnarray}
   && \mathbf{E}^{N}_{p,h}+\Delta t\frac{\gamma_{stab}}{2}\frac{h}{\gamma_{f}\mu_{f}}\|p_{f,h}^{N}\|_{L^{2}(\Gamma)}^{2}+\Delta t\frac{\gamma'_{stab}}{2}\frac{\gamma_{f}\mu_{f}}{h}(\|\mathbf{v}_{h}^{N}\cdot\mathbf{n}_{f}\|_{L^{2}(\Gamma)}^{2}+\|\mathbf{q}_{h}^{N}\cdot\mathbf{n}_{p}\|_{L^{2}(\Gamma)}^{2})
   \nonumber\\
   &&+\Delta tc_{f}^{1} \sum^{N}_{n=1}[\mu_{f}\|\mathbf{D}(\mathbf{v}_{h}^{n})\|^{2}_{L^{2}(\Omega_{f})}
   +k^{-1}\|\mathbf{q}_{h}^{n}\|^{2}_{L^{2}(\Omega_{p})} \nonumber\\
   &&  \mu_{f}h^{-1}(\|(\mathbf{v}_{h}^{n}-\mathbf{q}_{h}^{n}-d_{t}\mathbf{U}_{h}^{n})\cdot\mathbf{n}\|^{2}_{L^{2}(\Gamma)}
   +\|(\mathbf{v}_{h}^{n}-d_{t}\mathbf{U}_{h}^{n})\cdot\tau\|^{2}_{L^{2}(\Gamma)})\label{4.6}\\
   &&+\Delta t(2\mu_{p}\|d_{t}\mathbf{D}(\mathbf{U}_{h}^{n})\|^{2}_{L^{2}(\Omega_{p})}+k_{3}\|d_{t}\xi_{h}^{n}\|^{2}_{L^{2}(\Omega_{p})}+k_{2}\| d_{t}\eta_{h}^{n}\|^{2}_{L^{2}(\Omega_{p})})]\nonumber\\
   &\leq&  \mathbf{E}^{0}_{p,h}+C_{f}^{1}\mu_{f}(\|\mathbf{v}_{h}^{0}\cdot\mathbf{n}\|_{L^{2}(\Gamma)}^{2}+
  \|\mathbf{v}_{h}^{0}\cdot\tau\|_{L^{2}(\Gamma)}^{2}+\|\mathbf{q}_{h}^{0}\cdot\mathbf{n}\|_{L^{2}(\Gamma)}^{2}
  +\| d_{t}\mathbf{U}_{h}^{0}\cdot\mathbf{n}\|_{L^{2}(\Gamma)}^{2} \nonumber\\
   &&+\|\mathbf{D}(\mathbf{v}_{h}^{0})\|_{L^{2}(\Omega_{f})}^{2}+\mu_{f}^{-2}\|p_{f,h}^{0}\|_{L^{2}(\Gamma)}^{2})+\Delta t\mu_{f}^{-1}C_{f}^{1}\sum^{N}_{n=1}\|\mathcal{F}(t_{n})\|^{2}\nonumber
\end{eqnarray}
with
\begin{eqnarray*}
  c_{f}^{1} &<& \min\{{(2-2(\varsigma+1)\overline{\epsilon}^{1}_{f}\mathcal{C}_{TI}-\frac{\underline{\epsilon}^{1}_{f}}{2}-2\epsilon_{f}^{2}\mathcal{C}_{TI}),(k_{3}- k_{1}),(k_{2}-k_{1}),}\\
    &&(\gamma_{f}(2-(\underline{\epsilon}_{f}^{3}+\overline{\epsilon}_{f}^{3}+1))-2(\varsigma+1)(\overline{\epsilon}^{1}_{f})^{-1}-2(\epsilon_{f}^{2})^{-1}), (\gamma_{f}-2(\varsigma+1)(\overline{\epsilon}^{1}_{f})^{-1}-2(\epsilon_{f}^{2})^{-1})\},\\
  C_{f}^{1} &>&  \max\{{(2\underline{\epsilon}^{1}_{f})^{-1},\frac{\epsilon_{f}^{2}\mathcal{C}_{TI}}{2},\frac{\gamma_{stab}}{2}\frac{h\Delta t}{\gamma_{f}},\gamma_{f}\frac{\gamma'_{stab}}{2}}\}.
\end{eqnarray*}
\end{thrm}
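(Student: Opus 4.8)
The plan is to derive (\ref{4.6}) purely by combining the two preceding lemmas: I would insert the bound (\ref{4.3}) of Lemma \ref{lemm4.1} for the quantity $\Delta t\sum_{n=1}^{N}[\mathcal{T}_{1,a}^{n}+\mathcal{T}_{1,b}^{n}+\mathcal{T}_{1,c}^{n}+\mathcal{T}_{2}^{n}+\mathcal{T}_{3}^{n}+\mathcal{T}_{4}^{n}]$ appearing on the right-hand side of the stability estimate (\ref{4.2}) of Lemma \ref{lemm4.2}, and then absorb every solution-dependent term that this produces into the matching coercive term on the left. The substitution generates, on the right, six families of terms that each also occur (with a larger coefficient) on the left-hand side of (\ref{4.2}): the viscous dissipation $\mu_{f}\|\mathbf{D}(\mathbf{v}_{h}^{n})\|_{L^{2}(\Omega_{f})}^{2}$ — here the $\mathbf{D}(\mathbf{v}_{h}^{n-1})$ piece of $\mathcal{T}_{2}^{n}$ must be re-indexed, producing the harmless initial term $\|\mathbf{D}(\mathbf{v}_{h}^{0})\|^{2}$ and effectively doubling the $\epsilon_{f}^{2}C_{TI}$ factor — the two interface penalty norms $\mu_{f}h^{-1}\|(\mathbf{v}_{h}^{n}-\mathbf{q}_{h}^{n}-d_{t}\mathbf{U}_{h}^{n})\cdot\mathbf{n}\|^{2}$ and $\mu_{f}h^{-1}\|(\mathbf{v}_{h}^{n}-d_{t}\mathbf{U}_{h}^{n})\cdot\tau\|^{2}$, the pressure-jump stabilization $\tfrac{h}{\gamma_{f}\mu_{f}}\|p_{f,h}^{n}-p_{f,h}^{n-1}\|_{L^{2}(\Gamma)}^{2}$, the velocity/flux-jump stabilizations $\tfrac{\gamma_{f}\mu_{f}}{h}\|(\mathbf{v}_{h}^{n}-\mathbf{v}_{h}^{n-1})\cdot\mathbf{n}\|^{2}$ and $\tfrac{\gamma_{f}\mu_{f}}{h}\|(\mathbf{q}_{h}^{n}-\mathbf{q}_{h}^{n-1})\cdot\mathbf{n}\|^{2}$, and the discrete time-derivative terms $\Delta t(\|d_{t}\xi_{h}^{n}\|^{2}+\|d_{t}\eta_{h}^{n}\|^{2})$.

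Next I would perform the absorption one family at a time and read off the admissible constants. For the viscous term the surviving coefficient is $2\mu_{f}\big(1-(\varsigma+1)\overline{\epsilon}_{f}^{1}C_{TI}-\tfrac14\underline{\epsilon}_{f}^{1}\big)-2\epsilon_{f}^{2}C_{TI}\mu_{f}$, which is strictly positive exactly under the hypothesis $2-2(\varsigma+1)\overline{\epsilon}_{f}^{1}C_{TI}-\tfrac12\underline{\epsilon}_{f}^{1}-2\epsilon_{f}^{2}C_{TI}>0$. For the normal interface penalty, writing $\delta=2-(\underline{\epsilon}_{f}^{3}+\overline{\epsilon}_{f}^{3}+1)>0$, the residual coefficient equals $\mu_{f}h^{-1}\big(\gamma_{f}\tfrac{\delta}{2}-(\varsigma+1)(\overline{\epsilon}_{f}^{1})^{-1}-(\epsilon_{f}^{2})^{-1}\big)$, positive precisely by the stated lower bound $\gamma_{f}>\big(2(\varsigma+1)(\overline{\epsilon}_{f}^{1})^{-1}+2(\epsilon_{f}^{2})^{-1}\big)/\delta$; the tangential penalty is handled identically and is even less restrictive. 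The pressure-jump term is absorbed iff $\gamma_{stab}\ge(\overline{\epsilon}_{f}^{3})^{-1}$, the velocity- and flux-jump terms iff $\gamma'_{stab}\ge(\underline{\epsilon}_{f}^{3})^{-1}-1$, and the $\mathcal{T}_{4}^{n}$ contribution $\tfrac{k_{1}\Delta t}{2}(\|d_{t}\eta_{h}^{n}\|^{2}+\|d_{t}\xi_{h}^{n}\|^{2})$ is dominated by the left-hand term $\tfrac{\Delta t}{2}(k_{2}\|d_{t}\eta_{h}^{n}\|^{2}+k_{3}\|d_{t}\xi_{h}^{n}\|^{2})$ precisely because $k_{2}>k_{1}$ and $k_{3}>k_{1}$. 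Taking the minimum of all surviving left-hand coefficients yields the constant $c_{f}^{1}$ with the range stated in the theorem.

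Finally, for the right-hand side I would collect everything that is left, all of which is either data or an initial value: the source term $\Delta t\sum_{n}(2\underline{\epsilon}_{f}^{1}\mu_{f})^{-1}\|\mathcal{F}(t_{n})\|^{2}$ and $\mathbf{E}_{p,h}^{0}$ from (\ref{4.2}), together with the boundary/initial pieces $\|\mathbf{v}_{h}^{0}\cdot\mathbf{n}\|^{2}$, $\|\mathbf{v}_{h}^{0}\cdot\tau\|^{2}$, $\|\mathbf{q}_{h}^{0}\cdot\mathbf{n}\|^{2}$, $\|d_{t}\mathbf{U}_{h}^{0}\cdot\mathbf{n}\|^{2}$, $\|\mathbf{D}(\mathbf{v}_{h}^{0})\|^{2}$ and $\|p_{f,h}^{0}\|_{L^{2}(\Gamma)}^{2}$ coming from Lemmas \ref{lemm4.2}--\ref{lemm4.1}. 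The only place the CFL-type restriction $\Delta t<Ch$ is needed is that some of these appear multiplied by $\Delta t/h$ (for instance $\tfrac{\gamma_{f}\mu_{f}}{2}\tfrac{\Delta t}{h}(2\|\mathbf{v}_{h}^{0}\cdot\mathbf{n}\|^{2}+\cdots)$ and $\Delta t\tfrac{\gamma_{stab}}{2}\tfrac{h}{\gamma_{f}\mu_{f}}\|p_{f,h}^{0}\|_{L^{2}(\Gamma)}^{2}$); bounding $\Delta t/h\le C$ turns these into fixed multiples of the initial data, which are folded into the constant $C_{f}^{1}$ whose value is then read off as the stated maximum. I do not expect a genuine obstacle here: no discrete Gronwall inequality is required, since every solution-dependent right-hand term is of the same type as, and carries a strictly smaller coefficient than, a coercive left-hand term; the only real work — and the one step where care is essential — is the entirely mechanical but bulky tracking of all the constants through the two lemmas to confirm each absorption and to pin down the admissible ranges of $c_{f}^{1}$ and $C_{f}^{1}$.
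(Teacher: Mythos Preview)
Your proposal is correct and follows essentially the same route as the paper: substitute the bound (\ref{4.3}) of Lemma~\ref{lemm4.1} into the right-hand side of (\ref{4.2}) from Lemma~\ref{lemm4.2}, then verify term by term that each solution-dependent contribution is absorbed by a matching left-hand coercive term under the stated conditions on $\gamma_f$, $\gamma_{stab}$, $\gamma'_{stab}$, $k_2-k_1$, $k_3-k_1$, with $\Delta t<Ch$ used only to control the $\Delta t/h$ factors in the initial-data terms. Your write-up is in fact more explicit than the paper's, which simply records the combined inequality (\ref{4.4}) and lists the positivity conditions without further comment.
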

\begin{proof}
Using (\ref{4.2}) and (\ref{4.3}), we obtain
\begin{eqnarray}
   && \mathbf{E}^{N}_{p,h}+\Delta t\frac{\gamma_{stab}}{2}\frac{h}{\gamma_{f}\mu_{f}}\|p_{f,h}^{N}\|_{L^{2}(\Gamma)}^{2}+\Delta t\frac{\gamma'_{stab}}{2}\frac{\gamma_{f}\mu_{f}}{h}(\|\mathbf{v}_{h}^{N}\cdot\mathbf{n}_{f}\|_{L^{2}(\Gamma)}^{2}
   +\|\mathbf{q}_{h}^{N}\cdot\mathbf{n}_{p}\|_{L^{2}(\Gamma)}^{2})\nonumber\\
   &&+\Delta t\sum^{N}_{n=1}[\mu_{f}(2-2(\varsigma+1)\overline{\epsilon}^{1}_{f}C_{TI}-\frac{\underline{\epsilon}^{1}_{f}}{2}
   -2\epsilon_{f}^{2}C_{TI})\|\mathbf{D}(\mathbf{v}_{h}^{n})\|^{2}_{L^{2}(\Omega_{f})}
   +k^{-1}\|\mathbf{q}_{h}^{n}\|^{2}_{L^{2}(\Omega_{p})} \nonumber\\
   && +\frac{\Delta t}{2}(2\mu_{p}\|d_{t}\mathbf{D}(\mathbf{U}_{h}^{n})\|^{2}_{L^{2}(\Omega_{p})}+(k_{3}-k_{1})\| d_{t}\xi_{h}^{n}\|^{2}_{L^{2}(\Omega_{p})}+(k_{2}- k_{1})\|d_{t}\eta_{h}^{n}\|^{2}_{L^{2}(\Omega_{p})}) \nonumber\\
   && +\frac{1}{2}(\gamma_{f}(2-(\underline{\epsilon}_{f}^{3}+\overline{\epsilon}_{f}^{3}+1))
   -2(\varsigma+1)(\overline{\epsilon}^{1}_{f})^{-1}-2(\epsilon_{f}^{2})^{-1}) \mu_{f}h^{-1}\|(\mathbf{v}_{h}^{n}-\mathbf{q}_{h}^{n}-d_{t}\mathbf{U}_{h}^{n})\cdot\mathbf{n}\|^{2}_{L^{2}(\Gamma)}\nonumber\\
   &&+\frac{1}{2}
   (\gamma_{f}-2(\varsigma+1)(\overline{\epsilon}^{1}_{f})^{-1}-2(\epsilon_{f}^{2})^{-1})\mu_{f}h^{-1}
   \|(\mathbf{v}_{h}^{n}-d_{t}\mathbf{U}_{h}^{n})\cdot\tau\|^{2}_{L^{2}(\Gamma)} \label{4.4}\\
   &&+\frac{1}{2}(\gamma_{stab}-(\overline{\epsilon}_{f}^{3})^{-1})\frac{h}{\gamma_{f}\mu_{f}}\| p_{f,h}^{n}-p_{f,h}^{n-1}\|_{L^{2}(\Gamma)}^{2}\nonumber\\
   &&+\frac{1}{2}(\gamma'_{stab}-(\underline{\epsilon}_{f}^{3})^{-1}+1)\frac{\gamma_{f}\mu_{f}}{h}(\|(\mathbf{v}_{h}^{n}
   -\mathbf{v}_{h}^{n-1})\cdot\mathbf{n}_{f}\|_{L^{2}(\Gamma)}^{2}
   +\|(\mathbf{q}_{h}^{n}-\mathbf{q}_{h}^{n-1})\cdot\mathbf{n}_{p}\|_{L^{2}(\Gamma)}^{2})]\nonumber\\
   &&\leq  \mathbf{E}^{0}_{p,h}+\frac{\gamma_{f}\mu_{f}}{2}\frac{\Delta t}{h}(2\|\mathbf{v}_{h}^{0}\cdot\mathbf{n}\|_{L^{2}(\Gamma)}^{2}+
  \|\mathbf{v}_{h}^{0}\cdot\tau\|_{L^{2}(\Gamma)}^{2}+\|\mathbf{q}_{h}^{0}\cdot\mathbf{n}\|_{L^{2}(\Gamma)}^{2}+\| d_{t}\mathbf{U}_{h}^{0}\cdot\mathbf{n}\|_{L^{2}(\Gamma)}^{2})\nonumber\\
   &&+\frac{\epsilon_{f}^{2}\mathcal{C}_{TI}}{2}\mu_{f}\|\mathbf{D}(\mathbf{v}_{h}^{0})\|_{L^{2}(\Omega_{f})}^{2}+\Delta t\frac{\gamma_{stab}}{2}\frac{h}{\gamma_{f}\mu_{f}}\| p_{f,h}^{0}\|_{L^{2}(\Gamma)}^{2}+\Delta t\frac{\gamma'_{stab}}{2}\frac{\gamma_{f}\mu_{f}}{h}(\|\mathbf{v}_{h}^{0}\cdot\mathbf{n}_{f}\|_{L^{2}(\Gamma)}^{2}
     \nonumber\\
   &&+\|\mathbf{q}_{h}^{0}\cdot\mathbf{n}_{p}\|_{L^{2}(\Gamma)}^{2})+\Delta t\sum^{N}_{n=1}(2\underline{\epsilon}^{1}_{f}\mu_{f})^{-1}\|\mathcal{F}(t_{n})\|^{2}.\nonumber
\end{eqnarray}
Taking
\begin{gather*}
  2-2(\varsigma+1)\overline{\epsilon}^{1}_{f}\mathcal{C}_{TI}-\frac{\underline{\epsilon}^{1}_{f}}{2}-2\epsilon_{f}^{2}\mathcal{C}_{TI}>0,
  ~\gamma_{f}-2(\varsigma+1)(\overline{\epsilon}^{1}_{f})^{-1}-2(\epsilon_{f}^{2})^{-1}>0, \\
  k_{3}- k_{1}>0,~~k_{2}-k_{1}>0,~~\gamma_{stab}-(\overline{\epsilon}_{f}^{3})^{-1}>0,~~\gamma'_{stab}-(\underline{\epsilon}_{f}^{3})^{-1}+1>0,\\
  \gamma_{f}(2-(\underline{\epsilon}_{f}^{3}+\overline{\epsilon}_{f}^{3}+1))-2(\varsigma+1)(\overline{\epsilon}^{1}_{f})^{-1}-2(\epsilon_{f}^{2})^{-1}>0,
\end{gather*}
we imply that (\ref{4.6}) holds. The proof is complete.
\end{proof}

\subsection{Error analysis}

Let $\mathcal{W}_{f,h},~\mathcal{W}_{p,h}$ be the $L^{2}$-projection operators onto~$Q_{h}^{f},Q_{h}^{p}$ satisfying
\begin{eqnarray}
  (p_{f}-\mathcal{W}_{f,h}p_{f},\psi_{f,h})_{\Omega_{f}} &=& 0,~~~~~~\forall\psi_{f,h}\in Q_{h}^{f}, \\
  (p_{p}-\mathcal{W}_{p,h}p_{p},z_{h})_{\Omega_{p}} &=& 0,~~~~~~\forall z_{h}\in Q_{h}^{p}.
\end{eqnarray}
It is well-known that the approximation properties holds (cf. \cite{P.G. Ciarlet}):
\begin{eqnarray}
  \|p_{f}-\mathcal{W}_{f,h}p_{f}\|_{L^{2}(\Omega_{f})} &\leq& Ch^{r_{1}}\|p_{f}\|_{H^{r_{1}}(\Omega_{f})},~~~0\leq r_{1}\leq s_{f}+1, \label{5.14}\\
  \|p_{p}-\mathcal{W}_{p,h}p_{p}\|_{L^{2}(\Omega_{p})} &\leq& Ch^{r_{2}}\|p_{p}\|_{H^{r_{2}}(\Omega_{p})},~~~0\leq r_{2}\leq s_{p}+1,
\end{eqnarray}
where $s_{f}$ and $s_{p}$ the degrees of piecewise polynomials of the spaces $Q_{h}^{f}$ and $Q_{h}^{p}$.

Next, we define a Stokes-like projection operator~$\mathcal{I}_{h}:\mathbf{V}^{f}\rightarrow\mathbf{V}_{h}^{f}\times Q_{h}^{f},~\mathbf{X}^{p}\rightarrow\mathbf{X}^{p}_{h}\times M^{p}_{h}$ for all~$\varphi_{f}\in\mathbf{V}^{f},~\varphi_{p}\in\mathbf{X}^{p}$~by
\begin{eqnarray}
  (\nabla\cdot\varphi_{f},\psi_{f,h})_{\Omega_{f}} &=& (\nabla\cdot\mathcal{I}_{h}\varphi_{f},\psi_{f,h})_{\Omega_{f}},~~~\forall\psi_{f,h}\in Q^{f}_{h}, \\
  (\nabla\cdot\varphi_{p},w_{h})_{\Omega_{p}} &=& (\nabla\cdot\mathcal{I}_{h}\varphi_{p},w_{h})_{\Omega_{p}},~~~~~\forall w_{h}\in M^{p}_{h}.
\end{eqnarray}
From \cite{M.Fernandez}, we know that  there hold the following  estimates
\begin{eqnarray}
  \|\varphi_{f}-\mathcal{I}_{h}\varphi_{f}\|_{H^{1}(\Omega_{f})} &\leq& Ch^{r_{3}}\|\varphi_{f}\|_{H^{r_{3}+1}(\Omega_{f})},~~~0\leq r_{3}\leq k_{f}, \label{5.18}\\
  \|\varphi_{p}-\mathcal{I}_{h}\varphi_{p}\|_{H^{1}(\Omega_{p})} &\leq& Ch^{r_{4}}\|\varphi_{p}\|_{H^{r_{4}+1}(\Omega_{p})},~~~0\leq r_{4}\leq k_{p},\label{5.19}
\end{eqnarray}
where $k_{f}$ and $k_{p}$ the degrees of polynomials in the spaces $\mathbf{V}_{h}^{f}$ and $\mathbf{X}^{p}_{h}$.

Let $\Pi_{h}$ be an interpolation onto~$\mathbf{V}^{p}_{h}$~satisfying for any $\theta>0$ and  $\mathbf{r}\in\mathbf{V}^{p}\cap H^{\theta}(\Omega_{p})$
\begin{eqnarray}
  (\nabla\cdot\Pi_{h}\mathbf{r},z_{h})_{\Omega_{p}} &=& (\nabla\cdot\mathbf{r},z_{h})_{\Omega_{p}},~~~\forall z_{h}\in Q_{h}^{p}, \\
  \langle\Pi_{h}\mathbf{r}\cdot\mathbf{n}_{p},\mathbf{r}_{h}\cdot\mathbf{n}_{p}\rangle_{\Gamma} &=&
  \langle \mathbf{r}\cdot\mathbf{n}_{p},\mathbf{r}_{h}\cdot\mathbf{n}_{p}\rangle_{\Gamma},~~~\forall\mathbf{r}_{h}\in\mathbf{V}^{p}_{h}.
\end{eqnarray}
From \cite{P.G. Ciarlet}, it is well-known that there hold
\begin{eqnarray}
  &&\|\mathbf{r}-\Pi_{h}\mathbf{r}\|_{L^{2}(\Omega_{p})} \leq Ch^{r_{5}}\|\mathbf{r}\|_{H^{r_{5}}(\Omega_{p})},~~~1\leq r_{5}\leq k_{s}+1, \label{5.22}\\
 &&\|\Pi_{h}\mathbf{r}\|_{H(\mathrm{div};\Omega_{p})}\leq C(\|\mathbf{r}\|_{H^{\theta}(\Omega_{p})}+\|\nabla\cdot\mathbf{r}\|_{L^{2}(\Omega_{p})}),
\end{eqnarray}
where $k_{s}$ the degree of piecewise polynomial of the spaces $\mathbf{V}^{p}_{h}$.

For any $z\in H^{1}(\Omega_{p})$, we define its elliptic projection $\mathcal{S}_{h}:H^{1}(\Omega_{p})\rightarrow Q^{p}_{h}$ by
\begin{eqnarray}
  (K\nabla\mathcal{S}_{h}z,\nabla z_{h})_{\Omega_{p}} &=& (K\nabla z,\nabla z_{h})_{\Omega_{p}},~~~\forall z\in Q^{p}_{h}, \\
  (\mathcal{S}_{h}z,1)_{\Omega_{p}}&=& (z,1)_{\Omega_{p}},
\end{eqnarray}
and the projection operator satisfies the approximation property(cf. \cite{P.G. Ciarlet}):
\begin{equation}
  \|\mathcal{S}_{h}z-z\|_{L^{2}(\Omega_{p})}\leq Ch^{r_{6}}\|z\|_{H^{r_{6}+1}(\Omega_{p})},~~~1\leq r_{6}\leq s_{p}+1.\label{5.26}
\end{equation}

Next, we introduce the following notations
\begin{gather}\label{}
  E_{\mathbf{v}}^{n}:=\mathbf{v}(t_{n})-\mathbf{v}_{h}^{n},~~~~E_{p_{f}}^{n}:=p_{f}(t_{n})-p_{f,h}^{n},
  ~~~~E_{\mathbf{U}}^{n}:=\mathbf{U}(t_{n})-\mathbf{U}_{h}^{n},\nonumber \\
  E_{\xi}^{n}:=\xi(t_{n})-\xi_{h}^{n},~~~~E_{\eta}^{n}:=\eta(t_{n})-\eta_{h}^{n},~~~~E_{p_{p}}^{n}:=p_{p}(t_{n})-p_{p,h}^{n},
  ~~~~E_{\mathbf{q}}^{n}:=\mathbf{q}(t_{n})-\mathbf{q}_{h}^{n}.\nonumber
\end{gather}
It is easy to check that
\begin{align*}\label{}
  E_{p_{p}}^{n}=k_{1}E_{\xi}^{n}+k_{2}E_{\eta}^{n},~~~\nabla\cdot E_{\mathbf{U}}^{n}=k_{1}E_{\eta}^{n}-k_{3}E_{\xi}^{n},\\
  E_{\mathbf{q}}^{n}=-k\nabla E_{p_{p}}^{n},~~~E_{\sigma_{f}}^{n}=2\mu_{f}\mathbf{D}(E_{\mathbf{v}}^{n})-E_{p_{f}}^{n}\mathbf{I}.
\end{align*}

Also, we split the errors into two parts and denote as follows:
\begin{eqnarray*}
  &&E_{\mathbf{v}}^{n}:=\mathbf{v}(t_{n})-\mathcal{I}_{h}(\mathbf{v}(t_{n}))+\mathcal{I}_{h}(\mathbf{v}(t_{n}))-\mathbf{v}_{h}^{n}
  :=\Lambda_{\mathbf{v}}^{n}+\Theta_{\mathbf{v}}^{n}, \\
  &&E_{p_{f}}^{n}:=p_{f}(t_{n})-\mathcal{W}_{f,h}(p_{f}(t_{n}))+\mathcal{W}_{f,h}(p_{f}(t_{n}))-p_{f,h}^{n}
  :=\Lambda_{p_{f}}^{n}+\Theta_{p_{f}}^{n},\\
  &&E_{\mathbf{U}}^{n}:=\mathbf{U}(t_{n})-\mathcal{I}_{h}(\mathbf{U}(t_{n}))+\mathcal{I}_{h}(\mathbf{U}(t_{n}))-\mathbf{U}_{h}^{n}
  :=\Lambda_{\mathbf{U}}^{n}+\Theta_{\mathbf{U}}^{n}, \\
  &&E_{\xi}^{n}:=\xi(t_{n})-\mathcal{S}_{h}(\xi(t_{n}))+\mathcal{S}_{h}(\xi(t_{n}))-\xi_{h}^{n}
  :=\Lambda_{\xi}^{n}+\Theta_{\xi}^{n},\\
  &&E_{\xi}^{n}:=\xi(t_{n})-\mathcal{W}_{h}(\xi(t_{n}))+\mathcal{W}_{h}(\xi(t_{n}))-\xi_{h}^{n}
  :=\Psi_{\xi}^{n}+\Phi_{\xi}^{n},\\
  &&E_{\eta}^{n}:=\eta(t_{n})-\mathcal{S}_{h}(\eta(t_{n}))+\mathcal{S}_{h}(\eta(t_{n}))-\eta_{h}^{n}
  :=\Lambda_{\eta}^{n}+\Theta_{\eta}^{n},\\
  &&E_{\eta}^{n}:=\eta(t_{n})-\mathcal{W}_{h}(\eta(t_{n}))+\mathcal{W}_{h}(\eta(t_{n}))-\eta_{h}^{n}
  :=\Psi_{\eta}^{n}+\Phi_{\eta}^{n},\\
  &&E_{p_{p}}^{n}:=p_{p}(t_{n})-\mathcal{W}_{p,h}(p_{p}(t_{n}))+\mathcal{W}_{p,h}(p_{p}(t_{n}))-p_{p,h}^{n}
  :=\Lambda_{p_{p}}^{n}+\Theta_{p_{p}}^{n},\\
  &&E_{\mathbf{q}}^{n}:=\mathbf{q}(t_{n})-\Pi_{h}(\mathbf{q}(t_{n}))+\Pi_{h}(\mathbf{q}(t_{n}))-\mathbf{q}_{h}^{n}
  :=\Lambda_{\mathbf{q}}^{n}+\Theta_{\mathbf{q}}^{n},
\end{eqnarray*}

\begin{thrm}\label{thm5.1}
The solution of the problem (\ref{3.5})-(\ref{3.7}) satisfies the following error estimates:
\begin{eqnarray}
  && \frac{\sqrt{2}}{2}[\sqrt{c}\|\Theta_{\mathbf{U}}^{n}\|_{L^{\infty}(0,T;H^{1}(\Omega_{p}))}
  +\sqrt{k_{3}}\|\Phi_{\xi}^{n}\|_{L^{\infty}(0,T;L^{2}(\Omega_{p}))}+\sqrt{k_{2}}\|\Phi_{\eta}^{n}\|_{L^{\infty}(0,T;L^{2}(\Omega_{p}))}]\nonumber\\
  && +\sqrt{c^{p}}\|\Theta_{\mathbf{q}}^{n}\|_{L^{2}(0,T;L^{2}(\Omega_{p}))}
   +\sqrt{c}\|\Theta_{\mathbf{v}}^{n}\|_{L^{2}(0,T;H^{1}(\Omega_{f}))} \nonumber\\
  && +\|\Theta_{p_{f}}^{n}\|_{L^{2}(0,T;L^{2}(\Omega_{f}))}+\|\Theta_{p_{p}}^{n}\|_{L^{2}(0,T;L^{2}(\Omega_{p}))}
   +\|\Theta_{\xi}^{n}\|_{L^{2}(0,T;L^{2}(\Omega_{p}))}  \nonumber\\
  &&+\sqrt{\gamma_{f}\mu_{f}h^{-1}}\|(\Theta_{\mathbf{v}}^{n}-d_{t}\Theta_{\mathbf{U}}^{n}-\Theta_{\mathbf{q}}^{n})\cdot\mathbf{n}\|_{L^{2}(0,T;L^{2}(\Gamma))}\nonumber\\
  && +\sqrt{\gamma_{f}\mu_{f}h^{-1}}\|(\Theta_{\mathbf{v}}^{n}-d_{t}\Theta_{\mathbf{U}}^{n})\cdot\tau\|_{L^{2}(0,T;L^{2}(\Gamma))} \nonumber\\
  &\leq & C\sqrt{\mathrm{exp}(T)} (h^{r_{3}} \|\mathbf{v}\|_{L^{2}(0,T;H^{r_{3}+1}(\Omega_{f}))}+h^{r_{1}}\|p_{f}\|_{L^{2}(0,T;H^{r_{1}}(\Omega_{f}))}
   +h^{r_{2}}\|p_{p}\|_{L^{2}(0,T;H^{r_{2}}(\Omega_{p}))} \nonumber\\
   && +h^{r_{5}}\|\mathbf{q}\|_{L^{2}(0,T;H^{r_{5}}(\Omega_{p}))}+ h^{r_{4}}\|\mathbf{U}\|_{L^{\infty}(0,T;H^{r_{4}+1}(\Omega_{p}))}+h^{r_{4}}\|d_{t}\mathbf{U}\|_{L^{2}(0,T;H^{r_{4}+1}(\Omega_{p}))} \label{eq210723-1}\\
   && +h^{r_{6}}(\|\xi\|_{L^{2}(0,T;H^{r_{6}+1}(\Omega_{p}))}+\|\xi\|_{L^{\infty}(0,T;H^{r_{6}+1}(\Omega_{p}))}
   +\|d_{t}\xi\|_{L^{2}(0,T;H^{r_{6}+1}(\Omega_{p}))}) ) \nonumber\\
    &&+\sqrt{\frac{(1+\Delta t)\Delta t\gamma_{f}\mu_{f}}{2h\epsilon_{1}}}(\|d_{t}\mathbf{v}_{h}^{n}\cdot\tau\|_{L^{2}(0,T;L^{2}(\Gamma))}
    +\sqrt{2+\frac{\gamma'_{stab}}{1+\Delta t}}\|d_{t}\mathbf{v}_{h}^{n}\cdot\mathbf{n}\|_{L^{2}(0,T;L^{2}(\Gamma))}\nonumber\\
   && +\sqrt{(1+\frac{\gamma'_{stab}}{1+\Delta t})}\|d_{t}\mathbf{q}_{h}^{n}\cdot\mathbf{n}\|_{L^{2}(0,T;L^{2}(\Gamma))}
   +\|d_{tt}\mathbf{U}_{h}^{n}\cdot\mathbf{n}\|_{L^{2}(0,T;L^{2}(\Gamma))} )\nonumber\\
   && +\sqrt{\frac{\Delta t^{2}}{2\epsilon_{1}}+\frac{\gamma_{stab}h\Delta t}{2\gamma_{f}\mu_{f}\epsilon_{1}}}\|d_{t}p_{f,h}^{n}\|_{L^{2}(0,T;L^{2}(\Gamma))}
   +\sqrt{\frac{\Delta tk_{1}^{2}}{2\epsilon_{1}}}\|d_{t}\eta_{h}^{n}\|_{L^{2}(0,T;L^{2}(\Omega_{p}))}\nonumber\\
   &&+\sqrt{\frac{\Delta t^{2}}{3\epsilon_{1}}}\|d_{tt}\eta_{h}^{n}\|_{L^{2}(0,T;H^{1}(\Omega_{p})')}+\sqrt{\frac{\mu_{f}}{\epsilon_{1}}\Delta t^{2}C}\|d_{t}\mathbf{v}_{h}^{n}\|_{L^{2}(0,T;H^{1}(\Omega_{f}))},\nonumber
\end{eqnarray}
where
\begin{gather*}
  0\leq r_{1}\leq s_{f}+1,~~~0\leq r_{2}\leq s_{p}+1,~~~0\leq r_{3}\leq k_{f}, \\
  0\leq r_{4}\leq k_{p},~~~1\leq r_{5}\leq k_{s}+1,~~~1\leq r_{6}\leq s_{p}+1.
\end{gather*}
\end{thrm}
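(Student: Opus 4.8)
The plan is to transplant the energy/stability arguments of Theorem~\ref{thm2.1} and Theorem~\ref{thm4.1} onto the error equations, in the classical ``projection $+$ discrete energy $+$ discrete Gronwall'' pattern. First I would derive the error equations. Because Nitsche's coupling is consistent --- the exact solution of Definition~\ref{defn2.2} satisfies the interface conditions (\ref{1.13})--(\ref{1.18}) (or (\ref{1.15})), so $\mathbf{I}_\Gamma^{*}$ and $\mathbf{S}_\Gamma^{*,\varsigma}$ vanish when evaluated at it --- subtracting the monolithic form (\ref{4.1}) of the loosely-coupled scheme from (\ref{2.15})--(\ref{2.20}) taken at $t=t_n$ yields an identity for $(E_{\mathbf v}^n,E_{p_f}^n,E_{\mathbf U}^n,E_{\mathbf q}^n,E_\xi^n,E_\eta^n)$ whose right-hand side collects two groups of residuals: the time-truncation residuals $d_t\mathbf U(t_n)-\partial_t\mathbf U(t_n)$ and $d_t\eta(t_n)-\partial_t\eta(t_n)$, each $O(\Delta t)$ in the appropriate norm by Taylor expansion with integral remainder; and the splitting residuals $\mathcal T_{1,a}^n,\dots,\mathcal T_4^n$ of (\ref{4.1}), built from increments $\mathbf v_h^n-\mathbf v_h^{n-1}=\Delta t\,d_t\mathbf v_h^n$, $\eta_h^n-\eta_h^{n-1}$, etc.

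Next I would split each error as $E=\Lambda+\Theta$ via the projections $\mathcal I_h$, $\mathcal W_{f,h}$, $\mathcal W_{p,h}$, $\mathcal S_h$, $\Pi_h$ introduced above (using the $\mathcal S_h$-decompositions $\Lambda_\xi,\Lambda_\eta$ for the diffusion/$L^2$-in-time part and the $\mathcal W_h$-decompositions $\Psi_\xi,\Psi_\eta$ for the $L^\infty$-in-time part, as reflected in the statement). The defining orthogonalities of these operators are exactly what kills the dangerous $\Lambda$-terms: $(\nabla\cdot\Lambda_{\mathbf v}^n,\psi_{f,h})=0$, $(\Lambda_{p_f}^n,\nabla\cdot\varphi_{f,h})=0$, $(\nabla\cdot\Lambda_{\mathbf U}^n,w_h)=0$, $\langle\Lambda_{\mathbf q}^n\!\cdot\!\mathbf n_p,\mathbf r_h\!\cdot\!\mathbf n_p\rangle_\Gamma=0$, $(\nabla\cdot\Lambda_{\mathbf q}^n,z_h)=0$, and the elliptic-projection identity for $\Lambda_\eta^n$; the surviving $\Lambda$-contributions are plain interpolation errors controlled by (\ref{5.14})--(\ref{5.26}). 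I would then test the $\Theta$-equation with the same combination used in the stability proofs, $\varphi_{f,h}=\Theta_{\mathbf v}^n$, $\psi_{f,h}=\Theta_{p_f}^n$, $\mathbf r_h=\Theta_{\mathbf q}^n$, $w_h=d_t\Theta_\xi^n$, $\varphi_{p,h}=d_t\Theta_{\mathbf U}^n$, $z_h=k_1\Theta_\xi^n+k_2\Theta_\eta^n$, so that the left-hand side reproduces $d_t\mathbf E^n_{p,h}$ plus the dissipative terms $2\mu_f\|\mathbf D(\Theta_{\mathbf v}^n)\|^2$, $k^{-1}\|\Theta_{\mathbf q}^n\|^2$, the inertial $\tfrac{\Delta t}{2}(\cdots d_t\cdots)$ terms, and the Nitsche penalties $\mu_f h^{-1}\|\cdot\|_{L^2(\Gamma)}^2$, just as in (\ref{2.13}) and (\ref{4.2}).

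The core work is bounding the right-hand side. The Nitsche consistency term $\int_\Gamma 2\mu_f\mathbf D(\Theta_{\mathbf v}^n)\mathbf n\cdot(\cdots)$ is handled by Cauchy--Schwarz, the trace inverse inequality (\ref{2.11}), and Young's inequality with $\overline{\epsilon}_f^1,\underline{\epsilon}_f^1$, reproducing the positive coefficient $1-(\varsigma+1)\overline{\epsilon}_f^1 C_{TI}-\underline{\epsilon}_f^1/4$; the interpolation errors are split by Young's inequality --- part absorbed into the coercive/penalty terms on the left, part estimated by (\ref{5.14})--(\ref{5.26}) to form the $h^{r_i}$ block of (\ref{eq210723-1}). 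The splitting residuals $\mathcal T_{1,a}^n,\dots,\mathcal T_4^n$ are treated precisely as in Lemma~\ref{lemm4.1}: telescoping the $\Delta t\,d_t(\cdots)\|\cdot\|^2$ pieces against the stabilization operators $s_{f,p},s_{f,q},s_{f,v}$, using $\Delta t<Ch$ to tame the leftover $\Delta t^2 h^{-1}\|d_t(\cdot)\|_{L^2(\Gamma)}^2$ factors, and leaving the genuinely explicit remainders (those $\propto\|d_t\mathbf v_h^n\cdot\tau\|$, $\|d_t p_{f,h}^n\|$, $\|d_{tt}\mathbf U_h^n\cdot\mathbf n\|$, $\|d_{tt}\eta_h^n\|_{H^1(\Omega_p)'}$, $\|d_t\mathbf v_h^n\|_{H^1(\Omega_f)}$) on the right as the $\sqrt{\Delta t/\epsilon_1}(\cdots)$ block. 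Finally I would sum over $n=1,\dots,N$, multiply by $\Delta t$, discard the nonnegative dissipation/penalty terms (nonnegative thanks to the parameter inequalities of Theorem~\ref{thm4.1}), apply the discrete Gronwall inequality to absorb the $\Delta t\sum\|\Theta^n\|^2$-type terms --- this produces the $\sqrt{\exp(T)}$ factor --- and insert (\ref{5.14})--(\ref{5.26}) to reach (\ref{eq210723-1}). I expect the main obstacle to be the bookkeeping for the loosely-coupled splitting terms: verifying that after telescoping the stabilization operators exactly neutralize the destabilizing increments, that $\Delta t<Ch$ suffices for every leftover interface term, and that all Young parameters can be chosen simultaneously in the ranges required to keep the left-hand side coercive.
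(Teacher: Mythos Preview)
Your overall strategy --- derive the error equation from consistency of the Nitsche coupling, split via projections, test with the energy combination, treat the splitting residuals as in Lemma~\ref{lemm4.1}, sum and Gronwall --- is essentially the paper's proof. Two points deserve comment.

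First, a genuine gap: your energy testing alone cannot produce the pressure terms $\|\Theta_{p_{f}}^{n}\|_{L^{2}(0,T;L^{2}(\Omega_{f}))}$, $\|\Theta_{p_{p}}^{n}\|_{L^{2}(0,T;L^{2}(\Omega_{p}))}$, $\|\Theta_{\xi}^{n}\|_{L^{2}(0,T;L^{2}(\Omega_{p}))}$ that sit on the left of (\ref{eq210723-1}). When you test with $\psi_{f,h}=\Theta_{p_{f}}^{n}$ and $\varphi_{f,h}=\Theta_{\mathbf v}^{n}$, the pair $-(\Theta_{p_{f}}^{n},\nabla\cdot\Theta_{\mathbf v}^{n})+(\nabla\cdot\Theta_{\mathbf v}^{n},\Theta_{p_{f}}^{n})$ cancels; likewise for $\Theta_{p_{p}}^{n}$ and $\Theta_{\xi}^{n}$. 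The paper recovers these by a separate inf--sup step: it bounds $\|(\Theta_{p_{f}}^{n},\Theta_{p_{p}}^{n},\Theta_{\xi}^{n})\|$ by a supremum over $(\varphi_{f,h},\mathbf r_{h},\varphi_{p,h})$ of the divergence pairings plus the interface stress term, then uses the error equation to replace those pairings by already-controlled quantities ($\|\Theta_{\mathbf v}^{n}\|_{H^{1}}$, $\|\Theta_{\mathbf U}^{n}\|_{H^{1}}$, $\|\Theta_{\mathbf q}^{n}\|_{L^{2}}$, the $\Lambda$-terms, and the Nitsche penalties). This inequality, multiplied by a small $\epsilon_{2}$ and added to the energy bound, is what puts the pressures on the left. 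You need to insert this argument.

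Second, two minor deviations from the paper that do not change the outcome: the paper tests with $w_{h}=\Phi_{\xi}^{n}$ (after writing the $\xi$-equation with $d_{t}$ on the unknowns) rather than your $w_{h}=d_{t}\Theta_{\xi}^{n}$, and it handles the cross terms $2\mu_{p}(\mathbf D(\Lambda_{\mathbf U}^{n}),d_{t}\mathbf D(\Theta_{\mathbf U}^{n}))$ and $(\Lambda_{\xi}^{n},d_{t}\nabla\cdot\Theta_{\mathbf U}^{n})$ by integration by parts in time (shifting $d_{t}$ onto the $\Lambda$-factor), which is why $\|\mathbf U\|_{L^{\infty}(H^{r_{4}+1})}$, $\|d_{t}\mathbf U\|_{L^{2}(H^{r_{4}+1})}$, $\|\xi\|_{L^{\infty}(H^{r_{6}+1})}$, $\|d_{t}\xi\|_{L^{2}(H^{r_{6}+1})}$ all appear on the right of (\ref{eq210723-1}). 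Your outline glosses over this, but it is routine once noticed.
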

\begin{proof}
Subtracting (\ref{3.5})-(\ref{3.7}) from (\ref{2.15})-(\ref{2.20}), summing the equations, 
using the definition of the projection operators~$\mathcal{W}_{f,h},~\mathcal{W}_{p,h},~\mathcal{I}_{h},~\Pi_{h},~\mathcal{S}_{h},~\mathbf{I}_{h}$ and setting $
	\varphi_{f,h}=\Theta_{\mathbf{v}}^{n}$, $\psi_{f,h}=\Theta_{p_{f}}^{n}$, $\varphi_{p,h}=d_{t}\Theta_{\mathbf{U}}^{n}$, 
	$w_{h}=\Phi_{\xi}^{n}$, $ \mathbf{r}_{h}=\Theta_{\mathbf{q}}^{n}$, $z_{h}=\Theta_{p_{p}}^{n}=k_{1}\Phi_{\xi}^{n}+k_{2}\Phi_{\eta}^{n}$,
we have
\begin{eqnarray}\label{6.23}
   && 2\mu_{p}(\mathbf{D}(\Lambda_{\mathbf{U}}^{n}),\mathbf{D}(\varphi_{p,h}))_{\Omega_{p}}+2\mu_{p}(\mathbf{D}(\Theta_{\mathbf{U}}^{n}),\mathbf{D}(\varphi_{p,h}))_{\Omega_{p}}
   -(\Psi_{\xi}^{n},\nabla\cdot\varphi_{p,h})_{\Omega_{p}}\nonumber\\
   &&-(\Phi_{\xi}^{n},\nabla\cdot\varphi_{p,h})_{\Omega_{p}}+(\nabla\cdot d_t\Theta_{\mathbf{U}}^{n},w_{h})_{\Omega_{p}} 
   +k_{3}(d_t\Phi_{\xi}^{n},w_{h})_{\Omega_{p}}-k_{1}(d_t\Phi_{\eta}^{n},w_{h})_{\Omega_{p}}\nonumber\\
   && -(\Theta_{p_{p}}^{n},\nabla\cdot\mathbf{r}_{h})_{\Omega_{p}}+k^{-1}(\Lambda_{\mathbf{q}}^{n},\mathbf{r}_{h})_{\Omega_{p}}
   +k^{-1}(\Theta_{\mathbf{q}}^{n},\mathbf{r}_{h})_{\Omega_{p}}\nonumber\\
   && +(d_{t}\Phi_{\eta}^{n},z_{h})_{\Omega_{p}}+(\nabla\cdot\Theta_{\mathbf{q}}^{n},z_{h})_{\Omega_{p}}
   +(2\mu_{f}\mathbf{D}(\Lambda_{\mathbf{v}}^{n}),\mathbf{D}(\varphi_{f,h}))_{\Omega_{f}}
   +(2\mu_{f}\mathbf{D}(\Theta_{\mathbf{v}}^{n}),\mathbf{D}(\varphi_{f,h}))_{\Omega_{f}}\nonumber\\
   &&-(\Lambda_{p_{f}}^{n},\nabla\cdot\varphi_{f,h})_{\Omega_{f}}-(\Theta_{p_{f}}^{n},\nabla\cdot\varphi_{f,h})_{\Omega_{f}}
   +(\nabla\cdot \Theta_{\mathbf{v}}^{n},\psi_{f,h})_{\Omega_{f}}\nonumber\\
   && -s_{f,p}(d_{t}p_{f,h}^{n},\psi_{f,h})
   -s_{f,q}(d_{t}\mathbf{q}^{n}_{h}\cdot\mathbf{n},\mathbf{r}_{h}\cdot\mathbf{n})-
   s_{f,v}(d_{t}\mathbf{v}^{n}_{h}\cdot\mathbf{n},\varphi_{f,h}\cdot\mathbf{n})\nonumber\\
   &&-\int_{\Gamma}\mathbf{n}\cdot \Theta_{\sigma_{f}}^{n}\mathbf{n}(\varphi_{f,h}-\varphi_{p,h}-\mathbf{r}_{h})\cdot\mathbf{n}~dx \nonumber\\ &&-\int_{\Gamma}\mathbf{n}\cdot\sigma_{f,h}(\varsigma\varphi_{f,h},-\psi_{f,h})\mathbf{n}(\Theta_{\mathbf{v}}^{n}-d_{t}\Theta_{\mathbf{U}}^{n}-\Theta_{\mathbf{q}}^{n})\cdot\mathbf{n}~dx\nonumber\\
   &&-\int_{\Gamma}\tau\cdot \Theta_{\sigma_{f}}^{n}\mathbf{n}(\varphi_{f,h}-\varphi_{p,h})\cdot\tau~dx -\int_{\Gamma}\tau\cdot\sigma_{f,h}(\varsigma\varphi_{f,h},-\psi_{f,h})\mathbf{n}(\Theta_{\mathbf{v}}^{n}-d_{t}\Theta_{\mathbf{U}}^{n})\cdot\tau~dx\nonumber\\
   &&-\int_{\Gamma}\mathbf{n}\cdot \Lambda_{\sigma_{f}}^{n}\mathbf{n}(\varphi_{f,h}-\varphi_{p,h}-\mathbf{r}_{h})\cdot\mathbf{n}~dx
   -\int_{\Gamma}\tau\cdot \Lambda_{\sigma_{f}}^{n}\mathbf{n}(\varphi_{f,h}-\varphi_{p,h})\cdot\tau~dx \\
   &&+\int_{\Gamma}\gamma_{f}\mu_{f}h^{-1}(\Theta_{\mathbf{v}}^{n}-d_{t}\Theta_{\mathbf{U}}^{n}-\Theta_{\mathbf{q}}^{n})\cdot\mathbf{n}(\varphi_{f,h}-\varphi_{p,h}-\mathbf{r}_{h})\cdot\mathbf{n}~dx\nonumber\\
   &&+\int_{\Gamma}\gamma_{f}\mu_{f}h^{-1}(\Theta_{\mathbf{v}}^{n}-d_{t}\Theta_{\mathbf{U}}^{n})\cdot\tau(\varphi_{f,h}-\varphi_{p,h})\cdot\tau~dx\nonumber\\
   &=&
   -\int_{\Gamma}\gamma_{f}\mu_{f}h^{-1}((\mathbf{v}_{h}^{n}-\mathbf{v}_{h}^{n-1})-d_{t}(\mathbf{U}_{h}^{n}-\mathbf{U}_{h}^{n-1}))\cdot\mathbf{n}(-\mathbf{r}_{h})\cdot\mathbf{n}~dx\nonumber\\
   &&-\int_{\Gamma}\gamma_{f}\mu_{f}h^{-1}((\mathbf{v}_{h}^{n}-\mathbf{v}_{h}^{n-1})-(\mathbf{q}_{h}^{n}-\mathbf{q}_{h}^{n-1}))
   \cdot\mathbf{n}(-\varphi_{p,h})\cdot\mathbf{n}~dx\nonumber\\
   &&-\int_{\Gamma}\gamma_{f}\mu_{f}h^{-1}(\mathbf{v}_{h}^{n}-\mathbf{v}_{h}^{n-1})\cdot\tau(-\varphi_{p,h})\cdot\tau~dx\nonumber\\
   &&+\int_{\Omega_{p}} k_{1}(\eta_{h}^{n}-\eta_{h}^{n-1})w_{h}~d\mathbf{x}+\int_{\Omega_{p}}\mathbf{R}_{h}^{n}\cdot z_{h}~d\mathbf{x}\nonumber\\
   &&+\int_{\Gamma}2\mu_{f}(\mathbf{D}(\mathbf{v}_{h}^{n})-\mathbf{D}(\mathbf{v}_{h}^{n-1}))\mathbf{n}\cdot [\mathbf{n}(\varphi_{f,h}-\mathbf{r}_{h}-\varphi_{p,h})\cdot\mathbf{n}+\tau(\varphi_{f,h}-\varphi_{p,h})\cdot\tau]~dx\nonumber\\
   &&-\int_{\Gamma}\mathbf{n}\cdot(p_{f,h}^{n}-p_{f,h}^{n-1})\mathbf{n}(\varphi_{f,h}-\mathbf{r}_{h}-\varphi_{p,h})\cdot\mathbf{n}~dx,\nonumber
\end{eqnarray}
where $
\mathbf{R}_{h}^{n}:=-\frac{1}{\Delta t}\int_{t_{n-1}}^{t_{n}}(t-t_{n-1})d_{tt}\eta_{h}(t)~dt$.

Integrating (\ref{6.23}) over $[0, t]$, we get
\begin{eqnarray}\label{6.24}
   &&\varepsilon_{h}^{t}+c^{p}\|\Theta_{\mathbf{q}}^{n}\|^{2}_{L^{2}(0,t;L^{2}(\Omega_{p}))}+c\|\Theta_{\mathbf{v}}^{n}\|^{2}_{L^{2}(0,t;H^{1}(\Omega_{f}))}\nonumber\\
   &&+\gamma_{f}\mu_{f}h^{-1}\|(\Theta_{\mathbf{v}}^{n}-d_{t}\Theta_{\mathbf{U}}^{n}-\Theta_{\mathbf{q}}^{n})\cdot\mathbf{n}\|^{2}_{L^{2}(0,t;L^{2}(\Gamma))}
   +\gamma_{f}\mu_{f}h^{-1}\|(\Theta_{\mathbf{v}}^{n}-d_{t}\Theta_{\mathbf{U}}^{n})\cdot\tau\|^{2}_{L^{2}(0,t;L^{2}(\Gamma))}\nonumber\\
   &&+\frac{\Delta t}{2}(c\|d_{t}\Theta_{\mathbf{U}}^{n}\|^{2}_{L^{2}(0,t;H^{1}(\Omega_{p}))}+k_{3}\|d_{t}\Phi_{\xi}^{n}\|^{2}_{L^{2}(0,t;L^{2}(\Omega_{p}))}
   +k_{2}\|d_{t}\Phi_{\eta}^{n}\|^{2}_{L^{2}(0,t;L^{2}(\Omega_{p}))})\nonumber\\
   &\leq& \varepsilon_{h}^{0} +\int_{0}^{t}(s_{f,p}(d_{t}p_{f,h}^{n},\Theta_{p_{f}}^{n})
   +s_{f,q}(d_{t}\mathbf{q}^{n}_{h}\cdot\mathbf{n},\Theta_{\mathbf{q}}^{n}\cdot\mathbf{n})+
   s_{f,v}(d_{t}\mathbf{v}^{n}_{h}\cdot\mathbf{n},\Theta_{\mathbf{v}}^{n}\cdot\mathbf{n}))~dt\nonumber\\
   &&+\int_{0}^{t}((2\mu_{f}\mathbf{D}(\Lambda_{\mathbf{v}}^{n}),\mathbf{D}(\Theta_{\mathbf{v}}^{n}))_{\Omega_{f}}
   +(\Lambda_{p_{f}}^{n},\nabla\cdot\Theta_{\mathbf{v}}^{n})_{\Omega_{f}}+k^{-1}(\Lambda_{\mathbf{q}}^{n},\Theta_{\mathbf{q}}^{n})_{\Omega_{p}})~dt \nonumber\\
   && +\int_{0}^{t}(2\mu_{p}(\mathbf{D}(\Lambda_{\mathbf{U}}^{n}),d_{t}\mathbf{D}(\Theta_{\mathbf{U}}^{n}))_{\Omega_{p}}
   +(\Lambda_{\xi}^{n},d_{t}\nabla\cdot\Theta_{\mathbf{U}}^{n})_{\Omega_{p}})~dt \nonumber\\
   &&+\int_{0}^{t}\int_{\Gamma}\mathbf{n}\cdot \Theta_{\sigma_{f}}^{n}\mathbf{n}(\Theta_{\mathbf{v}}^{n}-d_{t}\Theta_{\mathbf{U}}^{n}-\Theta_{\mathbf{q}}^{n})\cdot\mathbf{n}~dxdt \nonumber\\ &&+\int_{0}^{t}\int_{\Gamma}\mathbf{n}\cdot\sigma_{f,h}(\varsigma\Theta_{\mathbf{v}}^{n},-\Theta_{p_{f}}^{n})\mathbf{n}(\Theta_{\mathbf{v}}^{n}
   -d_{t}\Theta_{\mathbf{U}}^{n}-\Theta_{\mathbf{q}}^{n})\cdot\mathbf{n}~dxdt\\
   &&+\int_{0}^{t}\int_{\Gamma}\mathbf{n}\cdot \Lambda_{\sigma_{f}}^{n}\mathbf{n}(\Theta_{\mathbf{v}}^{n}-d_{t}\Theta_{\mathbf{U}}^{n}-\Theta_{\mathbf{q}}^{n})\cdot\mathbf{n}~dxdt
   +\int_{0}^{t}\int_{\Gamma}\tau\cdot \Lambda_{\sigma_{f}}^{n}\mathbf{n}(\Theta_{\mathbf{v}}^{n}-d_{t}\Theta_{\mathbf{U}}^{n})\cdot\tau~dxdt \nonumber\\
   &&+\int_{0}^{t}\int_{\Gamma}\tau\cdot \Theta_{\sigma_{f}}^{n}\mathbf{n}(\Theta_{\mathbf{v}}^{n}-d_{t}\Theta_{\mathbf{U}}^{n})\cdot\tau~dxdt +\int_{0}^{t}\int_{\Gamma}\tau\cdot\sigma_{f,h}(\varsigma\Theta_{\mathbf{v}}^{n},-\Theta_{p_{f}}^{n})\mathbf{n}(\Theta_{\mathbf{v}}^{n}
   -d_{t}\Theta_{\mathbf{U}}^{n})\cdot\tau~dxdt\nonumber\\
   && - \int_{0}^{t}\int_{\Gamma}\gamma_{f}\mu_{f}h^{-1}(\mathbf{v}_{h}^{n}-\mathbf{v}_{h}^{n-1})\cdot\tau(-d_{t}\Theta_{\mathbf{U}}^{n})\cdot\tau~dxdt \nonumber\\
   && -\int_{0}^{t}\int_{\Gamma}\gamma_{f}\mu_{f}h^{-1}((\mathbf{v}_{h}^{n}-\mathbf{v}_{h}^{n-1})
   -d_{t}(\mathbf{U}_{h}^{n}-\mathbf{U}_{h}^{n-1}))\cdot\mathbf{n}(-\Theta_{\mathbf{q}}^{n})\cdot\mathbf{n}~dxdt \nonumber\\
   &&-\int_{0}^{t}\int_{\Gamma}\gamma_{f}\mu_{f}h^{-1}((\mathbf{v}_{h}^{n}-\mathbf{v}_{h}^{n-1})-(\mathbf{q}_{h}^{n}-\mathbf{q}_{h}^{n-1}))
   \cdot\mathbf{n}(-d_{t}\Theta_{\mathbf{U}}^{n})\cdot\mathbf{n}~dxdt \nonumber\\
   &&+\int_{0}^{t}\int_{\Omega_{p}} k_{1}(\eta_{h}^{n}-\eta_{h}^{n-1})d_{t}\Theta_{\xi}^{n}~d\mathbf{x}dt
   +\int_{0}^{t}\int_{\Omega_{p}}\mathbf{R}_{h}^{n}\cdot\Theta_{p_{p}}^{n}~d\mathbf{x}dt \nonumber\\
   &&+\int_{0}^{t}\int_{\Gamma}2\mu_{f}(\mathbf{D}(\mathbf{v}_{h}^{n})-\mathbf{D}(\mathbf{v}_{h}^{n-1}))\mathbf{n}\cdot [\mathbf{n}(\Theta_{\mathbf{v}}^{n}-d_{t}\Theta_{\mathbf{U}}^{n}-\Theta_{\mathbf{q}}^{n})\cdot\mathbf{n}
   +\tau(\Theta_{\mathbf{v}}^{n}-d_{t}\Theta_{\mathbf{U}}^{n})\cdot\tau] ~dxdt \nonumber\\
   &&-\int_{0}^{t}\int_{\Gamma}\mathbf{n}\cdot(p_{f,h}^{n}-p_{f,h}^{n-1})\mathbf{n}(\Theta_{\mathbf{v}}^{n}-d_{t}\Theta_{\mathbf{U}}^{n}-\Theta_{\mathbf{q}}^{n})\cdot\mathbf{n}~dxdt,\nonumber
\end{eqnarray}
where  $\Theta_{p_{p}}^{n}:=k_{1}\Phi_{\xi}^{n}+k_{2}\Phi_{\eta}^{n}$ and $\varepsilon_{h}^{t}:=\frac{1}{2}[c\|\Theta_{\mathbf{U}}^{t}\|_{H^{1}(\Omega_{p})}^{2}+k_{3}\|\Phi_{\xi}^{t}\|_{L^{2}(\Omega_{p})}^{2}
	+k_{2}\|\Phi_{\eta}^{t}\|_{L^{2}(\Omega_{p})}^{2}]$.

Using Cauchy-Schwarz inequality and Young inequality, we have
\begin{eqnarray}\label{6.25}
   && (2\mu_{f}\mathbf{D}(\Lambda_{\mathbf{v}}^{n}),\mathbf{D}(\Theta_{\mathbf{v}}^{n}))_{\Omega_{f}}
   +(\Lambda_{p_{f}}^{n},\nabla\cdot\Theta_{\mathbf{v}}^{n})_{\Omega_{f}}+k^{-1}(\Lambda_{\mathbf{q}}^{n},\Theta_{\mathbf{q}}^{n})_{\Omega_{p}}\nonumber \\
   &\leq&C\epsilon_{1}^{-1}(\|\Lambda_{\mathbf{v}}^{n}\|^{2}_{H^{1}(\Omega_{f})}+\|\Lambda_{p_{f}}^{n}\|^{2}_{L^{2}(\Omega_{f})}
   +\|\Lambda_{\mathbf{q}}^{n}\|^{2}_{L^{2}(\Omega_{p})})\nonumber \\
   && +\epsilon_{1}(\|\Theta_{\mathbf{v}}^{n}\|^{2}_{H^{1}(\Omega_{f})}+\|\nabla\cdot\Theta_{\mathbf{v}}^{n}\|^{2}_{L^{2}(\Omega_{f})}
   +\|\Theta_{\mathbf{q}}^{n}\|^{2}_{L^{2}(\Omega_{p})})  \\
   &\leq&C\epsilon_{1}^{-1}(\|\Lambda_{\mathbf{v}}^{n}\|^{2}_{H^{1}(\Omega_{f})}+\|\Lambda_{p_{f}}^{n}\|^{2}_{L^{2}(\Omega_{f})}
   +\|\Lambda_{\mathbf{q}}^{n}\|^{2}_{L^{2}(\Omega_{p})})\nonumber \\
   && +\epsilon_{1}(\|\Theta_{\mathbf{v}}^{n}\|^{2}_{H^{1}(\Omega_{f})}+\|\Theta_{\mathbf{v}}^{n}\|^{2}_{H^{1}(\Omega_{f})}
   +\|\Theta_{\mathbf{q}}^{n}\|^{2}_{L^{2}(\Omega_{p})}).\nonumber
\end{eqnarray}
Using integration by parts in time, we obtain
\begin{eqnarray}\label{6.27}
  &&\int_{0}^{t}2\mu_{p}(\mathbf{D}(\Lambda_{\mathbf{U}}^{n}),d_{t}\mathbf{D}(\Theta_{\mathbf{U}}^{n}))_{\Omega_{p}}~dt
   +\int_{0}^{t}(\Lambda_{\xi}^{n},d_{t}\nabla\cdot\Theta_{\mathbf{U}}^{n})_{\Omega_{p}}~dt \nonumber\\
  &\leq&  C(\epsilon_{1}^{-1}\|\Lambda_{\mathbf{U}}^{t}\|^{2}_{H^{1}(\Omega_{p})}+\|d_{t}\Lambda_{\mathbf{U}}^{n}\|^{2}_{L^{2}(0,t;H^{1}(\Omega_{p}))}  +\epsilon_{1}^{-1}\|\Lambda_{\xi}^{t}\|^{2}_{L^{2}(\Omega_{p})} \nonumber\\
  && +\|d_{t}\Lambda_{\xi}^{n}\|^{2}_{L^{2}(0,t;L^{2}(\Omega_{p}))}  )(\epsilon_{1}\|\Theta_{\mathbf{U}}^{t}\|^{2}_{H^{1}(\Omega_{p})}+\|\Theta_{\mathbf{U}}^{n}\|^{2}_{L^{2}(0,t;H^{1}(\Omega_{p}))}).\nonumber
\end{eqnarray}
the interface terms can be bounded as follows,
\begin{eqnarray}\label{6.28}
  && \int_{\Gamma}\mathbf{n}\cdot \Theta_{\sigma_{f}}^{n}\mathbf{n}(\Theta_{\mathbf{v}}^{n}
  -d_{t}\Theta_{\mathbf{U}}^{n}-\Theta_{\mathbf{q}}^{n})\cdot\mathbf{n}~dx \nonumber\\
  &&+\int_{\Gamma}\mathbf{n}\cdot\sigma_{f,h}(\varsigma\Theta_{\mathbf{v}}^{n},-\Theta_{p_{f}}^{n})\mathbf{n}(\Theta_{\mathbf{v}}^{n}
   -d_{t}\Theta_{\mathbf{U}}^{n}-\Theta_{\mathbf{q}}^{n})\cdot\mathbf{n}~dx  \nonumber\\
  &=& (1+\varsigma)\int_{\Gamma}\mathbf{n}\cdot (2\mu_{f}\mathbf{D}(\Theta_{\mathbf{v}}^{n}))\mathbf{n}(\Theta_{\mathbf{v}}^{n}
  -d_{t}\Theta_{\mathbf{U}}^{n}-\Theta_{\mathbf{q}}^{n})\cdot\mathbf{n}~dx \\
  &\leq&  2\mu_{f}(1+\varsigma)\|\mathbf{D}(\Theta_{\mathbf{v}}^{n})\cdot\mathbf{n}\|_{L^{2}(\Gamma)}\|(\Theta_{\mathbf{v}}^{n}
  -d_{t}\Theta_{\mathbf{U}}^{n}-\Theta_{\mathbf{q}}^{n})\cdot\mathbf{n}\|_{L^{2}(\Gamma)} \nonumber\\
  &\leq& \mu_{f}(1+\varsigma)\epsilon_{1}C_{TI}C\|\Theta_{\mathbf{v}}^{n}\|_{H^{1}(\Omega_{f})}^{2}
  +\mu_{f}(1+\varsigma)(\epsilon_{1}h)^{-1}\|(\Theta_{\mathbf{v}}^{n}
  -d_{t}\Theta_{\mathbf{U}}^{n}-\Theta_{\mathbf{q}}^{n})\cdot\mathbf{n}\|_{L^{2}(\Gamma)}^{2}.\nonumber
\end{eqnarray}
As for the interface terms, we have
\begin{eqnarray}\label{6.29}
\begin{aligned}
  & \int_{\Gamma}\tau\cdot \Theta_{\sigma_{f}}^{n}\mathbf{n}(\Theta_{\mathbf{v}}^{n}
  -d_{t}\Theta_{\mathbf{U}}^{n})\cdot\tau~dx
  +\int_{\Gamma}\tau\cdot\sigma_{f,h}(\varsigma\Theta_{\mathbf{v}}^{n},-\Theta_{p_{f}}^{n})\mathbf{n}(\Theta_{\mathbf{v}}^{n}
   -d_{t}\Theta_{\mathbf{U}}^{n})\cdot\tau~dx  \\
  \leq& \mu_{f}(1+\varsigma)\epsilon_{1}C_{TI}C\|\Theta_{\mathbf{v}}^{n}\|_{H^{1}(\Omega_{f})}^{2}
  +\mu_{f}(1+\varsigma)(\epsilon_{1}h)^{-1}\|(\Theta_{\mathbf{v}}^{n}
  -d_{t}\Theta_{\mathbf{U}}^{n})\cdot\tau\|_{L^{2}(\Gamma)}^{2}.
\end{aligned}
\end{eqnarray}
\begin{eqnarray}\label{6.30}
\begin{aligned}
  &\int_{\Gamma}\mathbf{n}\cdot \Lambda_{\sigma_{f}}^{n}\mathbf{n}(\Theta_{\mathbf{v}}^{n}
  -d_{t}\Theta_{\mathbf{U}}^{n}-\Theta_{\mathbf{q}}^{n})\cdot\mathbf{n}~dx \\
  \leq& \mu_{f}\epsilon_{1}C_{TI}C\|\Lambda_{\mathbf{v}}^{n}\|_{H^{1}(\Omega_{f})}^{2}
  +\mu_{f}(\epsilon_{1}h)^{-1}\|(\Theta_{\mathbf{v}}^{n}-d_{t}\Theta_{\mathbf{U}}^{n}-\Theta_{\mathbf{q}}^{n})\cdot\mathbf{n}\|_{L^{2}(\Gamma)}^{2} \\
  & + C\epsilon_{1}\|\Lambda_{p_{f}}^{n}\|_{L^{2}(\Omega_{f})}^{2}
  +\epsilon_{1}^{-1}\|(\Theta_{\mathbf{v}}^{n}-d_{t}\Theta_{\mathbf{U}}^{n}-\Theta_{\mathbf{q}}^{n})\cdot\mathbf{n}\|_{L^{2}(\Gamma)}^{2},
\end{aligned}
\end{eqnarray}
\begin{eqnarray}\label{6.31}
\begin{aligned}
  & \int_{\Gamma}\tau\cdot \Lambda_{\sigma_{f}}^{n}\mathbf{n}(\Theta_{\mathbf{v}}^{n}-d_{t}\Theta_{\mathbf{U}}^{n})\cdot\tau~dx \\
  \leq& \mu_{f}\epsilon_{1}C_{TI}C\|\Lambda_{\mathbf{v}}^{n}\|_{H^{1}(\Omega_{f})}^{2}
  +\mu_{f}(\epsilon_{1}h)^{-1}\|(\Theta_{\mathbf{v}}^{n}-d_{t}\Theta_{\mathbf{U}}^{n})\cdot\tau\|_{L^{2}(\Gamma)}^{2} \\
  &  + C\epsilon_{1}\|\Lambda_{p_{f}}^{n}\|_{L^{2}(\Omega_{f})}^{2}
  +\epsilon_{1}^{-1}\|(\Theta_{\mathbf{v}}^{n}-d_{t}\Theta_{\mathbf{U}}^{n})\cdot\tau\|_{L^{2}(\Gamma)}^{2},
\end{aligned}
\end{eqnarray}
\begin{eqnarray}\label{6.32}
\begin{aligned}
  &\int_{0}^{t}\int_{\Gamma}\gamma_{f}\mu_{f}h^{-1}(\mathbf{v}_{h}^{n}-\mathbf{v}_{h}^{n-1})\cdot\tau(-d_{t}\Theta_{\mathbf{U}}^{n})\cdot\tau~dxdt \\
  \leq& \frac{1}{2}\gamma_{f}\mu_{f}h^{-1}[\epsilon_{1}^{-1}(1+\Delta t^{-1})
  \|(\mathbf{v}_{h}^{n}-\mathbf{v}_{h}^{n-1})\cdot\tau\|_{L^{2}(0,t;L^{2}(\Gamma))}^{2} \\
   & +\epsilon_{1}(\|(\Theta_{\mathbf{v}}^{n}-d_{t}\Theta_{\mathbf{U}}^{n})\cdot\tau\|_{L^{2}(0,t;L^{2}(\Gamma))}^{2}
   +\Delta t\|\Theta_{\mathbf{v}}^{n}\cdot\tau\|_{L^{2}(0,t;L^{2}(\Gamma))}^{2})] \\
   \leq& \frac{1}{2}\gamma_{f}\mu_{f}h^{-1}[\epsilon_{1}^{-1}(1+\Delta t^{-1})\Delta t^{2}
  \|d_{t}\mathbf{v}_{h}^{n}\cdot\tau\|_{L^{2}(0,t;L^{2}(\Gamma))}^{2} \\
   & +\epsilon_{1}(\|(\Theta_{\mathbf{v}}^{n}-d_{t}\Theta_{\mathbf{U}}^{n})\cdot\tau\|_{L^{2}(0,t;L^{2}(\Gamma))}^{2}
   +\Delta tC\|\Theta_{\mathbf{v}}^{n}\|_{L^{2}(0,t;H^{1}(\Omega_{f}))}^{2})],
\end{aligned}
\end{eqnarray}
\begin{eqnarray}\label{6.33}
\begin{aligned}
  &\int_{0}^{t}\int_{\Gamma}\gamma_{f}\mu_{f}h^{-1}((\mathbf{v}_{h}^{n}-\mathbf{v}_{h}^{n-1})
   -d_{t}(\mathbf{U}_{h}^{n}-\mathbf{U}_{h}^{n-1}))\cdot\mathbf{n}(-\Theta_{\mathbf{q}}^{n})\cdot\mathbf{n}~dxdt \\
  \leq&\frac{1}{2}\gamma_{f}\mu_{f}h^{-1}
  [\epsilon^{-1}(1+\Delta t^{-1})\int_{0}^{t}(\|(\mathbf{v}_{h}^{n}-\mathbf{v}_{h}^{n-1})\cdot\mathbf{n}\|_{L^{2}(\Gamma)}^{2} \\
   & +\|d_{t}(\mathbf{U}_{h}^{n}-\mathbf{U}_{h}^{n-1})\cdot\mathbf{n}\|_{L^{2}(\Gamma)}^{2})~dt+\epsilon\Delta t\int_{0}^{t}\|d_{t}\Theta_{\mathbf{U}}^{n}\cdot\mathbf{n}\|_{L^{2}(\Gamma)}^{2}~dt] \\
   \leq& \frac{1}{2}\gamma_{f}\mu_{f}(\epsilon_{1}h)^{-1}(1+\Delta t^{-1})\Delta t^{2}(\|d_{t}\mathbf{v}_{h}^{n}\cdot\mathbf{n}\|^{2}_{L^{2}(0,t;L^{2}(\Gamma))}
   +\|d_{tt}\mathbf{U}_{h}^{n}\cdot\mathbf{n}\|_{L^{2}(0,t;L^{2}(\Gamma))}^{2})  \\
   &+\frac{1}{2}\gamma_{f}\mu_{f}h^{-1}\epsilon_{1}\Delta t\|d_{t}\Theta_{\mathbf{U}}^{n}\|_{L^{2}(0,t;H^{1}(\Omega_{p}))}^{2},
\end{aligned}
\end{eqnarray}
\begin{eqnarray} \label{6.34}
\begin{aligned}
  &\int_{0}^{t}\int_{\Gamma}\gamma_{f}\mu_{f}h^{-1}((\mathbf{v}_{h}^{n}-\mathbf{v}_{h}^{n-1})-(\mathbf{q}_{h}^{n}-\mathbf{q}_{h}^{n-1}))
   \cdot\mathbf{n}(-d_{t}\Theta_{\mathbf{U}}^{n})\cdot\mathbf{n}~dxdt \\
  \leq&\frac{1}{2}\gamma_{f}\mu_{f}h^{-1}
  [\epsilon_{1}^{-1}(1+\Delta t^{-1})\int_{0}^{t}(\|(\mathbf{v}_{h}^{n}-\mathbf{v}_{h}^{n-1})\cdot\mathbf{n}\|_{L^{2}(\Gamma)}^{2}
   + \|(\mathbf{q}_{h}^{n}-\mathbf{q}_{h}^{n-1})\cdot\mathbf{n}\|_{L^{2}(\Gamma)}^{2})~dt \\
   &+\epsilon_{1}\int_{0}^{t}(\|(\Theta_{\mathbf{v}}^{n}-\Theta_{\mathbf{q}}^{n}-d_{t}\Theta_{\mathbf{U}}^{n})\cdot\mathbf{n}\|_{L^{2}(\Gamma)}^{2}
    +\Delta t\|\Theta_{\mathbf{v}}^{n}\cdot\mathbf{n}\|_{L^{2}(\Gamma)}^{2}+\Delta t\|\Theta_{\mathbf{q}}^{n}\cdot\mathbf{n}\|_{L^{2}(\Gamma)}^{2})~dt] \\
   \leq& \frac{1}{2}\gamma_{f}\mu_{f}h^{-1}
  [\epsilon_{1}^{-1}(1+\Delta t^{-1})\Delta t^{2}(\|d_{t}\mathbf{v}_{h}^{n}\cdot\mathbf{n}\|^{2}_{L^{2}(0,t;L^{2}(\Gamma))}
    +\|d_{t}\mathbf{q}_{h}^{n}\cdot\mathbf{n}\|^{2}_{L^{2}(0,t;L^{2}(\Gamma))})
      \\
   &  +C\Delta t\|\Theta_{\mathbf{v}}^{n}\|_{L^{2}(0,t;H^{1}(\Omega_{f}))}^{2}+\epsilon_{1}(\|(\Theta_{\mathbf{v}}^{n}-\Theta_{\mathbf{q}}^{n}-d_{t}\Theta_{\mathbf{U}}^{n})\cdot\mathbf{n}\|_{L^{2}(0,t;L^{2}(\Gamma))}^{2}\\
   &+
  \frac{\mathrm{C}_{\mathrm{TI}}\Delta t}{h}\|\Theta_{\mathbf{q}}^{n}\|_{L^{2}(0,t;L^{2}(\Omega_{p}))}^{2})],
\end{aligned}
\end{eqnarray}
\begin{eqnarray}\label{6.35}
\begin{aligned}
  &\int_{0}^{t}\int_{\Gamma}2\mu_{f}(\mathbf{D}(\mathbf{v}_{h}^{n})-\mathbf{D}(\mathbf{v}_{h}^{n-1}))\mathbf{n}\cdot [\mathbf{n}(\Theta_{\mathbf{v}}^{n}-d_{t}\Theta_{\mathbf{U}}^{n}-\Theta_{\mathbf{q}}^{n})\cdot\mathbf{n}
   +\tau(\Theta_{\mathbf{v}}^{n}-d_{t}\Theta_{\mathbf{U}}^{n})\cdot\tau] ~dxdt  \\
  \leq& \frac{\mu_{f}}{\epsilon_{1}}\int_{0}^{t}\|\mathbf{D}(\mathbf{v}_{h}^{n}-\mathbf{v}_{h}^{n-1})\|_{L^{2}(\Gamma)}^{2}~dt \\
  & +\epsilon_{1}\mu_{f}(\|(\Theta_{\mathbf{v}}^{n}-\Theta_{\mathbf{q}}^{n}-d_{t}\Theta_{\mathbf{U}}^{n})\cdot\mathbf{n}\|_{L^{2}(0,t;L^{2}(\Gamma))}^{2}
  +\|(\Theta_{\mathbf{v}}^{n}-d_{t}\Theta_{\mathbf{U}}^{n})\cdot\tau\|_{L^{2}(0,t;L^{2}(\Gamma))}^{2}) \\
   \leq& \frac{\mu_{f}}{\epsilon_{1}}\Delta t^{2}C\|d_{t}\mathbf{v}_{h}^{n}\|_{L^{2}(0,t;H^{1}(\Omega_{f}))}^{2} \\
  & +\epsilon_{1}\mu_{f}(\|(\Theta_{\mathbf{v}}^{n}-\Theta_{\mathbf{q}}^{n}-d_{t}\Theta_{\mathbf{U}}^{n})\cdot\mathbf{n}\|_{L^{2}(0,t;L^{2}(\Gamma))}^{2}
  +\|(\Theta_{\mathbf{v}}^{n}-d_{t}\Theta_{\mathbf{U}}^{n})\cdot\tau\|_{L^{2}(0,t;L^{2}(\Gamma))}^{2}),
\end{aligned}
\end{eqnarray}
\begin{eqnarray}\label{6.36}
\begin{aligned}
  & \int_{0}^{t}\int_{\Gamma}\mathbf{n}\cdot(p_{f,h}^{n}-p_{f,h}^{n-1})\mathbf{n}(\Theta_{\mathbf{v}}^{n}
  -d_{t}\Theta_{\mathbf{U}}^{n}-\Theta_{\mathbf{q}}^{n})\cdot\mathbf{n}~dxdt \\
   \leq& \frac{\Delta t^{2}}{2\epsilon_{1}}\|d_{t}p_{f,h}^{n}\|^{2}_{L^{2}(0,t;L^{2}(\Gamma))}
   +\frac{\epsilon_{1}}{2}\|(\Theta_{\mathbf{v}}^{n}-\Theta_{\mathbf{q}}^{n}-d_{t}\Theta_{\mathbf{U}}^{n})\cdot\mathbf{n}\|_{L^{2}(0,t;L^{2}(\Gamma))}^{2},
\end{aligned}
\end{eqnarray}
\begin{eqnarray}\label{6.37}
\begin{aligned}
  & \int_{0}^{t}\int_{\Omega_{p}} k_{1}(\eta_{h}^{n}-\eta_{h}^{n-1})d_{t}\Theta_{\xi}^{n}~d\mathbf{x}dt \\
  \leq& \frac{\Delta tk_{1}^{2}}{2\epsilon_{1}}\|d_{t}\eta_{h}^{n}\|^{2}_{L^{2}(0,t;L^{2}(\Omega_{p}))}
  +\frac{\epsilon_{1}\Delta t}{2} \|d_{t}\Theta_{\xi}^{n}\|^{2}_{L^{2}(0,t;L^{2}(\Omega_{p}))},
\end{aligned}
\end{eqnarray}
\begin{eqnarray}\label{6.38}
\begin{aligned}
  & \int_{0}^{t}\int_{\Omega_{p}}\mathbf{R}_{h}^{n}\cdot\Theta_{p_{p}}^{n}~d\mathbf{x}dt \\
  \leq& \int_{0}^{t}\|\mathbf{R}_{h}^{n}\|_{H^{1}(\Omega_{p})'}\|\nabla\Theta_{p_{p}}^{n}\|_{L^{2}(\Omega_{p})}~dt \\
  \leq& \int_{0}^{t}(\frac{1}{\epsilon_{1}}\|\mathbf{R}_{h}^{n}\|^{2}_{H^{1}(\Omega_{p})'}
  +\frac{\epsilon_{1}}{4}\|\Theta_{\mathbf{q}}^{n}\|^{2}_{L^{2}(\Omega_{p})})~dt \\
  \leq& \frac{\Delta t^{2}}{3\epsilon_{1}}\|d_{tt}\eta_{h}^{n}\|^{2}_{L^{2}(0,t;H^{1}(\Omega_{p})')}
   +\frac{\epsilon_{1}}{4}\|\Theta_{\mathbf{q}}^{n}\|^{2}_{L^{2}(0,t;L^{2}(\Omega_{p}))},
\end{aligned}
\end{eqnarray}
For any $\epsilon_{1}>0$, the stabilization terms $s_{f,p}(\cdot,\cdot),~s_{f,v}(\cdot,\cdot),~s_{f,q}(\cdot,\cdot)$ satisfy the following upper bounds, respectively,
\begin{eqnarray}\label{6.39}
\begin{aligned}
  & \int_{0}^{t}s_{f,p}(d_{t}p_{f,h}^{n},\Theta_{p_{f}}^{n})~dt \\
  \leq&\frac{\gamma_{stab}}{2}\frac{h\Delta t}{\gamma_{f}\mu_{f}}
  [\epsilon_{1}^{-1}\|d_{t}p_{f,h}^{n}\|^{2}_{L^{2}(0,t;L^{2}(\Gamma))}
  +\epsilon_{1}\frac{\mathrm{C}_{\mathrm{TI}}\Delta t}{h}\|\Theta_{p_{f}}^{n}\|^{2}_{L^{2}(0,t;L^{2}(\Omega_{f}))}],\\
\end{aligned}
\end{eqnarray}
\begin{eqnarray}\label{6.40}
\begin{aligned}
  & \int_{0}^{t}s_{f,v}(d_{t}\mathbf{v}_{h}^{n}\cdot\mathbf{n},\Theta_{\mathbf{v}}^{n}\cdot\mathbf{n})~dt  \\ \leq&\frac{\gamma'_{stab}}{2}\gamma_{f}\mu_{f}\frac{\Delta t}{h}[\epsilon_{1}^{-1}\|d_{t}\mathbf{v}_{h}^{n}\cdot\mathbf{n}\|^{2}_{L^{2}(0,t;L^{2}(\Gamma))}+\epsilon_{1}C\Delta t\|\Theta_{\mathbf{v}}^{n}\|_{L^{2}(0,t;H^{1}(\Omega_{f}))}^{2}],
\end{aligned}
\end{eqnarray}
\begin{eqnarray}\label{6.41}
\begin{aligned}
  &  \int_{0}^{t}s_{f,q}(d_{t}\mathbf{q}_{h}^{n}\cdot\mathbf{n},\Theta_{\mathbf{q}}^{n}\cdot\mathbf{n})~dt  \\
  \leq&\frac{\gamma'_{stab}}{2}\gamma_{f}\mu_{f}\frac{\Delta t}{h}[\epsilon_{1}^{-1}\|d_{t}\mathbf{q}_{h}^{n}\cdot\mathbf{n}\|^{2}_{L^{2}(0,t;L^{2}(\Gamma))}+\epsilon_{1}\frac{C_{TI}\Delta t}{h} \|\Theta_{\mathbf{q}}^{n}\|_{L^{2}(0,t;L^{2}(\Omega_{p}))}^{2}],
\end{aligned}
\end{eqnarray}
It is easy to check that
\begin{align*}
   & \|(\Theta_{p_{f}}^{n},\Theta_{p_{p}}^{n}),\Theta_{\xi}^{n}\|_{Q\times M} \nonumber\\
   \leq& C\sup\limits_{((\varphi_{f,h},\mathbf{r}_{h}),\varphi_{p,h})\in\mathbf{V}_{h}\times\mathbf{X}_{h}^{p}}
   (\frac{(\Theta_{p_{f}}^{n},\nabla\cdot\varphi_{f,h})_{\Omega_{f}}+(\Theta_{p_{p}}^{n},\nabla\cdot\mathbf{r}_{h})_{\Omega_{p}}
   +(\Theta_{\xi}^{n},\nabla\cdot\varphi_{p,h})_{\Omega_{p}}}
   {\|((\varphi_{f,h},\mathbf{r}_{h}),\varphi_{p,h})\|_{\mathbf{V}\times\mathbf{X}^{p}} }  \nonumber\\
   &+\frac{\langle\mathbf{n}\Theta_{\sigma_{f}}^{n}\mathbf{n},(\varphi_{f,h}-\varphi_{p,h}-\mathbf{r}_{h})\mathbf{n}\rangle_{\Gamma}}
   {\|((\varphi_{f,h},\mathbf{r}_{h}),\varphi_{p,h})\|_{\mathbf{V}\times\mathbf{X}^{p}} }) \nonumber\\
    \leq& C\sup\limits_{((\varphi_{f,h},\mathbf{r}_{h}),\varphi_{p,h})\in\mathbf{V}_{h}\times\mathbf{X}_{h}^{p}}
    (\frac{-2\mu_{f}(\mathbf{D}(E_{\mathbf{v}}^{n}),\mathbf{D}(\varphi_{f,h}))_{\Omega_{f}}-2\mu_{p}(\mathbf{D}(E_{\mathbf{U}}^{n}),\mathbf{D}(\varphi_{p,h}))_{\Omega_{p}}}
    {\|((\varphi_{f,h},\mathbf{r}_{h}),\varphi_{p,h})\|_{\mathbf{V}\times\mathbf{X}^{p}}}\nonumber\\
   &+\frac{-k^{-1}(E_{\mathbf{q}}^{n},\mathbf{r}_{h})_{\Omega_{p}}-\langle\tau E_{\sigma_{f}}^{n}\mathbf{n},(\varphi_{f,h}-\varphi_{p,h})\tau\rangle_{\Gamma}}
    {\|((\varphi_{f,h},\mathbf{r}_{h}),\varphi_{p,h})\|_{\mathbf{V}\times\mathbf{X}^{p}}}
    +\frac{-\langle\mathbf{n}\Lambda_{\sigma_{f}}^{n}\mathbf{n},(\varphi_{f,h}-\varphi_{p,h}-\mathbf{r}_{h})\mathbf{n}\rangle_{\Gamma}}
   {\|((\varphi_{f,h},\mathbf{r}_{h}),\varphi_{p,h})\|_{\mathbf{V}\times\mathbf{X}^{p}} } \nonumber\\
   &+\frac{-(\Lambda_{p_{f}}^{n},\nabla\cdot\varphi_{f,h})_{\Omega_{f}}-(\Lambda_{p_{p}}^{n},\nabla\cdot\mathbf{r}_{h})_{\Omega_{p}}
   -(\Lambda_{\xi}^{n},\nabla\cdot\varphi_{p,h})_{\Omega_{p}}}
   {\|((\varphi_{f,h},\mathbf{r}_{h}),\varphi_{p,h})\|_{\mathbf{V}\times\mathbf{X}^{p}} })
\end{align*}
 and
\begin{eqnarray*}
\begin{aligned}
   & \epsilon_{2}(\|\Theta_{p_{f}}^{n}\|^{2}_{L^{2}(0,t;L^{2}(\Omega_{f}))}+\|\Theta_{p_{p}}^{n}\|^{2}_{L^{2}(0,t;L^{2}(\Omega_{p}))}
   +\|\Theta_{\xi}^{n}\|^{2}_{L^{2}(0,t;L^{2}(\Omega_{p}))}) \\
   \leq& C\epsilon_{2}(\|\Theta_{\mathbf{v}}^{n}\|^{2}_{L^{2}(0,t;H^{1}(\Omega_{p}))}+\|\Theta_{\mathbf{U}}^{n}\|^{2}_{L^{2}(0,t;H^{1}(\Omega_{p}))}
   +\|\Theta_{\mathbf{q}}^{n}\|^{2}_{L^{2}(0,t;L^{2}(\Omega_{p}))} \\
\end{aligned}
\end{eqnarray*}
\begin{eqnarray}\label{6.42}
\begin{aligned}
   & +\|\Lambda_{\mathbf{v}}^{n}\|^{2}_{L^{2}(0,t;H^{1}(\Omega_{p}))}+\|d_{t}\Lambda_{\mathbf{U}}^{n}\|^{2}_{L^{2}(0,t;H^{1}(\Omega_{p}))}
   +\|\Lambda_{\mathbf{q}}^{n}\|^{2}_{L^{2}(0,t;L^{2}(\Omega_{p}))}  \\
   & +\|\Lambda_{p_{f}}^{n}\|^{2}_{L^{2}(0,t;L^{2}(\Omega_{f}))}+\|\Lambda_{p_{p}}^{n}\|^{2}_{L^{2}(0,t;L^{2}(\Omega_{p}))}
   +\|\Lambda_{\xi}^{n}\|^{2}_{L^{2}(0,t;L^{2}(\Omega_{p}))} \\
   & +\|(\Theta_{\mathbf{v}}^{n}-d_{t}\Theta_{\mathbf{U}}^{n})\cdot\tau\|_{L^{2}(0,t;L^{2}(\Gamma))}
   +\|(\Theta_{\mathbf{v}}^{n}-d_{t}\Theta_{\mathbf{U}}^{n}-\Theta_{\mathbf{q}}^{n})\cdot\mathbf{n}\|_{L^{2}(0,t;L^{2}(\Gamma))}).
\end{aligned}
\end{eqnarray}

Substituting (\ref{6.25})-(\ref{6.42}) into (\ref{6.24}) and taking~$\epsilon_{1},~\epsilon_{2}$~small enough, we get
\begin{eqnarray}\label{}
  && \frac{1}{2}[c\|\Theta_{\mathbf{U}}^{t}\|_{H^{1}(\Omega_{p})}^{2}+k_{3}\|\Phi_{\xi}^{t}\|_{L^{2}(\Omega_{p})}^{2}
   +k_{2}\|\Phi_{\eta}^{t}\|_{L^{2}(\Omega_{p})}^{2}]+c^{p}\|\Theta_{\mathbf{q}}^{n}\|^{2}_{L^{2}(0,t;L^{2}(\Omega_{p}))}
   \nonumber \\
  && +c\|\Theta_{\mathbf{v}}^{n}\|^{2}_{L^{2}(0,t;H^{1}(\Omega_{f}))}+\|\Theta_{p_{f}}^{n}\|^{2}_{L^{2}(0,t;L^{2}(\Omega_{f}))}
  +\|\Theta_{p_{p}}^{n}\|^{2}_{L^{2}(0,t;L^{2}(\Omega_{p}))}+\|\Theta_{\xi}^{n}\|^{2}_{L^{2}(0,t;L^{2}(\Omega_{p}))} \nonumber \\
  &&+\gamma_{f}\mu_{f}h^{-1}\|(\Theta_{\mathbf{v}}^{n}-d_{t}\Theta_{\mathbf{U}}^{n}-\Theta_{\mathbf{q}}^{n})\cdot\mathbf{n}\|^{2}_{L^{2}(0,t;L^{2}(\Gamma))}
   +\gamma_{f}\mu_{f}h^{-1}\|(\Theta_{\mathbf{v}}^{n}-d_{t}\Theta_{\mathbf{U}}^{n})\cdot\tau\|^{2}_{L^{2}(0,t;L^{2}(\Gamma))} \nonumber\\
  &\leq & C ( \|\Lambda_{\mathbf{v}}^{n}\|^{2}_{L^{2}(0,t;H^{1}(\Omega_{f}))}+\|\Lambda_{p_{f}}^{n}\|^{2}_{L^{2}(0,t;L^{2}(\Omega_{f}))}
   +\|\Lambda_{\mathbf{q}}^{n}\|^{2}_{L^{2}(0,t;L^{2}(\Omega_{p}))}\nonumber\\
   && +\|\Lambda_{p_{p}}^{n}\|^{2}_{L^{2}(0,t;L^{2}(\Omega_{p}))}+ \|\Lambda_{\mathbf{U}}^{t}\|^{2}_{H^{1}(\Omega_{p})}+\|\Lambda_{\xi}^{t}\|^{2}_{L^{2}(\Omega_{p})}  \\
   &&+\|d_{t}\Lambda_{\mathbf{U}}^{n}\|^{2}_{L^{2}(0,t;H^{1}(\Omega_{p}))}+\|\Lambda_{\xi}^{n}\|^{2}_{L^{2}(0,t;L^{2}(\Omega_{p}))}
   +\|d_{t}\Lambda_{\xi}^{n}\|^{2}_{L^{2}(0,t;L^{2}(\Omega_{p}))} ) \nonumber\\
   &&+ \frac{(1+\Delta t)\Delta t\gamma_{f}\mu_{f}}{2h\epsilon_{1}}(\|d_{t}\mathbf{v}_{h}^{n}\cdot\tau\|_{L^{2}(0,t;L^{2}(\Gamma))}^{2}
    +(2+\frac{\gamma'_{stab}}{1+\Delta t})\|d_{t}\mathbf{v}_{h}^{n}\cdot\mathbf{n}\|_{L^{2}(0,t;L^{2}(\Gamma))}^{2} \nonumber\\
   && +\|d_{tt}\mathbf{U}_{h}^{n}\cdot\mathbf{n}\|_{L^{2}(0,t;L^{2}(\Gamma))}^{2}+(1+\frac{\gamma'_{stab}}{1+\Delta t})\|d_{t}\mathbf{q}_{h}^{n}\cdot\mathbf{n}\|_{L^{2}(0,t;L^{2}(\Gamma))}^{2} )\nonumber\\
   && +\frac{\mu_{f}}{\epsilon_{1}}\Delta t^{2}C\|d_{t}\mathbf{v}_{h}^{n}\|_{L^{2}(0,t;H^{1}(\Omega_{f}))}^{2}+(\frac{\Delta t^{2}}{2\epsilon_{1}}+\frac{\gamma_{stab}h\Delta t}{2\gamma_{f}\mu_{f}\epsilon_{1}})\|d_{t}p_{f,h}^{n}\|^{2}_{L^{2}(0,t;L^{2}(\Gamma))}\nonumber\\
   &&+\frac{\Delta tk_{1}^{2}}{2\epsilon_{1}}\|d_{t}\eta_{h}^{n}\|^{2}_{L^{2}(0,t;L^{2}(\Omega_{p}))}
   +\frac{\Delta t^{2}}{3\epsilon_{1}}\|d_{tt}\eta_{h}^{n}\|^{2}_{L^{2}(0,t;H^{1}(\Omega_{p})')}.\nonumber
\end{eqnarray}
Using (\ref{5.14}), (\ref{5.18}), (\ref{5.19}), (\ref{5.22}) and (\ref{5.26}), we see that (\ref{eq210723-1}) holds. The proof is complete.
\end{proof}

\begin{thrm}\label{thm5.2}
The solution of the problem (\ref{3.5})-(\ref{3.7}) satisfies the following error estimates
\begin{eqnarray}
  && \frac{\sqrt{2}}{2}[\sqrt{c}\|\mathbf{U}(t_{n})-\mathbf{U}^{n}_{h}\|_{L^{\infty}(0,T;H^{1}(\Omega_{p}))}
  +\sqrt{k_{3}}\|\xi(t_{n})-\xi^{n}_{h}\|_{L^{\infty}(0,T;L^{2}(\Omega_{p}))}\nonumber\\
  && +\sqrt{k_{2}}\|\eta(t_{n})-\eta^{n}_{h}\|_{L^{\infty}(0,T;L^{2}(\Omega_{p}))}]\nonumber\\
  && +\sqrt{c^{p}}\|\mathbf{q}(t_{n})-\mathbf{q}^{n}_{h}\|_{L^{2}(0,T;L^{2}(\Omega_{p}))}
   +\sqrt{c}\|\mathbf{v}(t_{n})-\mathbf{v}^{n}_{h}\|_{L^{2}(0,T;H^{1}(\Omega_{f}))} \nonumber\\
  && +\|p_{f}(t_{n})-p_{f,h}^{n}\|_{L^{2}(0,T;L^{2}(\Omega_{f}))}+\|p_{p}(t_{n})-p_{p,h}^{n}\|_{L^{2}(0,T;L^{2}(\Omega_{p}))}
   +\|\xi(t_{n})-\xi^{n}_{h}\|_{L^{2}(0,T;L^{2}(\Omega_{p}))}  \nonumber\\
  &&+\sqrt{\gamma_{f}\mu_{f}h^{-1}}\|[(\mathbf{v}(t_{n})-d_{t}\mathbf{U}(t_{n})-\mathbf{q}(t_{n}))
  -(\mathbf{v}^{n}_{h}-d_{t}\mathbf{U}^{n}_{h}-\mathbf{q}^{n}_{h})]\cdot\mathbf{n}\|_{L^{2}(0,T;L^{2}(\Gamma))}  \nonumber\\
  && +\sqrt{\gamma_{f}\mu_{f}h^{-1}}\|[(\mathbf{v}(t_{n})-d_{t}\mathbf{U}(t_{n}))
  -(\mathbf{v}^{n}_{h}-d_{t}\mathbf{U}^{n}_{h})]\cdot\tau\|_{L^{2}(0,T;L^{2}(\Gamma))} \nonumber\\
   &\leq & C\sqrt{\mathrm{exp}(T)} (h^{r_{3}} \|\mathbf{v}\|_{L^{2}(0,T;H^{r_{3}+1}(\Omega_{f}))}+h^{r_{1}}\|p_{f}\|_{L^{2}(0,T;H^{r_{1}}(\Omega_{f}))}
   +h^{r_{5}}\|\mathbf{q}\|_{L^{2}(0,T;H^{r_{5}}(\Omega_{p}))} \nonumber\\
   && + h^{r_{4}}\|\mathbf{U}\|_{L^{\infty}(0,T;H^{r_{4}+1}(\Omega_{p}))}+h^{r_{4}}\|d_{t}\mathbf{U}\|_{L^{2}(0,T;H^{r_{4}+1}(\Omega_{p}))}
   +h^{r_{6}}\|\eta\|_{L^{\infty}(0,T;H^{r_{6}+1}(\Omega_{p}))} \label{eq210516-1}\\
   &&+h^{r_{6}}(\|\xi\|_{L^{2}(0,T;H^{r_{6}+1}(\Omega_{p}))}+\|\xi\|_{L^{\infty}(0,T;H^{r_{6}+1}(\Omega_{p}))}
   +\|d_{t}\xi\|_{L^{2}(0,T;H^{r_{6}+1}(\Omega_{p}))})  \nonumber\\
   && +h^{r_{2}}\|p_{p}\|_{L^{2}(0,T;H^{r_{2}}(\Omega_{p}))})\nonumber\\
   && +\sqrt{\frac{(1+\Delta t)\Delta t\gamma_{f}\mu_{f}}{2h\epsilon_{1}}}(\|d_{t}\mathbf{v}_{h}^{n}\cdot\tau\|_{L^{2}(0,T;L^{2}(\Gamma))}
    +\sqrt{2+\frac{\gamma'_{stab}}{1+\Delta t}}\|d_{t}\mathbf{v}_{h}^{n}\cdot\mathbf{n}\|_{L^{2}(0,T;L^{2}(\Gamma))}\nonumber\\
   && +\sqrt{(1+\frac{\gamma'_{stab}}{1+\Delta t})}\|d_{t}\mathbf{q}_{h}^{n}\cdot\mathbf{n}\|_{L^{2}(0,T;L^{2}(\Gamma))}
   +\|d_{tt}\mathbf{U}_{h}^{n}\cdot\mathbf{n}\|_{L^{2}(0,T;L^{2}(\Gamma))} )\nonumber\\
   && +\sqrt{\frac{\Delta t^{2}}{2\epsilon_{1}}+\frac{\gamma_{stab}h\Delta t}{2\gamma_{f}\mu_{f}\epsilon_{1}}}\|d_{t}p_{f,h}^{n}\|_{L^{2}(0,T;L^{2}(\Gamma))}
   +\sqrt{\frac{\Delta tk_{1}^{2}}{2\epsilon_{1}}}\|d_{t}\eta_{h}^{n}\|_{L^{2}(0,T;L^{2}(\Omega_{p}))}\nonumber\\
   &&+\sqrt{\frac{\Delta t^{2}}{3\epsilon_{1}}}\|d_{tt}\eta_{h}^{n}\|_{L^{2}(0,T;H^{1}(\Omega_{p})')}+\sqrt{\frac{\mu_{f}}{\epsilon_{1}}\Delta t^{2}C}\|d_{t}\mathbf{v}_{h}^{n}\|_{L^{2}(0,T;H^{1}(\Omega_{f}))},\nonumber
\end{eqnarray}
where
\begin{gather*}
  0\leq r_{1}\leq s_{f}+1,~~~0\leq r_{2}\leq s_{p}+1,~~~0\leq r_{3}\leq k_{f}, \\
  0\leq r_{4}\leq k_{p},~~~1\leq r_{5}\leq k_{s}+1,~~~1\leq r_{6}\leq s_{p}+1.
\end{gather*}
\end{thrm}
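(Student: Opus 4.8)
The plan is to obtain Theorem \ref{thm5.2} as an almost immediate consequence of Theorem \ref{thm5.1} together with the approximation properties of the projection operators. Recall the error decompositions $E_\bullet^n=\Lambda_\bullet^n+\Theta_\bullet^n$ (and, for the pseudo-pressures, $E_\xi^n=\Psi_\xi^n+\Phi_\xi^n$, $E_\eta^n=\Psi_\eta^n+\Phi_\eta^n$) introduced just before Theorem \ref{thm5.1}. For every norm appearing on the left-hand side of (\ref{eq210516-1}) I would insert the relevant projection, apply the triangle inequality to split the total error into the projection part ($\Lambda$ or $\Psi$) and the discrete part ($\Theta$ or $\Phi$), bound all the $\Theta$/$\Phi$ contributions directly by the right-hand side of (\ref{eq210723-1}) via Theorem \ref{thm5.1}, and bound the $\Lambda$/$\Psi$ contributions by the interpolation estimates (\ref{5.14}), (\ref{5.18}), (\ref{5.19}), (\ref{5.22}), (\ref{5.26}) and the $L^2$-projection estimate.

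Concretely: $\|\mathbf{U}(t_n)-\mathbf{U}_h^n\|_{L^\infty(0,T;H^1(\Omega_p))}\le\|\Lambda_{\mathbf{U}}^n\|_{L^\infty(0,T;H^1(\Omega_p))}+\|\Theta_{\mathbf{U}}^n\|_{L^\infty(0,T;H^1(\Omega_p))}$, the first term being $\le Ch^{r_4}\|\mathbf{U}\|_{L^\infty(0,T;H^{r_4+1}(\Omega_p))}$ by (\ref{5.19}); similarly the velocity error in $L^2(0,T;H^1(\Omega_f))$ uses (\ref{5.18}), the fluid and Darcy pressure errors use (\ref{5.14}) and the $L^2$-projection estimate on $\Omega_p$, $\|\mathbf{q}(t_n)-\mathbf{q}_h^n\|_{L^2(0,T;L^2(\Omega_p))}$ uses (\ref{5.22}), and the $\xi,\eta$-errors use (\ref{5.26}) for the $L^2$-in-time norms and the $L^2$-projection estimate for the $L^\infty$-in-time norms. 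Because the resulting $\Lambda$/$\Psi$ bounds are of exactly the same orders $h^{r_i}$ already present on the right-hand side of (\ref{eq210723-1}), they are absorbed into the constant $C$, and the right-hand side of (\ref{eq210516-1}) is reproduced with the same $\sqrt{\exp(T)}$ factor and the same stabilization/consistency terms in $d_t\mathbf{v}_h^n$, $d_t\mathbf{q}_h^n$, $d_{tt}\mathbf{U}_h^n$, $d_tp_{f,h}^n$, $d_t\eta_h^n$, $d_{tt}\eta_h^n$.

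The only step requiring a little extra care is the pair of interface norms, e.g. $\|[(\mathbf{v}(t_n)-d_t\mathbf{U}(t_n)-\mathbf{q}(t_n))-(\mathbf{v}_h^n-d_t\mathbf{U}_h^n-\mathbf{q}_h^n)]\cdot\mathbf{n}\|_{L^2(0,T;L^2(\Gamma))}$, weighted by $\sqrt{\gamma_f\mu_f h^{-1}}$. After splitting off $(\Theta_{\mathbf{v}}^n-d_t\Theta_{\mathbf{U}}^n-\Theta_{\mathbf{q}}^n)\cdot\mathbf{n}$, which is controlled by Theorem \ref{thm5.1}, there remains $(\Lambda_{\mathbf{v}}^n-d_t\Lambda_{\mathbf{U}}^n-\Lambda_{\mathbf{q}}^n)\cdot\mathbf{n}$ on $\Gamma$; here I would use the trace inequality to bound $\|\Lambda_{\mathbf{v}}^n\cdot\mathbf{n}\|_{L^2(\Gamma)}\le C\|\Lambda_{\mathbf{v}}^n\|_{H^1(\Omega_f)}$ and $\|d_t\Lambda_{\mathbf{U}}^n\cdot\mathbf{n}\|_{L^2(\Gamma)}\le C\|d_t\Lambda_{\mathbf{U}}^n\|_{H^1(\Omega_p)}$ (the projections commuting with $d_t$ since they are time-independent), and a trace estimate exploiting the regularity $\mathbf{q}\in H^{r_5}(\Omega_p)$, $r_5\ge1$, for $\Lambda_{\mathbf{q}}^n\cdot\mathbf{n}$; the weight $h^{-1/2}$ is then compensated by the extra power of $h$ gained in (\ref{5.18})/(\ref{5.19})/(\ref{5.22}), so the contribution is still of order $h^{r_i}$. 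The tangential interface term $\|[(\mathbf{v}(t_n)-d_t\mathbf{U}(t_n))-(\mathbf{v}_h^n-d_t\mathbf{U}_h^n)]\cdot\tau\|_{L^2(0,T;L^2(\Gamma))}$ is treated in the same way. I expect this verification that the mesh-dependent interface weights do not degrade the convergence order to be the main (yet still routine) technical point; the rest is a direct consequence of Theorem \ref{thm5.1} and standard approximation theory.
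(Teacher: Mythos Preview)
Your proposal is correct and follows essentially the same approach as the paper: the paper's proof consists of nothing more than applying the triangle inequality to the error decompositions $E_\bullet^n=\Lambda_\bullet^n+\Theta_\bullet^n$, invoking Theorem~\ref{thm5.1} for the $\Theta$-parts, and the approximation properties (\ref{5.14}), (\ref{5.18}), (\ref{5.19}), (\ref{5.22}), (\ref{5.26}) for the $\Lambda$-parts. Your additional discussion of the weighted interface terms via trace inequalities is in fact more detailed than what the paper provides, since the paper simply asserts the result without treating those terms separately.
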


\begin{proof}

The above estimates following immediately from an application of the triangle inequality on

\begin{eqnarray*}
  \mathbf{v}(t_{n})-\mathbf{v}_{h}^{n}=\Lambda_{\mathbf{v}}^{n}+\Theta_{\mathbf{v}}^{n}, && p_{f}(t_{n})-p_{f,h}^{n}=\Lambda_{p_{f}}^{n}+\Theta_{p_{f}}^{n}, \\
  \mathbf{U}(t_{n})-\mathbf{U}_{h}^{n}=\Lambda_{\mathbf{U}}^{n}+\Theta_{\mathbf{U}}^{n}, && \xi(t_{n})-\xi_{h}^{n}=\Lambda_{\xi}^{n}+\Theta_{\xi}^{n},\\
  \eta(t_{n})-\eta_{h}^{n}=\Lambda_{\eta}^{n}+\Theta_{\eta}^{n}, && p_{p}(t_{n})-p_{p,h}^{n}=\Lambda_{p_{p}}^{n}+\Theta_{p_{p}}^{n},\\
  \mathbf{q}(t_{n})-\mathbf{q}_{h}^{n}=\Lambda_{\mathbf{q}}^{n}+\Theta_{\mathbf{q}}^{n}. &&
\end{eqnarray*}

Using (\ref{5.14}), (\ref{5.18}), (\ref{5.19}), (\ref{5.22}), (\ref{5.26}) and Theorem \ref{thm5.1}, we see that (\ref{eq210516-1}) holds. The proof is complete.
\end{proof}


\section{Numerical tests}\label{sec4}

{\bf Test  1.}
Taking the domain by $\Omega=[0,1]\times[-1,1]$, and  we associate the upper half with the Stokes flow, while the
lower half represents the poroelastic structure. The appropriate interface conditions are enforced along the interface $y=0$. 

The source functions~$\mathbf{f},~g,~\mathbf{h}$~and~$s$~are given by
\begin{equation*}
  \mathbf{f}=\begin{pmatrix}\pi e^{t}\cos(\frac{\pi y}{2})\cos(\pi x)+\pi\mu_{f}\cos(y)\cos(\pi t)\\-\frac{\pi}{2}e^{t}\sin(\pi x)\sin(\frac{\pi y}{2}) \end{pmatrix},
\end{equation*}
\begin{equation*}
  \mathbf{h}=\begin{pmatrix}\alpha\pi e^{t}\cos(\frac{\pi y}{2})\cos(\pi x)+\mu_{p}\cos(y)\sin(\pi t)\\-\frac{\pi}{2}\alpha e^{t}\sin(\pi x)\sin(\frac{\pi y}{2}) \end{pmatrix},
\end{equation*}
\begin{eqnarray*}
  g=-2\pi\cos(\pi t), \ \
  s=(s_{0}-\frac{3}{4}\pi^{2})e^{t}\sin(\pi x)\cos(\frac{\pi y}{2})-2\alpha\pi\cos(\pi t).
\end{eqnarray*}
The boundary conditions are
\begin{eqnarray*}
  \mathbf{U}=0,~~\mathbf{q}=0,~~\xi=0,~~\eta=0,~~&&(\mathbf{x},t)\in\Gamma_{p}^{in}\cup\Gamma_{p}^{out}\cup\Gamma_{p}^{ext}\times(0,T], \\
  \mathbf{v}=0,~~&&(\mathbf{x},t)\in\Gamma_{f}^{in}\cup\Gamma_{f}^{out}\times(0,T],\\
  p_{f}=0,~~&&(\mathbf{x},t)\in\Gamma_{f}^{out}\times(0,T].
\end{eqnarray*}
The initial values are
\begin{equation*}
\mathbf{U}^{0}=0,~~\mathbf{v}^{0}=0,~~\xi^{0}=0,~~\eta^{0}=0.
\end{equation*}
 And we take the total simulation time for this test case is $T=10^{-3}$ and the time step is $\Delta t=10^{-4}$, the mesh space discretization step with $h=0.014$ as a exact solution. We use $\mathbb{P}_2-\mathbb{P}_1$ element approximations for velocity and pressure in the fluid, combined with $\mathbb{P}_2-\mathbb{P}_1$ element approximation of the relative velocity and pressure of the fluid within the porous structure and $\mathbb{P}_2$ element approximation of the structure displacement.
 
 \begin{table}[H]
 	\caption{Values of parameters}\label{table2}
 	\centering
 	\begin{tabular}{ccc}
 		\hline
 		Parameters& Notation & Values \\
 		\hline
 		Viscosity & $\mu_{f}$ & 0.01 \\
 		Lem\'{e} constant & $\mu_{p}$ & $1\times10^{8}$ \\
 		Lem\'{e} constant & $\lambda_{p}$ & $4.28\times10^{6}$\\
 		Mass storage coefficient & $s_{0}$ & $5\times10^{-6}$\\
 		Biot-Willis constant & $\alpha$ & 1\\
 		Hydraulic conductivity & k & 1\\
 		\hline
 	\end{tabular}
 \end{table}
 
\begin{table}[H]
  \caption{Error indicators and convergence order}\label{table1}
  \centering
  \begin{tabular}{ccccccccc}
  \hline
  $h$ & $\varepsilon_{f}$ & Rate & $\varepsilon_{p}$ & Rate & $\varepsilon_{fp}$ & Rate & $\varepsilon_{pp}$ & Rate\\
  \hline
  $h=0.27$ & 1.8E-02 &   & 4.6E-07 &      & 6.7E-04 &      & 7.4E-04 &     \\
  $h/2$ & 1.1E-02 & 0.71 & 2.5E-07 & 0.88 & 2.7E-04 & 1.31 & 3.1E-04 & 1.26\\
  $h/4$ & 6.8E-03 & 0.69 & 1.3E-07 & 0.94 & 1.1E-04 & 1.30 & 1.3E-04 & 1.25\\
  $h/8$ & 4.0E-03 & 0.77 & 4.1E-08 & 1.66 & 3.9E-05 & 1.50 & 4.5E-05 & 1.53\\
  \hline
  \end{tabular}
\end{table}
Table \ref{table1} gives the error and the corresponding order of convergence between numerical solution and exact solutions in different mesh~$h$, which shows that the smaller error of the velocity $\mathbf{v}$ and pressure~$p_{f}$, the displacement~$\mathbf{U}$, pressure~$p_{p}$ in different regions, it improved the calculation precision, where 
\begin{align*}
	\varepsilon_{f}:=\|\mathbf{v}-\mathbf{v}_{h}\|_{L^{2}(H^{1})}, & ~~~~~ \varepsilon_{fp}:=\|p_{f}-p_{f,h}\|_{L^{2}(L^{2})}, \\
	\varepsilon_{p}:=\|\mathbf{U}-\mathbf{U}_{h}\|_{L^{\infty}(H^{1})}, & ~~~~~ \varepsilon_{pp}:=\|p_{p}-p_{p,h}\|_{L^{2}(L^{2})}.
\end{align*}
The convergence order of velocity $\mathbf{v}$ is greater than $0.5$, and the convergence order of displacement $\mathbf{U}$ and pressure ~$p_{f}$,~$p_{p}$ can reach the first convergence gradually. 

\begin{figure}[H]
\begin{minipage}[t]{0.5\textwidth}
	\centering
	\includegraphics[width=8cm]{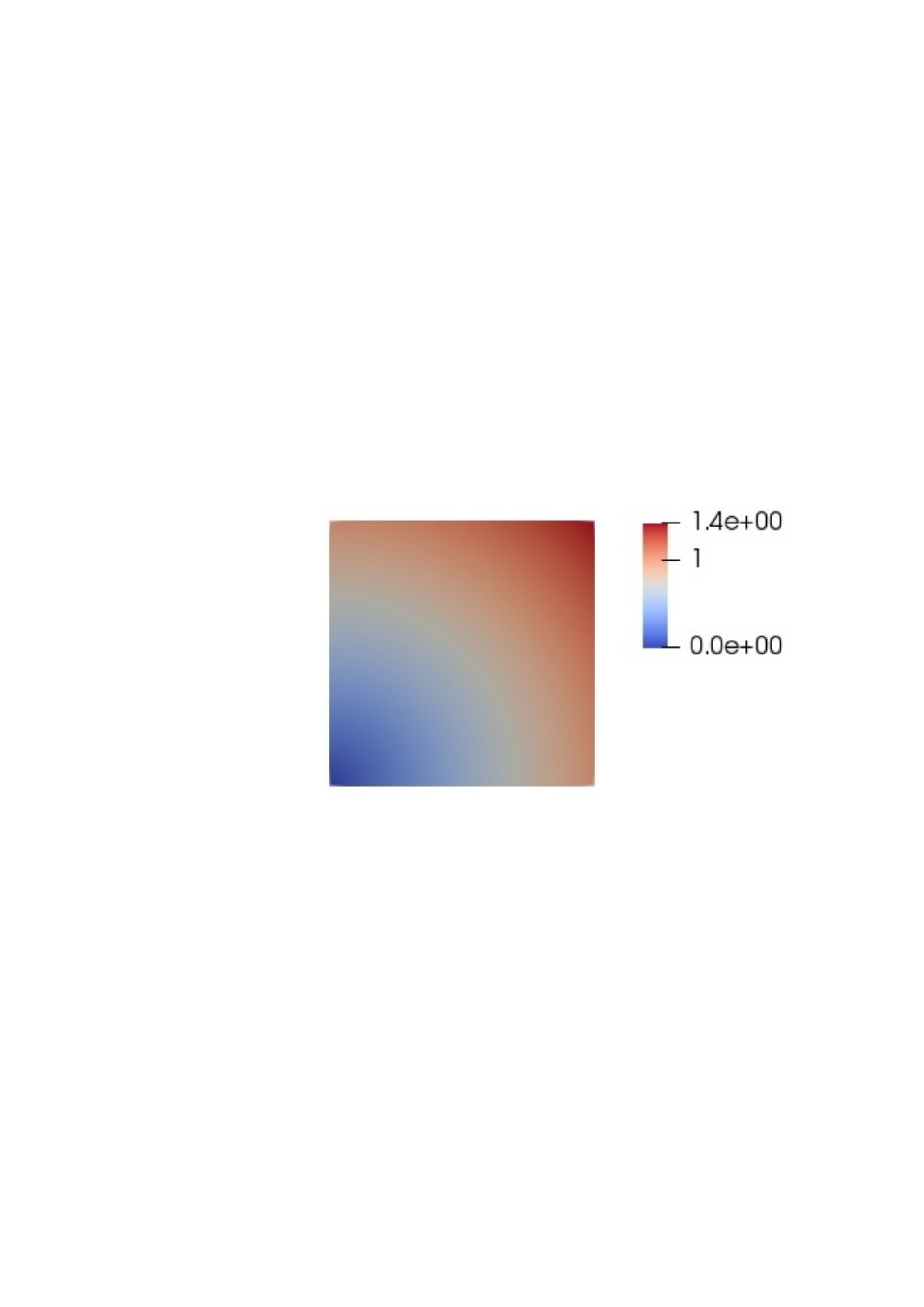}
	\caption{Numerical solution of
		$\mathbf{v}$ in\ $t=0.001$}\label{fig v}
\end{minipage}
\begin{minipage}[t]{0.5\textwidth}
\centering
\includegraphics[width=8cm]{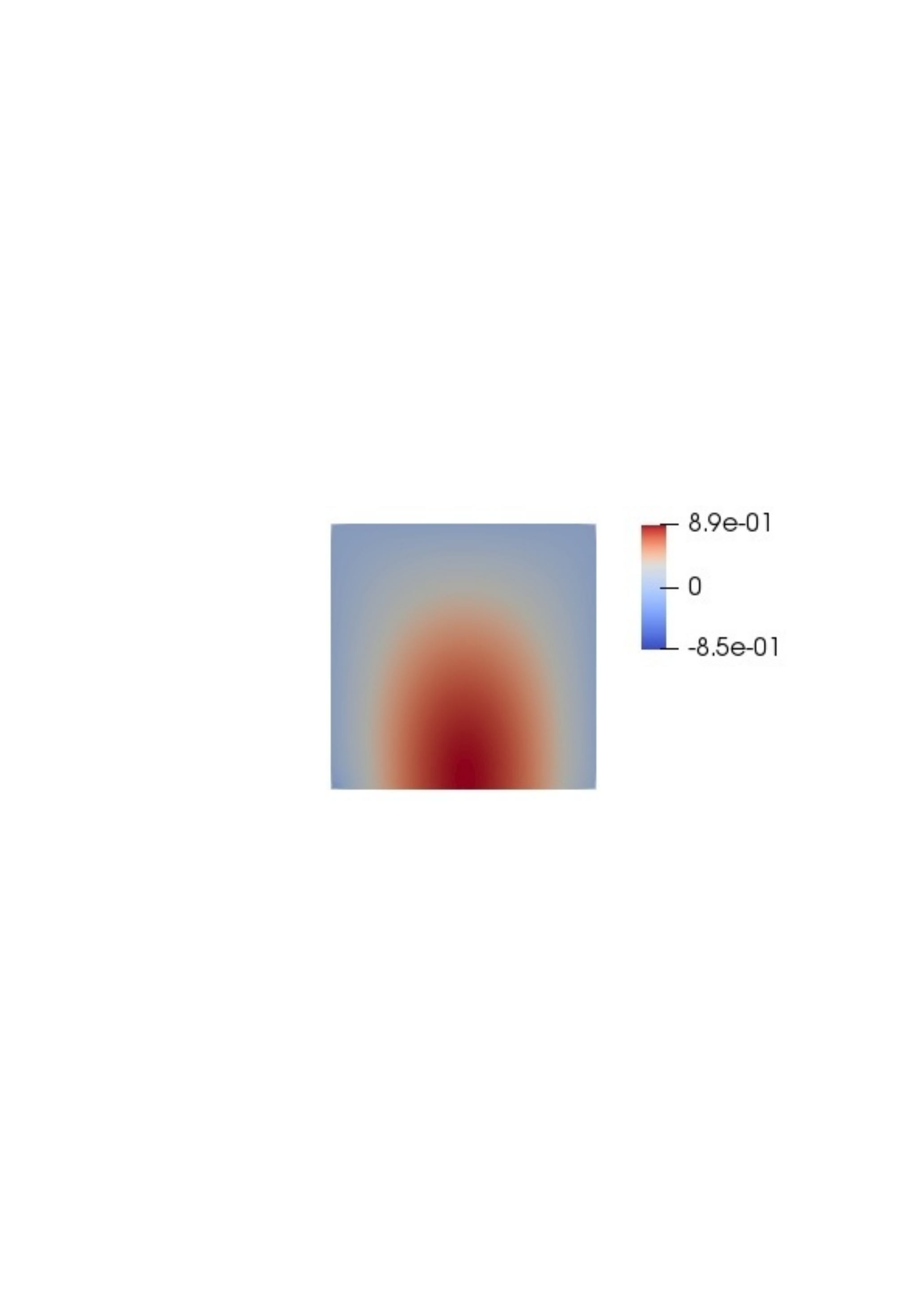}
\caption{Numerical solution of
	$p_{f}$ in\ $t=0.001$}\label{fig pF}
\end{minipage}
\end{figure}

\begin{figure}[H]
\begin{minipage}[t]{0.5\textwidth}
\centering
\includegraphics[width=8cm]{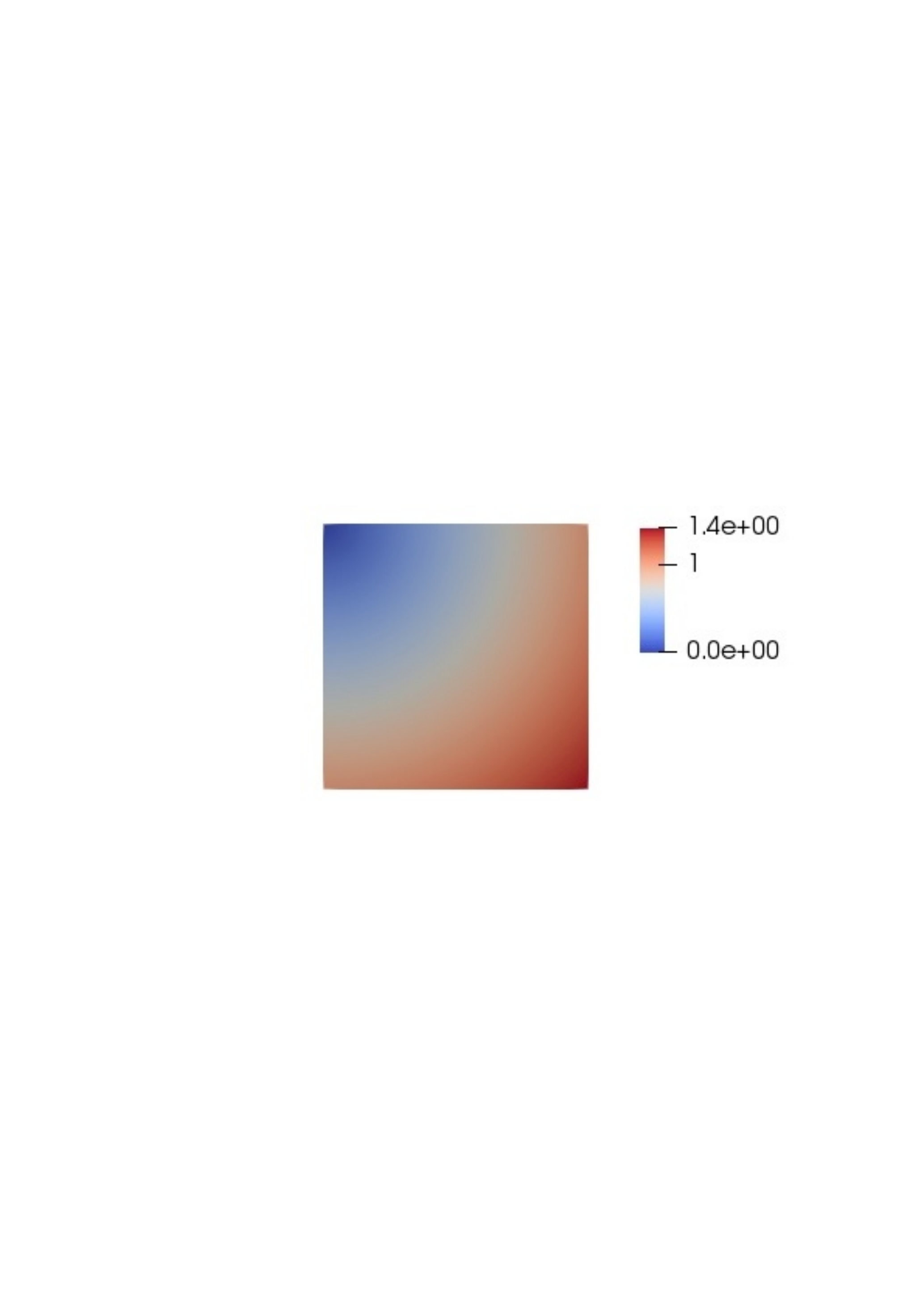}
\caption{The numerical solution of
 $\mathbf{U}$ in\ $t=0.001$}\label{fig U}
\end{minipage}
\begin{minipage}[t]{0.5\textwidth}
\centering
\includegraphics[width=8cm]{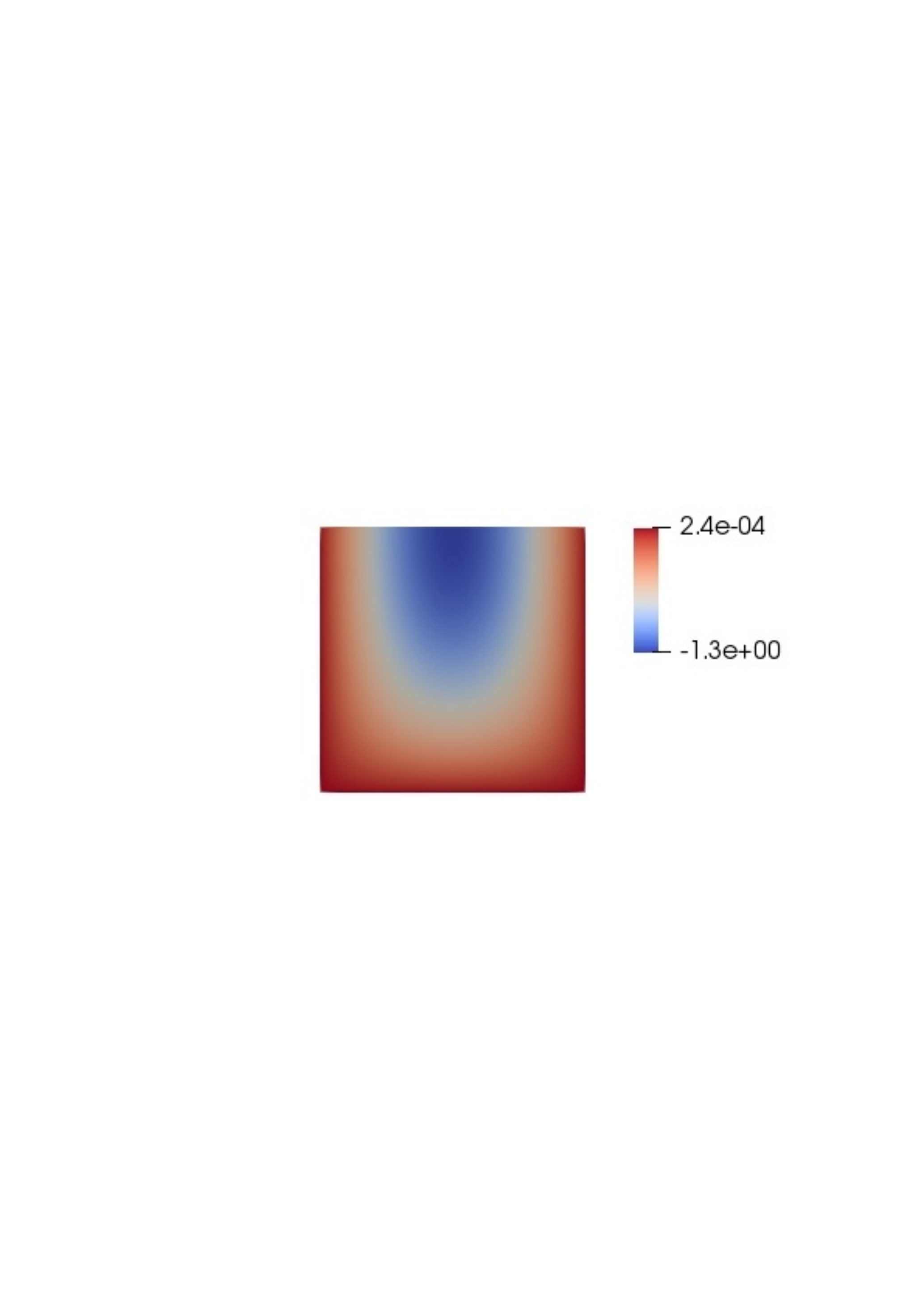}
\caption{The numerical solution of
 $p_{p}$ in\ $t=0.001$}\label{fig pP}
\end{minipage}
\end{figure}


From Figure \ref{fig v}- Figure \ref{fig pP}, we find out that our loosely-coupled time-stepping method  has good numerical stability and no "locking" phenomenon.

{\bf Test  2.}
In this example, we focus on fluid¨Cstructure interaction in the context of modeling the interaction between a stationary fracture filled with fluid and the surrounding poroelastic medium. We consider a general case where the hydraulic conductivity is a tensor given by $k=\frac{\mathbf{K}}{\mu_{f}}$, where $\mathbf{K}$ is the permeability tensor. The interface condition is the Beavers¨CJoseph¨CSaffman condition (\ref{1.15}) with the coefficient $
  \beta=\frac{\alpha\mu_{f}\sqrt{3}}{\sqrt{tr(\mathbf{K})}}$. The reference domain is a square $[-100m,100m]^{2}$. A fracture is positioned in the middle of the square, whose boundary is given by
\begin{equation*}
  \hat{y}^{2}=0.8^{2}(\hat{x}-35)(\hat{x}+35).
\end{equation*}
The fracture represents the reference fluid domain $\hat{\Omega}^{f}$, while the reference poroelastic structure domain is given as
$\hat{\Omega}^{p}=\hat{\Omega}\backslash\hat{\Omega}^{f}$, one can see Figure \ref{fig 3}. To obtain a more realistic domain, we transform the reference domain~$\hat{\Omega}$~onto the physical domain~$\Omega$~via the mapping
\begin{equation*}\label{7.2}
  \begin{pmatrix}x\\y\end{pmatrix}(\hat{x},\hat{y})=\begin{pmatrix}\hat{x}\\
  5\cos(\frac{\hat{x}+\hat{y}}{100})\cos(\frac{\pi\hat{x}+\hat{y}}{100})^{2}+\hat{y}/5-\hat{x}/10
  \end{pmatrix}.
\end{equation*}
\begin{figure}[H]
\centering
\includegraphics[width=14cm]{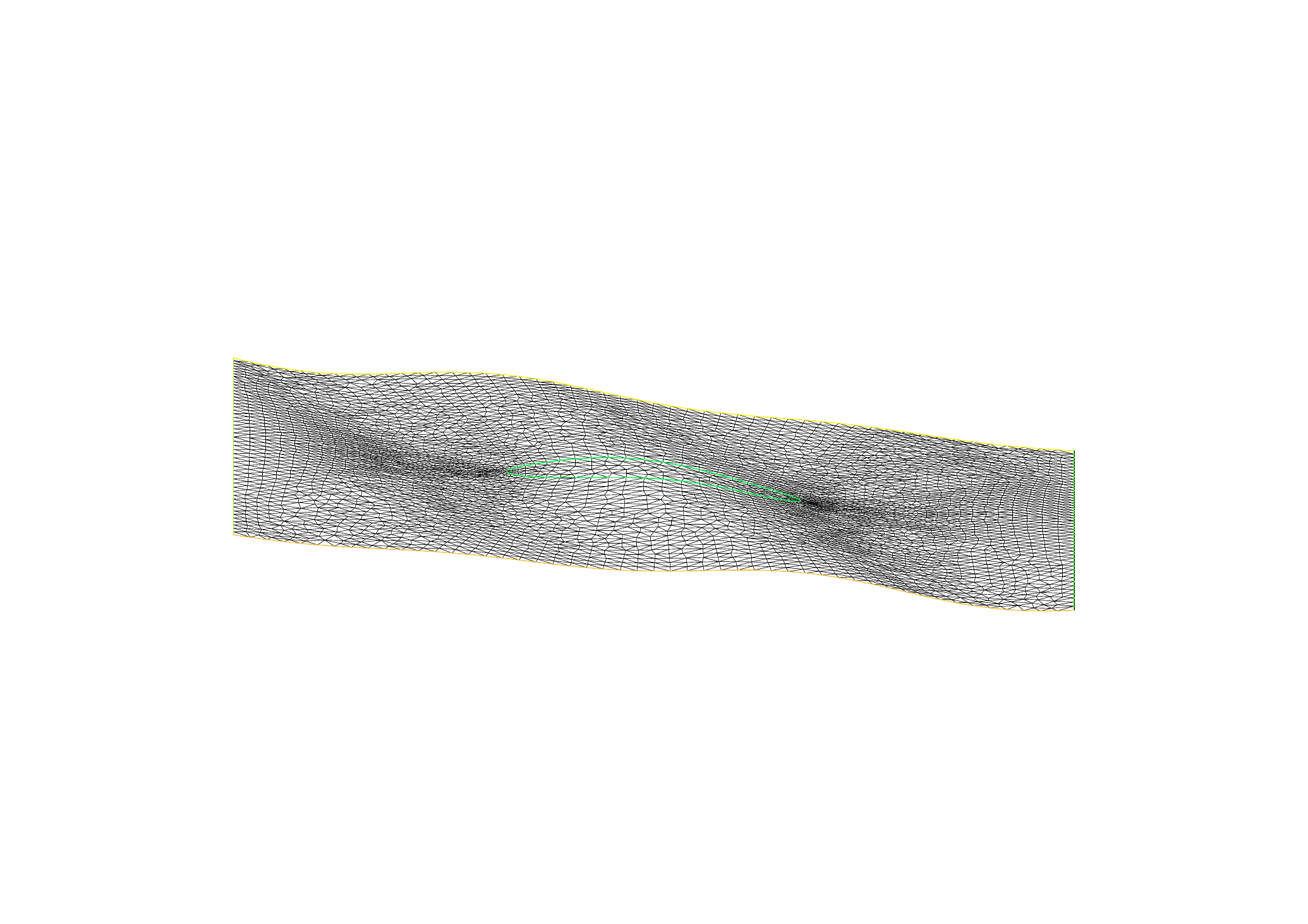}                  
\caption{
 Physical~computational~domain~$\Omega=\Omega^{f}\bigcup\Omega^{p}$}\label{fig 3}
\end{figure}

The flow is driven by the injection of the fluid into the fracture with the constant rate $g = 25 kg/s$. On all external boundaries, we prescribe
the no flow condition $\mathbf{q}\cdot\mathbf{n}=0$, zero normal displacement $\mathbf{U}\cdot\mathbf{n}=0$, and zero shear traction~$\tau\cdot\sigma_{p}\mathbf{n}=0$. The simulation time is $T=10s$. The problem was solved using the time step $\Delta t=0.1s$. The remaining parameters are given in
Table \ref{table3}. We adopt the $\mathbb{P}_1-\mathbb{P}_1$ element approximation for the fluid velocity and pressure, complemented with the pressure stabilization $s_{p}(p_{f,h},\psi_{f,h})$. For the relative velocity and pressure approximation of the fluid in the poroelastic medium, as for the structure
 displacement, we again use the $\mathbb{P}_1-\mathbb{P}_1$ finite elements. However, due to the large time-step in this
example, we are numerically closer to the divergence-free regime. Thus, we add two pseudo-pressure stabilization given
by
\begin{equation*}
  s_{q}(\xi_{h},w_{h}):=\gamma_{q}h^{2}k_{1}\int_{\Omega_{p}}\nabla \xi_{h}\cdot\nabla w_{h}d\mathbf{x},~~~
  s_{q}(\eta_{h},z_{h}):=\gamma_{q}h^{2}k_{2}\int_{\Omega_{p}}\nabla \eta_{h}\cdot\nabla z_{h}d\mathbf{x},
\end{equation*}
where the stabilization parameter is selected as $\gamma_{q}=10^{-3}$, and the choice of the penalty and stabilization parameters are $\gamma_{f}=1500,~\gamma_{stab}=1,~\gamma_{stab}'=0$.
\begin{table}[H]
  \caption{Values of parameters}\label{table3}
  \centering
  \begin{tabular}{ccc}
  \hline
  Parameters& Denotations & Values \\
  \hline
  Viscosity & $\mu_{f}$ & $10^{-3}$ \\
  Lem\'{e} constant & $\mu_{p}$ & $2.92\times10^{8}$ \\
  Lem\'{e} constant & $\lambda_{p}$ & $1.94\times10^{10}$\\
  Mass storage coefficient & $s_{0}$ & $6.9\times10^{-5}$\\
  Biot-Willis constant & $\alpha$ & 1\\
  Hydraulic conductivity & k & $1\times10^{-8}$\\
  Beavers¨CJoseph¨CSaffman coefficient  & $\beta$ & $3.47\times10^{3}$\\
  \hline
  \end{tabular}
\end{table}

\begin{figure}[H]
\begin{minipage}[t]{0.5\textwidth}	
\centering
\includegraphics[width=8cm]{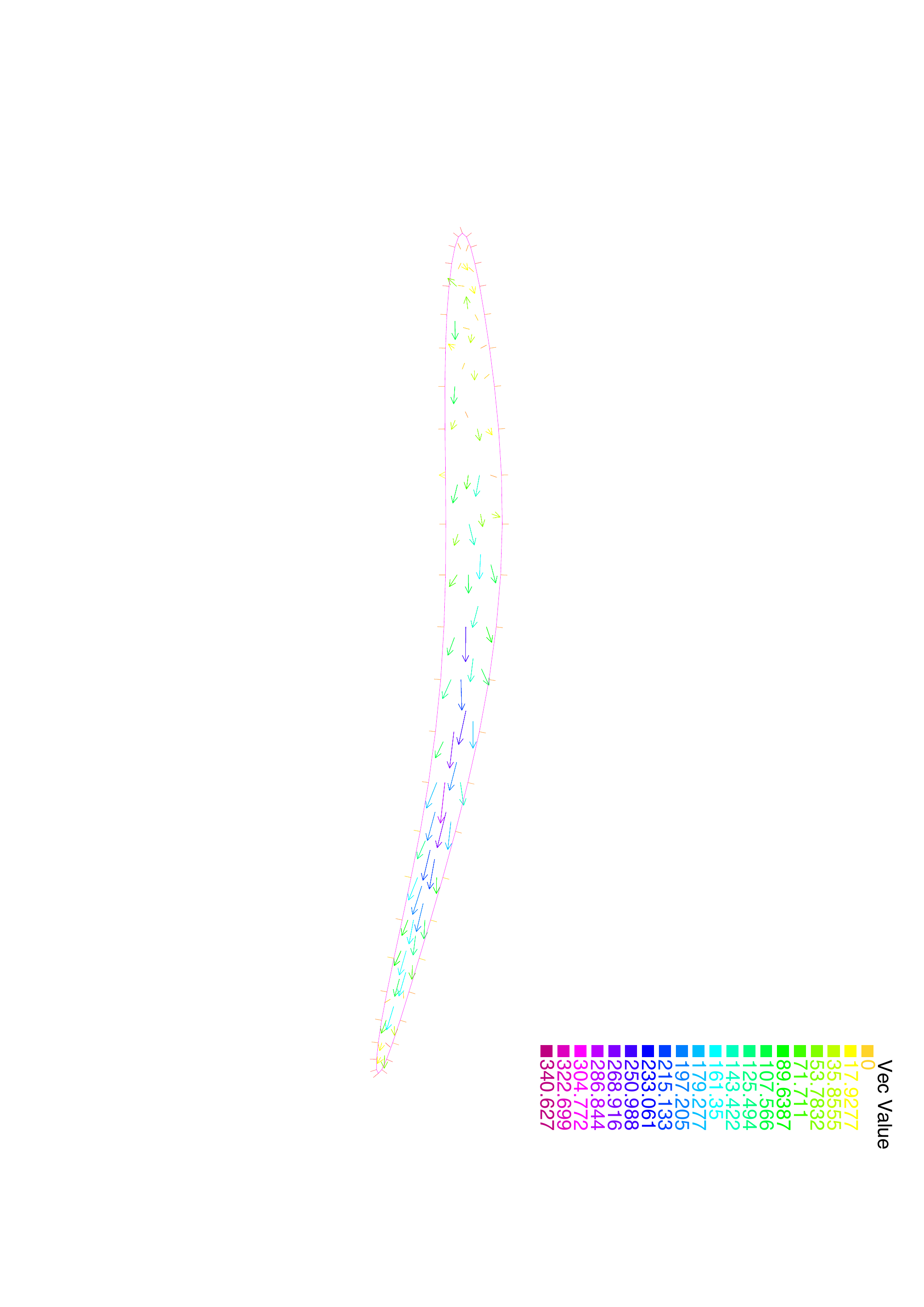}
\caption{
 $Fluid~velocity$}\label{fig 4}
\end{minipage}
\begin{minipage}[t]{0.5\textwidth}
\centering
\includegraphics[width=8cm]{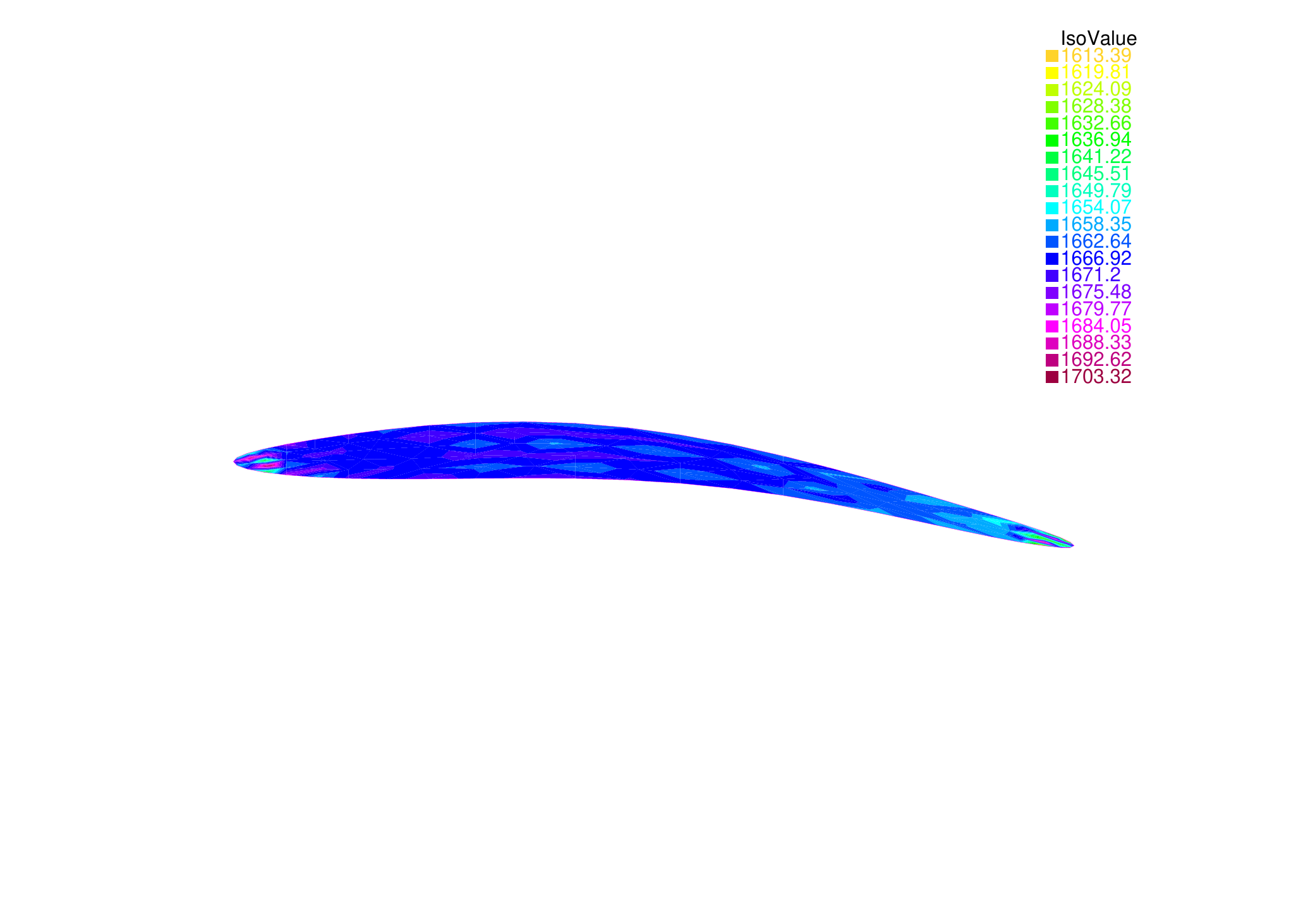}
\caption{
 $Fluid~pressure$}\label{fig 5}
\end{minipage}
\end{figure}

Figure \ref{fig 4} and Figure \ref{fig 5} show the pressure and the velocity in the fracture at final time. At the beginning of the process the
simulations capture the expected local pressure increase in the region of fluid injection, while at the final time, the fluid pressure is the largest at the tip of the crack. The pressure and displacement are shown in Figure \ref{fig 6} and Figure \ref{fig 7}.

\begin{figure}[H]
\begin{minipage}[t]{0.5\textwidth}	
\centering
\includegraphics[width=8cm]{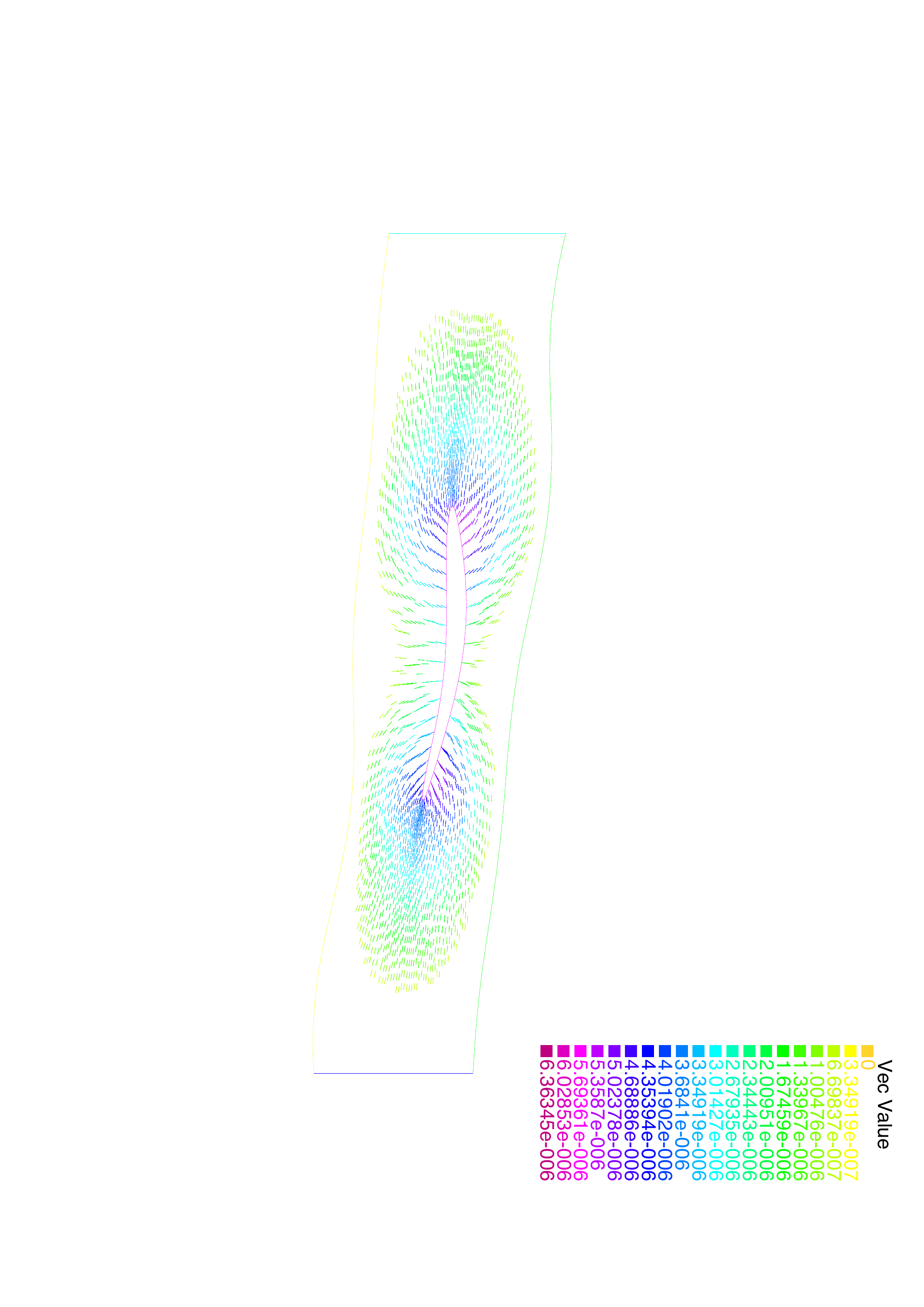}
\caption{
 Fluid~displacement~in~surrounding medium}\label{fig 6}
\end{minipage}
\begin{minipage}[t]{0.5\textwidth}	
\centering
\includegraphics[width=8cm]{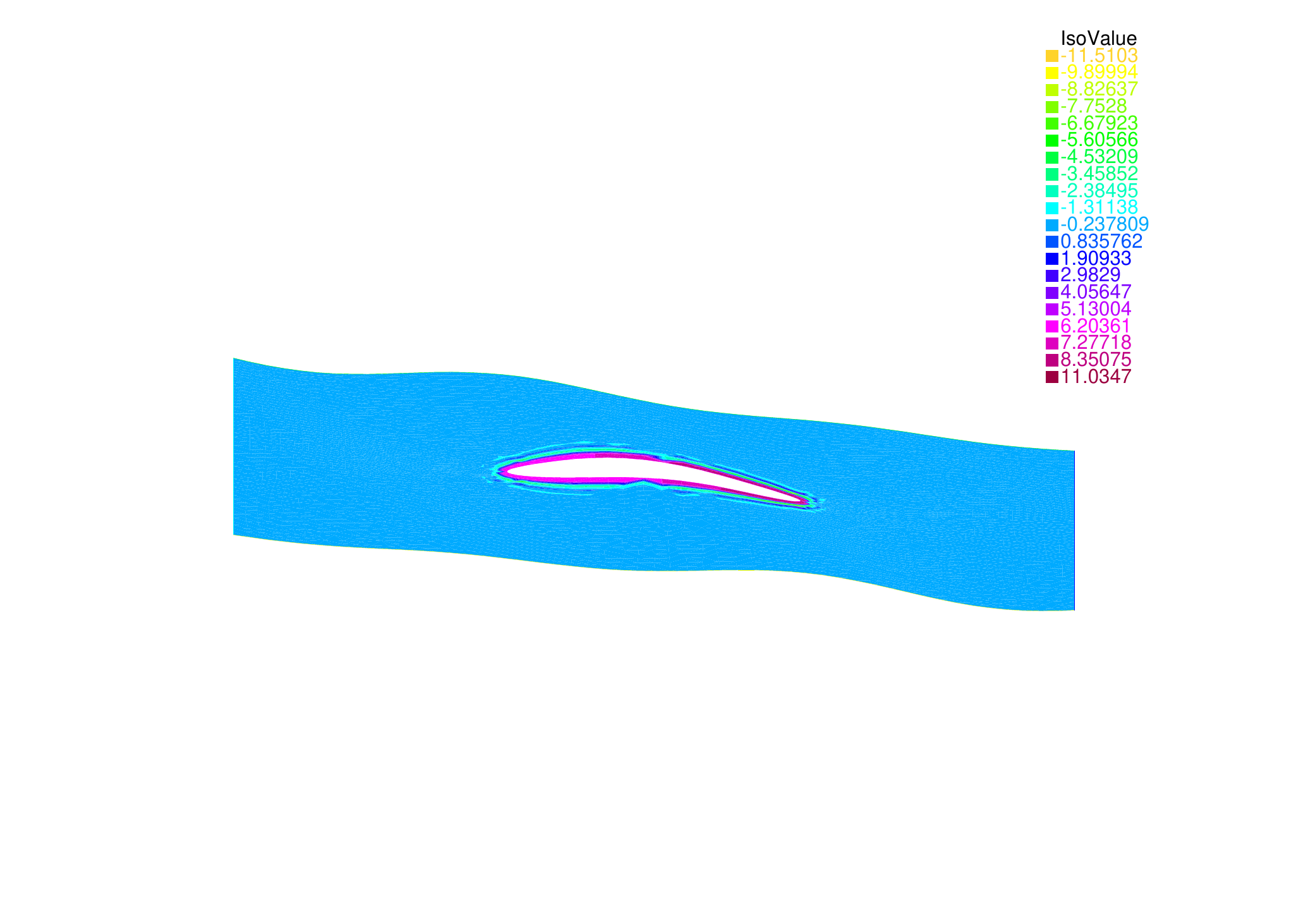}
\caption{
 Fluid~pressure~in~surrounding medium}\label{fig 7}
\end{minipage}
\end{figure}

This test demonstrates the ability of our algorithm to handle complex two-dimensional simulations in
different applications. Our model includes Darcy equations in the mixed formulation which
are necessary to compute accurately the Darcy velocity in the surrounding rock.

\section{Conclusions}
In this paper, we propose a multiphysics finite element method based on Nitsche's technique for Stokes-poroelasticity problem. To better describe the multiphysics process of deformation and diffusion for Stokes Stokes-poroelasticity problem, we first present a reformulation of the original problem by introducing two pseudo-pressures, which reveals the underlying deformation and diffusion multiphysics process. Then, we define the weak solution of the reformulated problem and prove the existence and uniqueness of weak solution of the original problem and the reformulated problem. We use Nitsche's technique to approximate the coupling condition at the interface and analyze the stability of the reformulated full-coupling problem. For the reformulated full-coupling problem, we propose a loosely-coupled time-stepping method, which decouples the problem into three sub-problems at each time step. And we prove the loosely-coupled time-stepping method with good stability and no "locking" phenomenon. Also, we give the error estimates of the loosely-coupled time-stepping method. A practical advantage of the time-stepping algorithm allows one to use any convergent Stokes solver together with any convergent diffusion equation solver to solve the Stokes-poroelasticity problem. From the point of calculation, this method is very effective and accurate. To the best of our knowledge, it is first time to use the multiphysics finite element method with Nitsche's technique to solve the Stokes-poroelasticity problem and give the error estimates.


\end{document}